\documentclass[sn-mathphys,Numbered]{sn-jnl} 

\usepackage{mathtools}
\usepackage{cite}
\usepackage{anyfontsize}
\usepackage{graphicx} 
\usepackage{multirow} 
\usepackage{amsmath,amssymb,amsfonts} 
\usepackage{amsthm} 
\usepackage{mathrsfs} 
\usepackage[title]{appendix} 
\usepackage{xcolor} 
\usepackage{float}
\usepackage{textcomp} 
\usepackage{manyfoot} 
\usepackage{booktabs} 
\usepackage{algorithm} 
\usepackage{algorithmicx} 
\usepackage{algpseudocode} 
\usepackage{listings}
\usepackage{array}
\usepackage{caption}
\usepackage{subcaption}
\usepackage[utf8]{inputenc}
\usepackage{booktabs}
\usepackage{amsfonts}
\usepackage{siunitx}
\usepackage{longtable}
\usepackage{eucal}
\usepackage{multirow}
\usepackage{enumerate}
\usepackage{lineno}
\usepackage{framed}
\usepackage{placeins}
\usepackage{cleveref} 
\newtheorem{theorem}{Theorem}[section]
\newtheorem{proposition}{Proposition}[section] 
\newtheorem{lemma}{Lemma}[section]

\newtheorem{remark}{Remark}[section] 
 
\newtheorem{definition}{Definition}[section]

\DeclareMathOperator{\argmin}{\textnormal{argmin}}
\begin{document}

\title[Article Title]{Newton Method for Multiobjective Optimization Problems of Interval-Valued Maps}

\author[1]{\fnm{Tapas} \sur{Mondal}}\email{tapas.ra.mat24@itbhu.ac.in}
\author[1]{\fnm{Debdas} \sur{Ghosh}}\email{debdas.mat@iitbhu.ac.in}
\author*[2]{\fnm{Do Sang} \sur{ Kim}}\email{dskim@pknu.ac.kr}

\affil[1]{\orgdiv{Department of Mathematical Sciences}, \orgname{Indian Institute of Technology (BHU)}, \orgaddress{ \city{Varanasi}, \postcode{221005}, \state{Uttar Pradesh}, \country{India}}}

\affil[2]{\orgdiv{Department of Applied Mathematics}, \orgname{Pukyong National University}, \orgaddress{ \city{Busan}, \postcode{48513}, \country{Korea}}}

\abstract{In this article, we propose a Newton-based method for solving {\it multiobjective interval optimization problems} (MIOPs). We first provide a connection between weakly Pareto optimal points and Pareto critical points in the context of MIOPs. Introducing this relationship, we develop an algorithm aimed at computing a Pareto critical point. The algorithm incorporates the computation of a descent direction at a non-Pareto critical point and employs an Armijo-like line search strategy to ensure sufficient decrease. Under suitable assumptions, we prove that the sequence generated by our proposed algorithm converges to a Pareto critical point. The effectiveness and performance of the proposed method are demonstrated through a series of numerical experiments on some test problems. Finally, we apply our proposed algorithm in a portfolio optimization problem with interval uncertainty.}

\keywords{Multiobjective optimization, Interval optimization, Newton method, Pareto critical points}


\maketitle
\section{Introduction}\label{Introduction}
In many real-world optimization scenarios, decision-making involves addressing multiple, often conflicting objectives rather than optimizing a single criterion. Such problems are known as multiobjective optimization problems. Unlike single-objective optimization, which typically yields a unique optimal solution, multiobjective optimization problems result in a set of trade-off solutions referred to as Pareto optimal solutions, where the improvement of one objective necessitates the deterioration of at least one other.

Moreover, uncertainty and imprecision—arising from limitations in measurement, estimation, or modeling—are inherent in various practical applications. These uncertainties can be effectively captured using interval-valued data, leading to the formulation of multiobjective optimization problems governed by {\it interval-valued mappings} (IVMs). In such problems, each objective function yields an interval rather than a precise scalar value, thereby requiring generalized notions of differentiability, optimality, and partial ordering for vectors with interval components.

The main objective of this article is to develop a Newton-type method for computing a Pareto critical point for MIOPs. In addition, we aim to establish the convergence of the sequence generated by the proposed algorithm to a Pareto critical point. Before proceeding, we provide a brief literature survey below.
\subsection{Literature Survey}
Over the years, numerous parameter-based and ordering-based approaches such as weighted sum, $\epsilon$-constraint, compromise programming, normal boundary intersection, normal constraint, physical programming, ideal cone, etc have been developed to determine the Pareto set in multiobjective optimization problems (for instance, see \cite{ehrgott2005multicriteria,miettinen1999nonlinear,ghosh2014directed}). However, these parameter-based and ordering-based methods have some disadvantages. The main disadvantage of the parameter-based method is that the choice of the parameters is not known in advance, and they depend on outside the problem data. On the other hand, giving the ordering importance to the objective functions is a burden task to the decision-maker. To overcome these drawbacks, some parameter-free as well as ordering-free optimization methods have been developed after the seminal work of Fliege and Svaiter \cite{fliege2000steepest}. These include Newton \cite{fliege2009newton}, quasi-Newton \cite{polvaj2014quasi}, conjugate gradient \cite{gonclaves2020extension}, projected gradient \cite{drummond2004projected,fukuda2013ineaxct}, trust region \cite{mohammadi2024trust}, conditional gradient \cite{assuncao2021conditional}, etc.  

 In 2009, Fliege et al. \cite{fliege2000steepest} first introduced the Newton method for multicriteria optimization problems to find the Pareto critical point. The authors solved a strongly convex subproblem to find the descent direction at a non-Pareto critical point and further used the Armijo-like rule to compute the step length. In addition, authors proved the convergence of the sequence generated by their proposed algorithm to the Pareto critical point. The authors proved the superlinear convergence result for twice continuously differentiable and locally strongly convex objective functions. In addition, the authors derived the quadratic rate of convergence with the assumption of Lipschitz continuity of the Hessian of objective functions. Further, Drummond et al. \cite{drummond2014quadratically} proposed a Newton-type method for smooth unconstrained vector optimization problems under general partial orders induced by closed convex cones, establishing global superlinear and local quadratic convergence under standard assumptions. In 2012, Wang and Liu \cite{wang2012regularized} proposed a regularized Newton method for convex multiobjective optimization problems, extending the Newton-type approach in \cite{fliege2009newton} by eliminating the requirement of strong convexity and ensuring convergence to weak Pareto-optimal points. In 2014, Fukuda and Drummond \cite{fukuda2014survey} analyzed in detail the steepest descent, the projected gradient, and the Newton methods through a rigorous and comprehensive survey. Lu and Chen \cite{lu2014newton} developed Newton-like methods for solving vector optimization problems, focusing on descent-type approaches that avoid scalarization and instead operate directly with vector-valued functions, particularly under convexity assumptions.
Wang et al. \cite{wang2019extended} developed extended Newton methods for multiobjective optimization problems by employing the majorizing function technique, establishing new semilocal, local, and global quadratic convergence results under the $L$-average Lipschitz condition, thereby significantly improving results in \cite{fliege2009newton}. However, the Majorization technique in \cite{wang2019extended} is applicable for convex function. Thus, in 2022, Gon\c{c}alves \cite{gonclaves2022globally} developed two Newton-based methods for multiobjective optimization problems that are globally convergent and applicable to nonconvex problems.  Recently, Ghosh \cite{ghosh2025cubic} proposed a cubic regularization of the Newton method to generate a set of weakly Pareto optimal points without convexity assumption on the objective function.

It is observed that traditional multiobjective optimization methods rely on deterministic data, assuming that all parameters are precisely known. However, real-world problems often involve uncertainties due to factors such as measurement errors, incomplete information, and system variability. Interval optimization provides an uncertain framework to model such uncertainties by representing uncertain parameters as intervals rather than exact values. To address uncertainties, interval optimization has gained attention in recent years. Studies by Moore \cite{moore1966interval} introduced foundational concepts in interval analysis, which have since been extended to multiobjective optimization problems. Nowadays, various techniques have been developed for solving interval optimization problems. Bhurjee and Panda \cite{bhurjee2012efficient} developed a parametric representation of interval-valued functions and proposed a methodology to solve general interval optimization problems, including convex quadratic programming, by converting them into equivalent deterministic optimization problems. Their approach ensures the existence and efficiency of solutions under appropriate convexity and order relations. Subsequently, Bhurjee and Panda \cite{bhurjee2016sufficient} introduced sufficient optimality conditions and duality theory for interval optimization problems. In 2017, Ghosh introduced the Newton \cite{ghosh2017newton} and quasi-Newton \cite{ghosh2017quasi} methods for capturing efficient solutions of an interval optimization problem. Further, Ghosh et al. \cite{ghosh2022generalized} developed a gradient-based algorithm to find the efficient solutions of an interval optimization problem. In 2024, Roy et al. \cite{roy2024gradient} proposed a gradient-based descent line search method under generalized Hukuhara differentiability for solving interval optimization problems and applied the method to finance.

 MIOPs are concerned with optimizing multiple objective functions simultaneously, where these functions are considered to be interval-valued rather than real-valued. MIOPs arise in various fields, including engineering, finance, logistics, and management, for instance, see \cite{upadhayay2024newton,giove2006interval,maity2019time}. To solve such MIOPs, Upadhyay et al. \cite{upadhayay2024newton} introduced Newton method and applied it to portfolio optimization. Subsequently, Upadhyay et al. \cite{upadhayay2024quasi} developed quasi-Newton method for MIOPs. Authors transformed an MIOP into a real-valued multiobjective optimization problem considering each objective function as a sum of lower and upper boundary functions of each IVM and further applied the Newton and quasi-Newton methods for transformed real-valued multiobjective optimization problem, which is a trivial case. Mondal and Ghosh \cite{mondal2025steepest} introduced the steepest descent method to find a Pareto critical point for an MIOP. The authors solved a strongly convex quadratic subproblem to compute a descent direction at a non-Pareto critical point. Further, introducing the steepest descent algorithm for an MIOP, they investigated the convergence properties of their proposed algorithm.

\subsection{Motivation and Work Done}

From the existing literature, we see that multiobjective optimization has been extensively studied in deterministic settings. However, these methods often assume precise parameter values, which may not hold in practical scenarios.  In the existing literature, there are steepest descent \cite{mondal2025steepest}, Newton \cite{upadhayay2024newton}, and quasi-Newton \cite{upadhayay2024quasi} methods for MIOPs. However, in \cite{upadhayay2024newton,upadhayay2024quasi}, authors transformed an MIOP into a real-valued multiobjective optimization problem considering each objective function as a sum of the lower and upper boundary functions of each IVM and further applied the Newton and quasi-Newton methods for real-valued multiobjective optimization, which essentially captures very tiny part of the entire set of efficient solutions. One can trivially find that almost entire part of the efficient solutions cannot be captured by the methods in \cite{upadhayay2024newton,upadhayay2024quasi}. Although in steepest descent method \cite{mondal2025steepest} for an MIOP is able to capture the almost every Pareto optimal points, it used only the generalized Hukuhara gradient information. Thus, we make an attempt to study the Newton method for MIOPs to capture almost entire part of the efficient solutions using generalized Hukuhara gradient and generalized Hukuhara Hessian information. In this study, our main contributions are as follows.
\begin{enumerate}
	\item[(i)] We propose an algorithm for the Newton method to find a Pareto critical point of an MIOP. In the proposed algorithm, we give the computation of a descent direction at a non-Pareto critical point. We prove the result related to the descent direction calculation. Further, we use the Armijo-like rule to find the step length. In addition, we prove the existence of the step length.
	\item[(ii)] We show that the iteration scheme of our proposed algorithm is scaling independent of variable.
	\item[(iii)] We prove that under certain reasonable assumption, the sequence generated by our proposed algorithm converges to a Pareto critical point.
	\item[(iv)] We provide computational experiments through some test problems to validate our proposed algorithm.
	\item[(v)] We give a portfolio optimization problem in a an MIOP framework and apply our proposed algorithm. 
\end{enumerate}
\subsection{Delineation}
The remainder of the paper is structured as follows. In Section \ref{Preliminaries}, preliminaries on interval analysis and basic definitions and results on MIOP are given. In Section \ref{Newton method}, the Newton method for an MIOP is developed. In Section \ref{Convergence analysis}, the convergence analysis is provided. In Section \ref{Numerical experiments}, the numerical performance of the proposed algorithm is shown through some test problems. In addition, we give an application of our proposed method in a portfolio optimization problem in Section \ref{Application}. Finally, the study is concluded in Section \ref{Conclusion and future directions}, where some further research scopes are also given.

\section{Preliminaries}\label{Preliminaries}
In this section, we give basic ideas of interval analysis. In addition, we give some definitions and basic results on MIOP. Throughout, we use the notations ${\mathbb{R}}$, ${\mathbb{R}_+}$, and ${\mathbb{N}}$ to mean the set of real numbers, the set of nonnegative real numbers, and the set of natural numbers, respectively. In addition, we denote the zero interval by ${\bf 0}$, i.e., ${\bf 0}=[0,0]$.

\subsection{Interval Analysis}\label{interval analysis}
Let ${\it I}({\mathbb{R}})$ be the collection of all closed and bounded intervals. Consider a real number $\lambda$ and a pair of intervals $ S:=\left[\underline{s},\overline{s}\right]$ and $ T:=\left[\underline{t},\overline{t}\right]$. Then, as per Moore {\normalfont\cite{moore1966interval}}, interval addition, subtraction, multiplication, and scalar multiplication are written by $ S\oplus T$, $ S\ominus T$, $ S\odot T$, and $\lambda\odot S$, respectively, which are defined as follows:
\begin{enumerate}
	\item[(i)] $ S\oplus T:=\left[\underline{s}+\underline{t},\overline{s}+\overline{t}\right]$;
	\item[(ii)] $ S\ominus T:=\left[\underline{s}-\overline{t},\overline{s}-\underline{t}\right]$;
	\item[(iii)] $ S\odot T:=\left[\min\:\left\{\underline{s}\:\underline{t},\underline{s}\:\overline{t},\overline{s}\:\underline{t},\overline{s}\:\overline{t}\right\},\max\:\left\{\underline{s}\:\underline{t},\underline{s}\:\overline{t},\overline{s}\:\underline{t},\overline{s}\:\overline{t}\right\}\right]$;
	\item[(iv)] 
	$\lambda\odot S: = \begin{cases}
		\left[\lambda\underline{s},\lambda\overline{s}\right]  & \text{if } \lambda \geq 0, \\
		\left[\lambda\overline{s},\lambda\underline{s}\right]  & \text{if } \lambda<0. 
	\end{cases} $
\end{enumerate}

\medskip
\begin{definition}[$gH$-difference {\normalfont\cite{stefanini2008generalization}}]\label{gH difference definition}
	\normalfont
	Let $ S$, $ T$, and $ V$ be three elements of ${\it I}({\mathbb{R}})$. If $ S:= T\oplus  V$ or $ T:=S\ominus  V$, then $ V$ is called the $gH$-difference between $ S$ and $ T$. We write it as $ V:= S\ominus_{gH} T$. For a given pair of intervals $ S:=\left[\underline{s},\overline{s}\right]$ and $ T:=\left[\underline{t},\overline{t}\right]$, the $gH$-difference is calculated by \[ S\ominus_{gH}T:=\left[\min\:\left\{\underline{s}-\underline{t},\overline{s}-\overline{t}\right\},\max\:\left\{\underline{s}-\underline{t},\overline{s}-\overline{t}\right\}\right].\] 
\end{definition}

\medskip
The following presents a dominance relation for intervals in the context of a minimization problem---smaller is better. 
\medskip

\begin{definition}[Dominance relation of intervals {\normalfont\cite{chauhan2021generalized}}]\label{Dominance definition}
	\normalfont
	Consider a pair of elements $ S:=\left[\underline{s},\overline{s}\right]$ and $ T:=\left[\underline{t},\overline{t}\right]$ from $ {\it I}({\mathbb{R}})$.
	\begin{enumerate}
		\item[(i)]  If $\underline{s}\geq\underline{t}$ and $\overline{s}\geq\overline{t}$, then we say that $ T$ dominates $ S$, and represent it by $ S\succeq  T$.
		\item[(ii)] If either $\left(\underline{s}>\underline{t} \text{ and }\overline{s}\geq\overline{t}\right)$ or $\left(\underline{s}\geq\underline{t} \text{ and }\overline{s}>\overline{t}\right)$, then we say that $ T$ strictly dominates $ S$, and express it by $ S\succ  T$.
		\item[(iii)] If $ T$ does not dominate $ S$, then we present it by $ S\nsucceq  T$.
		\item[(iv)] If $ T$ does not strictly dominate $ S$, then we denote it by $S\nsucc  T$. 
		\item[(v)] If $ S$ dominates $T$ or $ T$ dominates $ S$, then we say that $ S$ and $ T$ are comparable.
		\item[(vi)] If $ S$ does not dominate $ T$ and $ T$ does not dominate $ S$, then we say that $ S$ and $T$ are not comparable.
	\end{enumerate}
\end{definition}

\medskip
The dominance $ S\succeq  T$ is also interchangeably presented by $ T\preceq S$. Similarly, $S\succ T$, $ S\nsucceq  T$, and $ S\nsucc  T$ are alternatively presented by $ T\prec S$, $ T\npreceq  S$, and $ T\nprec  S$, respectively.

\medskip
\begin{definition}[Norm on ${{\it I}({\mathbb{R}})}$ {\normalfont\cite{moore1966interval}}]\label{Norm on IR definition}
	\normalfont
	The norm on ${{\it I}({\mathbb{R}})}$ is a function $\|\cdot\|_{{{\it I}({\mathbb{R}})}}:{{\it I}({\mathbb{R}})}\to{\mathbb{R}}_+$ given by \[\|S\|_{{\it I}({\mathbb{R}})}:=\max\:\left\{|\underline{s}|,|\overline{s}|\right\},S:=\left[\underline{s},\overline{s}\right]\in {{\it I}({\mathbb{R}})}.\]
\end{definition}

\medskip
\begin{definition}[Norm on ${{\it I}({\mathbb{R}})^n}$ {\normalfont\cite{ghosh2022generalized}}]\label{Norm on IRn definition}
	\normalfont
	The norm on ${{\it I}({\mathbb{R}})^n}$ is a function $\|\cdot\|_{{{\it I}({\mathbb{R}})^n}}:{{\it I}({\mathbb{R}})^n}\to{\mathbb{R}}_+$ given by \[\| {\tilde S}\|_{{\it I}({\mathbb{R}})^n}:=\left\| S_1\right\|_{{\it I}\left({\mathbb{R}}\right)}+\left\| S_2\right\|_{{\it I}\left({\mathbb{R}}\right)}+\cdots+\left\| S_n\right\|_{{\it I}\left({\mathbb{R}}\right)},   {\tilde{S}}:=\left( S_1, S_2,\ldots,  S_n\right)^\top\in {\it I}({\mathbb{R}})^n.\] 
\end{definition}

\medskip
Throughout the article, the notation $\|\cdot\|$ represents the usual Euclidean norm in ${\mathbb{R}}^n$. Let $ H:{\mathbb{R}}^n\to {\it I}({\mathbb{R}})$ be an IVM, which is represented by $H:=\left[\underline{H},\overline{H}\right]$, where $\underline{H}:{\mathbb{R}}^n\to{\mathbb{R}}$ and $\overline{H}:{\mathbb{R}}^n\to{\mathbb{R}}$ are two real-valued functions with $\underline{H}\left(x\right)\leq \overline{H}\left(x\right)$ for all $x\in{\mathbb{R}}^n$. The functions $\underline{H}$ and $\overline{H}$ are called lower and upper boundary functions of the IVM $H$, respectively.

\medskip
\begin{definition}[$gH$-continuity {\normalfont\cite{ghosh2017newton}}]\label{gH continuity of IVM definition}
	\normalfont
		An IVM $H:{\mathbb{R}}^n\rightarrow {\it I}({\mathbb{R}})$ is said to be $gH$-continuous at $ \tilde x$ if \[\underset{\| p\|\to 0}{\lim}\left( H( \tilde x+  p)\ominus_{gH} H( \tilde x)\right)={\bf 0}.\]
\end{definition}

\medskip
	\begin{definition}[$gH$-Lipschitz continuity {\normalfont\cite{ghosh2022generalized}}]\label{gH Lipschitz continuity of IVM definition}
	\normalfont
	An IVM $H:{\mathbb{R}}^n\rightarrow {\it I}({\mathbb{R}})$ is said to be $gH$-Lipschitz continuous if there exists an $L>0$ such that \[\|H(x)\ominus_{gH}H(y)\|_{{{\it I}({\mathbb{R}})}}\leq L\|x-y\| \text{ for all }x,y \in{\mathbb{R}}^n.\]
\end{definition}

\medskip
The interrelation between $gH$-continuity ($gH$-Lipschitz continuity) of an IVM and continuity (Lipschitz continuity) of its boundary functions are given in the following lemma.

\medskip
\begin{lemma}\label{convexity, continuity with boundary functions}
	Let $H:{\mathbb{R}}^n\rightarrow {\it I}({\mathbb{R}})$ be an IVM given by $H:=[\underline{H},\overline{H}]$. Then, the following results hold. 
	\begin{enumerate}
		\item[(i)] Both of boundary functions $\underline{H}$ and $\overline{H}$ are continuous at ${ \bar x}$ if and only if the IVM $H$ is $gH$-continuous at ${ \bar x}$ (see {\normalfont\cite{ghosh2017newton}}).
		\item[(ii)] Both of boundary functions $\underline{H}$ and $\overline{H}$ are Lipschitz continuous if and only if the IVM $H$ is $gH$-Lipschitz continuous (see {\normalfont\cite{ghosh2022generalized}}).
	\end{enumerate}
\end{lemma}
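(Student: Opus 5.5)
The whole argument rests on one elementary identity, which I will establish first and then apply to both parts. Fix $x,y\in\mathbb{R}^n$, write $a:=\underline{H}(x)-\underline{H}(y)$ and $b:=\overline{H}(x)-\overline{H}(y)$. By Definition \ref{gH difference definition}, $H(x)\ominus_{gH}H(y)=[\min\{a,b\},\max\{a,b\}]$. By Definition \ref{Norm on IR definition},
\[
\|H(x)\ominus_{gH}H(y)\|_{{\it I}(\mathbb{R})}=\max\{|\min\{a,b\}|,|\max\{a,b\}|\}=\max\{|a|,|b|\}.
\]
So the norm of the $gH$-difference equals the larger of the absolute increments of the two boundary functions. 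Both parts reduce to this.

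For part (i), I will take $x=\bar x+p$ and $y=\bar x$. The limit in Definition \ref{gH continuity of IVM definition} is understood with respect to $\|\cdot\|_{{\it I}(\mathbb{R})}$. Hence $gH$-continuity at $\bar x$ is equivalent to
\[
\max\{|\underline{H}(\bar x+p)-\underline{H}(\bar x)|,\ |\overline{H}(\bar x+p)-\overline{H}(\bar x)|\}\to 0 \quad\text{as } \|p\|\to 0.
\]
Since each term is bounded by the maximum, and the maximum is bounded by their sum, this holds if and only if each boundary increment tends to $0$. That is exactly continuity of $\underline{H}$ and $\overline{H}$ at $\bar x$.

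For part (ii), suppose first that $H$ is $gH$-Lipschitz with constant $L$. The identity gives $|\underline{H}(x)-\underline{H}(y)|\le L\|x-y\|$, and likewise for $\overline{H}$. Conversely, if $\underline{H}$ and $\overline{H}$ are Lipschitz with constants $L_1$ and $L_2$, the identity yields the $gH$-Lipschitz bound with $L=\max\{L_1,L_2\}$.

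The only point needing care is the identity $\max\{|\min\{a,b\}|,|\max\{a,b\}|\}=\max\{|a|,|b|\}$. It holds because $\{\min\{a,b\},\max\{a,b\}\}=\{a,b\}$ as sets. Once that is in place, no step presents a real obstacle.
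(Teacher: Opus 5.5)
Your argument is correct and complete: the identity $\|H(x)\ominus_{gH}H(y)\|_{{\it I}(\mathbb{R})}=\max\{|\underline{H}(x)-\underline{H}(y)|,|\overline{H}(x)-\overline{H}(y)|\}$ reduces both parts to elementary statements about the two boundary functions. The paper gives no proof of its own and only cites \cite{ghosh2017newton} and \cite{ghosh2022generalized}, for which this endpoint computation is the standard justification. The one nit is that Definition \ref{gH Lipschitz continuity of IVM definition} requires $L>0$, so in the converse of (ii) take, for example, $L=\max\{L_1,L_2\}+1$ to cover the case where both boundary functions are constant.
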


\medskip
The following are fundamental concepts and important to proceed further.
\medskip
\begin{definition}[$gH$-derivative {\normalfont\cite{debnath2022generalized}}]\label{gH derivative of IVM definition}
	\normalfont
	Let $\mathcal{X}\subseteq \mathbb{R}$ be an open set and $H:\mathcal{X}\to{\it I}\left(\mathbb{R}\right)$ be an IVM. If $H'(\tilde{x}):=\underset{ p\to 0}{\lim}\tfrac{1}{p}\odot\left( H( \tilde x+  p)\ominus_{gH} H( \tilde x)\right)$ exists, then it is called $gH$-derivative of $H$ at $\tilde{x}\in\mathcal{X}$. 
\end{definition}

\medskip
\begin{remark}
	\normalfont
	Let $\mathcal{X}\subseteq \mathbb{R}$ be an open set and $H:\mathcal{X}\to{\it I}\left(\mathbb{R}\right)$ be an IVM given by $H:=\left[\underline{H},\overline{H}\right]$. If the derivatives of $\underline{H}\text{ and }\overline{H}$ at $\tilde{x}\in\mathcal{X}$ exist, then $gH$-derivative of $H$ exists. Moreover, if the derivatives of $\underline{H}\text{ and }\overline{H}$ are $\underline{H}'\text{ and }\overline{H}'$, respectively, then \[H'(\tilde{x}):=\left[\min\left\{\underline{H}'(\tilde{x}),\overline{H}'(\tilde{x})\right\},\max\left\{\underline{H}'(\tilde{x}),\overline{H}'(\tilde{x})\right\}\right].\]
\end{remark}

\medskip
\begin{definition}[$gH$-partial derivative {\normalfont\cite{debnath2022generalized}}]\label{gH partial derivative of IVM definition}
	\normalfont
	Let $\mathcal{X}\subseteq {\mathbb{R}}^n$ be an open set and $H:\mathcal{X}\to{\it I}\left(\mathbb{R}\right)$ be an IVM. We define a function $H_i(x_i):=H(\tilde{x}_1,\tilde{x}_2,\ldots,\tilde{x}_{i-1},x_i,\tilde{x}_{i+1},\ldots,\tilde{x}_n)$, where $\tilde{x}:=(\tilde{x}_1,\tilde{x}_2,\ldots,\tilde{x}_n)^\top$. If $H_i'$ exists at $\tilde{x}_i$, then $i$-th $gH$-partial derivative of $H$ at $\tilde{x}$, denoted by $D_iH(\tilde{x})$, is defined by \[D_iH(\tilde{x}):=H_i'(\tilde{x}_i)\text{ for all }i=1,2,\ldots,n.\]
\end{definition}

\medskip
\begin{definition}[$gH$-gradient {\normalfont\cite{debnath2022generalized}}]\label{gH gradient of IVM definition}
	\normalfont
	Let $\mathcal{X}\subseteq {\mathbb{R}}^n$ be an open set and $H:\mathcal{X}\to{\it I}\left(\mathbb{R}\right)$ be an IVM. The $gH$-gradient of $H$ at $\tilde{x}\in\mathcal{X}$, denoted by $\nabla_{gH}H(\tilde{x})$, is defined by \[\nabla_{gH}H(\tilde{x}):=\left(D_1H(\tilde{x}),D_2H(\tilde{x}),\ldots,D_nH(\tilde{x})\right)^\top.\] 
\end{definition}

\medskip
\begin{definition}[$gH$-Lipschitz gradient {\normalfont\cite{ghosh2022generalized}}]\label{gH Lipschitz gradient of IVM definition}
	\normalfont
	Let $\mathcal{X}\subseteq {\mathbb{R}}^n$ be an open set. An IVM $H:\mathcal{X}\to{\it I}\left(\mathbb{R}\right)$ is said to have $gH$-Lipschitz gradient on $\mathcal{X}$ if there exists an $L>0$ such that \[\left\|\nabla_{gH}H(x)\ominus_{gH}\nabla_{gH}H(y)\right\|_{\it I\left({\mathbb{R}}\right)^n}\leq L\left\|x-y\right\| \text{ for all }x,y\in\mathcal{X}.\]
\end{definition}

\medskip
\begin{definition}[$gH$-Hessian {\normalfont\cite{upadhayay2024quasi}}]\label{gH Hessian of IVM definition}
	\normalfont
	Let $\mathcal{X}\subseteq {\mathbb{R}}^n$ be an open set and $H:\mathcal{X}\to{\it I}\left(\mathbb{R}\right)$ be an IVM. If all the second order $gH$-partial derivatives of $H$ at $\tilde{x}\in\mathcal{X}$ exist, then the $gH$-Hessian of $H$ at $\tilde{x}\in\mathcal{X}$, denoted by $\nabla_{gH}^2H(\tilde{x})$, is defined by 
	\[\nabla_{gH}^2H(\tilde{x}):=\left[\underline{\nabla_{gH}^2H(\tilde{x})},\overline{\nabla_{gH}^2H(\tilde{x})}\right]=\left[\min\left\{\frac{\partial^2 \underline{H}(\tilde{x})}{\partial x_{r}\partial x_{s}},\frac{\partial^2 \overline{H}(\tilde{x})}{\partial x_r \partial x_s}\right\},\max\left\{\frac{\partial^2 \underline{H}(\tilde{x})}{\partial x_{r}\partial x_{s}},\frac{\partial^2 \overline{H}(\tilde{x})}{\partial x_r \partial x_s}\right\}\right]_{1\leq r,s\leq n}.\]
\end{definition}

\medskip
\begin{definition}[Linear IVM {\normalfont\cite{ghosh2022generalized}}]\label{Linear IVM definition}
	\normalfont
	Let $\mathcal{X}$ be a linear subspace of ${\mathbb{R}}^n$. An IVM $H:\mathcal{X}\to{\it I}\left(\mathbb{R}\right)$ is said to be linear if \[H(x):=\bigoplus_{j=1}^{n}H(e_j)\odot x_j \text{ for all } x:=(x_1,x_2,\ldots,x_n)^\top \in \mathcal{X},\]
	where $e_j$ is the $j$-th standard basis vector of ${\mathbb{R}}^n$, $j=1,2,\ldots,n$ and `$\bigoplus_{j=1}^{n}$' denotes successive addition of $n$ number of intervals.
\end{definition}
\medskip
\begin{definition}[$gH$-differentiable IVM {\normalfont\cite{ghosh2022generalized}}]\label{$gH$-differentiabe IVM definition}
	\normalfont
	Let $\mathcal{X}\subseteq {\mathbb{R}}^n$ be an open set. An IVM $H:\mathcal{X}\to{\it I}\left(\mathbb{R}\right)$ is said to be $gH$-differentiable at a point $\tilde{x}\in\mathcal{X}$ if there exists a linear IVM $T_{\tilde{x}}:{\mathbb{R}}^n\to {\it I}\left({\mathbb{R}}\right)$, an IVM $E\left(H\left(\tilde{x};v\right)\right)$, and a $\delta>0$ such that  \[H\left(\tilde{x}+v\right)\ominus_{gH}H\left(\tilde{x}\right):=T_{\tilde{x}}\left(v\right)\oplus\left\|v\right\|\odot E\left(H\left(\tilde{x};v\right)\right)\text{ for all } v \text{ with } \left\|v\right\|<\delta,\]
	where $E\left(H\left(\tilde{x};v\right)\right)\to{\bf 0}$ as $\left\|v\right\|\to 0.$
	
	\bigskip
	
	\noindent
	If the IVM $H:\mathcal{X}\to{\it I}\left(\mathbb{R}\right)$ is $gH$-differentiable at each point $\tilde{x}\in\mathcal{X}$, then the IVM $H$ is called $gH$-differentiable on $\mathcal{X}$.
\end{definition}
\medskip
\begin{lemma}[ {\normalfont\cite{ghosh2022generalized}}]\label{linear IVM lemma}
	Let $\mathcal{X}\subseteq {\mathbb{R}}^n$ be an open set. If the IVM $H:\mathcal{X}\to{\it I}\left(\mathbb{R}\right)$ is $gH$-differentiable at a point $\tilde{x}\in\mathcal{X}$, then there exists an $\alpha$ and a $\delta>0$ such that 
	\[\underset{\alpha\to0}{\lim}\tfrac{1}{\alpha}\odot\left(H\left(\tilde{x}+\alpha v\right)\ominus_{gH}H\left(\tilde{x}\right)\right):=T_{\tilde{x}}\left(v\right) \text{ for all } v\in {\mathbb{R}}^n \text{ with } \left|\alpha\right|\left\|v\right\|<\delta,\]
	where $T_{\tilde{x}}\left(v\right)$ is the linear IVM.
	\bigskip
	
	\noindent Moreover, if the $gH$-gradient of $H$ exists at the point $\tilde{x}\in\mathcal{X}$, then the linear IVM $T_{\tilde{x}}\left(v\right)$ is expressed by 
	\[T_{\tilde{x}}\left(v\right):=\nabla_{gH} H\left(\tilde{x}\right)^\top\odot v \text{ for all } v:=\left(v_1,v_2,\ldots,v_n\right)^\top \in {\mathbb{R}}^n,\]
	where $\nabla_{gH} H\left(\tilde{x}\right)^\top\odot v:=\bigoplus_{j=1}^n  D_jH\left(\tilde{x}\right)\odot v_j.$
\end{lemma}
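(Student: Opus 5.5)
Let $T_{\tilde{x}}$ be the linear IVM and $E(H(\tilde{x};v))$ the error IVM from the definition of $gH$-differentiability at $\tilde{x}$, with its radius $\delta>0$. I would first derive the limit formula and then identify $T_{\tilde{x}}$ with the $gH$-gradient expression.

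For the first part, fix $v\neq 0$ and take $\alpha\neq 0$ with $|\alpha|\|v\|<\delta$. Applying the definition to the increment $\alpha v$ gives
\[
H(\tilde{x}+\alpha v)\ominus_{gH}H(\tilde{x})=T_{\tilde{x}}(\alpha v)\oplus\|\alpha v\|\odot E(H(\tilde{x};\alpha v)).
\]
By Definition~\ref{Linear IVM definition}, $T_{\tilde{x}}(\alpha v)=\bigoplus_j T_{\tilde{x}}(e_j)\odot(\alpha v_j)$. Scalar multiplication of an interval by a real number satisfies $\lambda\odot(\mu\odot S)=(\lambda\mu)\odot S$ in both sign cases, and it distributes over $\oplus$. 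Hence $\tfrac{1}{\alpha}\odot T_{\tilde{x}}(\alpha v)=T_{\tilde{x}}(v)$. Multiplying the display by $\tfrac{1}{\alpha}$ and distributing, I get
\[
\tfrac{1}{\alpha}\odot\bigl(H(\tilde{x}+\alpha v)\ominus_{gH}H(\tilde{x})\bigr)=T_{\tilde{x}}(v)\oplus \tfrac{|\alpha|}{\alpha}\|v\|\odot E(H(\tilde{x};\alpha v)).
\]
The second term has $\|\cdot\|_{I(\mathbb{R})}$-norm equal to $\|v\|\,\|E(H(\tilde{x};\alpha v))\|_{I(\mathbb{R})}$, and this tends to $0$ as $\alpha\to 0$ because $\|\alpha v\|\to 0$. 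Since $\oplus$ with an interval whose endpoints tend to $0$ is continuous in the endpoint topology, the limit is $T_{\tilde{x}}(v)$. The case $v=0$ is trivial, since both sides equal $\mathbf{0}$.

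For the second part, I specialize the limit to $v=e_j$. Then $\tfrac{1}{\alpha}\odot(H(\tilde{x}+\alpha e_j)\ominus_{gH}H(\tilde{x}))$ is exactly the difference quotient defining $H_j'(\tilde{x}_j)$ in Definition~\ref{gH partial derivative of IVM definition}. Since limits of IVMs are unique (they are limits of endpoint pairs), this gives $T_{\tilde{x}}(e_j)=D_jH(\tilde{x})$. Substituting into the defining formula of the linear IVM yields
\[
T_{\tilde{x}}(v)=\bigoplus_{j=1}^n D_jH(\tilde{x})\odot v_j=\nabla_{gH}H(\tilde{x})^\top\odot v.
\]

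The main obstacle is the algebra of $\odot$ and $\oplus$. Interval arithmetic is not a vector space: $S\ominus S\neq\mathbf{0}$ in general. So I must check carefully that scalar multiplication distributes over $\oplus$ for a common scalar. This holds because a single sign $\lambda$ either preserves or flips both endpoints consistently. I must also check that $\tfrac{1}{\alpha}\odot(\alpha\odot S)=S$ for negative $\alpha$ as well. I would verify both identities directly from item (iv) of the arithmetic definitions before running the limit argument.
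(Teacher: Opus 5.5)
Your argument is correct: substituting the increment $\alpha v$ into the definition of $gH$-differentiability is the standard proof of this result. So is dividing by $\alpha$ via the valid identities $\lambda\odot(\mu\odot S)=(\lambda\mu)\odot S$ and $\lambda\odot(A\oplus B)=\lambda\odot A\oplus\lambda\odot B$, letting $\alpha\to 0$, and then taking $v=e_j$ to identify $T_{\tilde{x}}(e_j)=D_jH(\tilde{x})$. The paper gives no proof of its own (it quotes the lemma from \cite{ghosh2022generalized}), and your second step also shows the $gH$-partial derivatives exist automatically, so the extra hypothesis on the $gH$-gradient is redundant.
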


\medskip
\begin{definition}[Convex IVM {\normalfont\cite{wu2007karush}}]\label{Convex IVM definition}
	\normalfont
	Let $\mathcal{X}\subseteq {\mathbb{R}}^n$ be convex. An IVM $H:\mathcal{X}\to{\it I}\left(\mathbb{R}\right)$ is said to be convex if \[ H\left(\theta x+\left(1-\theta\right)y\right)\preceq\theta\odot H( x)\oplus(1-\theta)\odot H(y) \text{ for all } x,y\in \mathcal{X} \text{ and } \theta \in [0,1].\]
\end{definition}
\medskip
\begin{lemma}[ {\normalfont\cite{ghosh2022generalized}}]\label{convex differentiable lemma}
	Let $\mathcal{X}\subseteq {\mathbb{R}}^n$ be an open and convex set. If the IVM $H:\mathcal{X}\to{\it I}\left(\mathbb{R}\right)$ is convex and $gH$-differentiable on $\mathcal{X}$, then 
	\[H\left(y\right)\succeq H\left(x\right)\oplus\nabla_{gH}H\left(x\right)^\top\odot\left(y-x\right)\text{ for all } x,y\in{\mathcal{X}}.\]
\end{lemma}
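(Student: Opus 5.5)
The plan is to reduce everything to the boundary functions. First, unwind Definition~\ref{Convex IVM definition}. For $\theta\in[0,1]$, scalar multiplication preserves endpoint order, so $\theta\odot H(x)\oplus(1-\theta)\odot H(y)=[\theta\underline H(x)+(1-\theta)\underline H(y),\,\theta\overline H(x)+(1-\theta)\overline H(y)]$. Then $\preceq$ in Definition~\ref{Dominance definition} says exactly that $\underline H$ and $\overline H$ are convex real functions on $\mathcal X$.

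Second, use $gH$-differentiability to get derivative information on $\underline H,\overline H$. Apply Lemma~\ref{linear IVM lemma} along $v=y-x$. Write $h(\alpha)=H(x+\alpha v)$ and $\phi(\alpha):=\tfrac1\alpha\odot(h(\alpha)\ominus_{gH}h(0))$, with $\alpha>0$ and $\alpha\|v\|<\delta$. The endpoints of $\phi(\alpha)$ are the minimum and maximum of the difference quotients $q_\ell(\alpha):=(\underline H(x+\alpha v)-\underline H(x))/\alpha$ and $q_u(\alpha):=(\overline H(x+\alpha v)-\overline H(x))/\alpha$. By convexity each quotient is nondecreasing in $\alpha$, so the one-sided directional derivatives $\underline H'(x;v)$ and $\overline H'(x;v)$ exist. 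Letting $\alpha\downarrow0$ gives
\[
T_x(v)=\left[\min\{\underline H'(x;v),\overline H'(x;v)\},\,\max\{\underline H'(x;v),\overline H'(x;v)\}\right].
\]

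Third, compare endpoints, with $T_x(v)=\nabla_{gH}H(x)^\top\odot(y-x)$ by Lemma~\ref{linear IVM lemma}. Take $\alpha=1$ in the monotone quotients (via a chord argument on $[0,1]$ if $\|v\|\ge\delta$). This gives $\underline H(y)-\underline H(x)\ge \underline H'(x;v)\ge \underline{T_x(v)}$, which is the lower-endpoint inequality $\underline H(y)\ge \underline H(x)+\underline{T_x(v)}$. The upper endpoint needs
\[
\overline H(y)-\overline H(x)\ \ge\ \max\{\underline H'(x;v),\overline H'(x;v)\}.
\]
Convexity of $\overline H$ gives this only for the term $\overline H'(x;v)$. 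So the plan would be to show $\underline H'(x;v)\le\overline H'(x;v)$, for instance from $\underline H\le\overline H$ together with some contact condition.

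\textbf{This upper-endpoint step is the main obstacle, and I expect it to fail in general.} The inequality $\underline H'(x;v)\le \overline H'(x;v)$ does follow from $\underline H\le\overline H$ when $\underline H(x)=\overline H(x)$. When $\underline H(x)<\overline H(x)$, nothing forces it. Moreover, $\nabla_{gH}H(x)^\top\odot v=\bigoplus_j D_jH(x)\odot v_j$ has upper endpoint $\sum_j\max\{a_jv_j,b_jv_j\}$, where $a=\nabla\underline H(x)$ and $b=\nabla\overline H(x)$. This can strictly exceed $b^\top v$.

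A concrete check shows the failure. Take $n=2$, $\underline H(x)=x_1$ and $\overline H(x)=x_1+e^{x_2}$. Both are convex, and $H$ is $gH$-differentiable because $a-b=(0,-1)$ has a single nonzero entry, so interval widths match. At $x=0$ we have $D_1H=[1,1]$ and $D_2H=[0,1]$. For $y=(1,-1)^\top$ this gives $H(0)\oplus\nabla_{gH}H(0)^\top\odot y=[0,1]\oplus[0,1]=[0,2]$, while $H(y)=[1,1+e^{-1}]$. Since $1+e^{-1}<2$, the claimed dominance fails at the upper endpoint.

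So the first two steps go through, and they do establish the lower-endpoint half. To close the upper endpoint I would need an extra hypothesis, for example $\underline H'(x;\cdot)\le\overline H'(x;\cdot)$, or a degenerate interval $H(x)$. Without such a hypothesis I do not expect the statement as worded to be provable, by the example above.
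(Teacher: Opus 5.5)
Your diagnosis is right, and the paper offers no proof to compare against, since it imports this lemma from the literature. Your Steps 1--2 are sound and give the lower-endpoint inequality, and the statement as transcribed here is false at the upper endpoint. The gap is in your witness: it does not satisfy the hypothesis as the paper defines it.

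The paper's $gH$-differentiability demands the exact identity $H(x+v)\ominus_{gH}H(x)=T_x(v)\oplus\|v\|\odot E(H(x;v))$ for all $\|v\|<\delta$. Since $\oplus$ adds widths, this forces $w\bigl(H(x+v)\ominus_{gH}H(x)\bigr)\ge w\bigl(T_x(v)\bigr)$ for every small $v$, where $w([\underline s,\overline s])=\overline s-\underline s$. Agreement of widths to first order is not enough.

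In your example, the limit along $+e_2$ forces $T_0(e_2)=D_2H(0)=[0,1]$, so $w(T_0(se_2))=|s|$. For $s<0$, however, $w\bigl(H(se_2)\ominus_{gH}H(0)\bigr)=1-e^{s}<|s|$. Hence no admissible $E$ exists, and your $H$ is not $gH$-differentiable at $0$ in the paper's sense. It is differentiable only in the weaker sense where the remainder is measured with $\ominus_{gH}$.

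The repair is to make the width affine instead of strictly convex. Take $\mathcal{X}=(-1,1)$ and $H(t)=[0,1+t]$. Both endpoints are convex. Also $H(x+v)\ominus_{gH}H(x)=[\min\{0,v\},\max\{0,v\}]=[0,1]\odot v$ exactly, so $E\equiv\mathbf{0}$ and $\nabla_{gH}H\equiv[0,1]$ on $\mathcal{X}$. For $x=0$ and $y=-\tfrac12$,
\[
H(y)=\left[0,\tfrac12\right]\quad\text{but}\quad H(0)\oplus[0,1]\odot\left(-\tfrac12\right)=\left[-\tfrac12,1\right],
\]
and $\tfrac12<1$, so the dominance fails. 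Equivalently, in your own example replace $e^{x_2}$ by $1+x_2$ on $\{x_2>-1\}$ and take $y=(1,-\tfrac12)^\top$.

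Your Steps 1--2 in fact already prove the $\ominus_{gH}$ form, $H(y)\ominus_{gH}H(x)\succeq\nabla_{gH}H(x)^\top\odot(y-x)$. Convexity gives $\underline H(y)-\underline H(x)\ge\underline H'(x;v)$ and $\overline H(y)-\overline H(x)\ge\overline H'(x;v)$. So both the minimum and the maximum of these two differences dominate the corresponding endpoints of $T_x(v)$. That is presumably the intended statement.
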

\medskip
\begin{definition}[Strongly convex IVM {\normalfont\cite{upadhayay2024quasi}}]\label{Strongly convex IVM definition}
	\normalfont
	Let $\mathcal{X}\subseteq {\mathbb{R}}^n$ be convex. An IVM $H:\mathcal{X}\to{\it I}\left(\mathbb{R}\right)$ is said to be strongly convex with modulus $\gamma>0$ if $H(x)\ominus_{gH} \left[\tfrac{\gamma}{2},\tfrac{\gamma}{2}\right]\odot \|x\|^2$ is convex in $\mathcal{X}$.
\end{definition}

\medskip
\subsection{Multiobjective Interval Optimization Problem}
Let $U\subseteq{\mathbb{R}}^n$ be an open set and $G:U\rightarrow{\it I}\left({\mathbb{R}}\right)^m$ be a multiobjective IVM given by $G:=\left(G_1,G_2,\ldots,G_m\right)^\top$, where $G_i:U\rightarrow{\it I}\left({\mathbb{R}}\right)$ is a twice $gH$-continuously differentiable and locally strongly convex IVM given by $G_i:=\left[\underline{G}_i,\overline{G}_i\right]$ for all $i=1,2,\ldots,m$.
 We consider to solve the following MIOP in this study:
\begin{align}\label{minG(x)}
	\underset{x\in U}{\min}\: G(x).
\end{align}
Let us now define the solution concepts -- weakly Pareto optimal, Pareto optimal, and Pareto critical points for the MIOP \eqref{minG(x)}.

\medskip
\begin{definition}[Weakly Pareto optimal point{\normalfont \cite{mondal2025steepest}}]\label{weakly Pareto optimal}
	\normalfont
	An $x^\star\in U$ is said to be a weakly Pareto optimal point of the MIOP \eqref{minG(x)} if there does not exist any other $x\in U$ such that $G_i(x)\prec G_i(x^\star)$ for all $i=1,2,\ldots,m$.
\end{definition} 

\medskip
\begin{definition}[Pareto optimal point {\normalfont \cite{mondal2025steepest}}]\label{Pareto optimal}
	\normalfont
	An $x^\star\in U$ is said to be a Pareto optimal point of the MIOP \eqref{minG(x)} if there does not exist any other $x\in U$ such that $G_i(x)\preceq G_i(x^\star)$ for all $i=1,2,\ldots,m$.
\end{definition} 
\medskip
\begin{remark}
	\normalfont
	An $x^\star\in U$ is said to be a locally Pareto optimal point (respectively, locally weakly Pareto optimal point) of the MIOP \eqref{minG(x)} if there exists a neighborhood $V\subseteq U$ of $x^\star$ such that the point $x^\star$ is a Pareto optimal (respectively, weakly Pareto optimal ) of the MIOP \eqref{minG(x)} restricted to $V$. Note that if $U$ is convex and $G$ is $I\left({\mathbb{R}}\right)^m$ convex, i.e., $G_1,G_2,\ldots,G_m$ are all convex IVM, then each local Pareto optimal point is globally Pareto optimal.
\end{remark}
\medskip
\begin{definition}[Pareto critical point {\normalfont \cite{mondal2025steepest}}]\label{Pareto critical}
	\normalfont
	A point $x^\star\in U$ is said to be Pareto critical point of the MIOP \eqref{minG(x)} if there does not exist any $v\in{\mathbb{R}}^n$ such that $\nabla_{gH} G_i(x^\star)^\top\odot v\prec{\bf 0}$ for all $i=1,2,\ldots,m$.
\end{definition}
\medskip
\begin{definition}[Descent direction {\normalfont \cite{mondal2025steepest}}]\label{Descent direction}
	\normalfont
	A direction vector $v\in{\mathbb{R}}^n$ is said to be descent direction of the MIOP \eqref{minG(x)} at a point $x\in{\mathbb{R}}^n$ if there exists a $\delta>0$ such that \[G_i\left(x+tv\right)\prec G_i\left(x\right) \text{ for all } t\in\left(0,\delta\right) \text{and for all }i=1,2,\ldots,m.\] 
\end{definition}

\medskip
Before proceeding the following result, we use the notation $G\in C_{gH}^k\left(U,I\left({\mathbb{R}}\right)^m\right)$ to mean $G_1,G_2,\ldots,G_m$ are all $k$-times $gH$-continuously differentiable IVMs. Unless explicitly mentioned, we assume that $\nabla_{gH}^2G_i(x)$ is positive definite for all $x\in U$ and for all $i=1,2,\ldots,m$, i.e., \[v^\top \odot \nabla_{gH}^2G_i(x)\odot v\succ {\bf 0}\text{ for all nonzero } v\in {\mathbb{R}}^n,\text{ for all } x\in U,\text{ and for all }i=1,2,\ldots,m. \]
Under this assumption, $G$ is ${\it I}\left({\mathbb{R}}\right)^m$ convex on each convex subset of $U$. Throughout the article, we use the notation $\nabla_{gH}^2G_i(x)\succ{\bf 0}$ to mean that  $\nabla_{gH}^2G_i(x)$ is positive definite.
\medskip
\begin{lemma}[{\normalfont \cite{mondal2025steepest}}]\label{interrelation lemma of Pareto optimal and critical}
	Assume that $G\in C_{gH}^1\left(U,I\left({\mathbb{R}}\right)^m\right).$
	\begin{itemize}
		\item[(i)]  If $x^\star$ is a locally weak Pareto optimal point of the MIOP \eqref{minG(x)}, then it is a Pareto critical point of the MIOP \eqref{minG(x)}.
		\item[(ii)]  If $U$ is convex, $G$ is $I\left({\mathbb{R}}\right)^m$ convex, and $x^\star\in U$ is a Pareto critical point of the MIOP \eqref{minG(x)}, then $x^\star$ is a weakly Pareto optimal point of the MIOP \eqref{minG(x)}.
	\end{itemize}
\end{lemma}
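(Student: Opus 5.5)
Part (i) goes by contraposition; part (ii) by contradiction through the convexity inequality of Lemma \ref{convex differentiable lemma}.

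\emph{Part (i).} Suppose $x^\star$ is a locally weak Pareto optimal point but not Pareto critical.
\begin{enumerate}
\item[1.] By Definition \ref{Pareto critical} there is $v\in\mathbb{R}^n$ with $\nabla_{gH}G_i(x^\star)^\top\odot v\prec{\bf 0}$ for all $i$.
\item[2.] By Lemma \ref{linear IVM lemma}, for $t>0$ small,
\[
H_i(t):=\tfrac1t\odot\big(G_i(x^\star+tv)\ominus_{gH}G_i(x^\star)\big)\to \nabla_{gH}G_i(x^\star)^\top\odot v .
\]
\item[3.] Strict dominance $A\prec{\bf 0}$ only gives $\overline a\le0$ with $\underline a<0$, so one endpoint may vanish and strictness need not survive the limit. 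To handle this I pass to the real-valued one-sided derivatives. $G\in C^1_{gH}$ and the $gH$-difference formula give $G_i(x^\star+tv)\ominus_{gH}G_i(x^\star)=[\min\{\Delta\underline G_i,\Delta\overline G_i\},\max\{\cdot\}]$. Hence $\nabla_{gH}G_i(x^\star)^\top\odot v\prec{\bf 0}$ forces both directional derivatives $\underline G_i'(x^\star;v)$ and $\overline G_i'(x^\star;v)$ to be $\le0$, with at least one $<0$.
\item[4.] If both directional derivatives are negative, then for small $t$ both $\Delta\underline G_i<0$ and $\Delta\overline G_i<0$. This is exactly $G_i(x^\star+tv)\prec G_i(x^\star)$.
\item[5.] If one directional derivative is zero, I perturb the direction. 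Since no direction satisfies the criticality condition, I aim to produce one with strict inequality in every boundary function. I would use that the set of admissible directions is an open cone intersected with closed half-spaces, and pick a nearby direction where the gradients of the relevant boundary functions give strict decrease. If strictness in a zero component cannot be forced, the fallback is to exploit the min/max structure as in the cited source \cite{mondal2025steepest}.
\item[6.] Taking the minimum of the finitely many $\delta_i$, the points $x^\star+tv$ lie in any prescribed neighbourhood $V$ of $x^\star$ and satisfy $G_i(x^\star+tv)\prec G_i(x^\star)$ for all $i$. This contradicts local weak Pareto optimality.
\end{enumerate}

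\emph{Part (ii).} Suppose $x^\star$ is Pareto critical but some $y\in U$ has $G_i(y)\prec G_i(x^\star)$ for all $i$. Set $v=y-x^\star$. By Lemma \ref{convex differentiable lemma},
\[
G_i(y)\succeq G_i(x^\star)\oplus \nabla_{gH}G_i(x^\star)^\top\odot v .
\]
Combining the two dominances endpointwise:
\[
\underline G_i(x^\star)+\underline{T_i}\le \underline G_i(y)\le\underline G_i(x^\star),\qquad
\overline G_i(x^\star)+\overline{T_i}\le \overline G_i(y)\le\overline G_i(x^\star),
\]
with at least one of the right-hand inequalities strict. Here $T_i=\nabla_{gH}G_i(x^\star)^\top\odot v$. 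Thus $\underline{T_i}\le0$, $\overline{T_i}\le0$, and one of them is strict, so $T_i\prec{\bf 0}$ for every $i$. This contradicts Definition \ref{Pareto critical}.

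\emph{Main obstacle.} The delicate step is step 5 of part (i): converting strict dominance of the linearization into strict dominance of the actual values when an endpoint derivative vanishes. I expect this to require the careful endpoint analysis of the $gH$-gradient, or simply to invoke the argument of \cite{mondal2025steepest}.
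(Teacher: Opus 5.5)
\textbf{Part (i) has a genuine gap at step 5, and it cannot be closed: statement (i) is false under the paper's definition of $\prec$.} Your part (ii) is correct. Combining $G_i(y)\prec G_i(x^\star)$ endpointwise with the convexity inequality gives $\underline{T_i}\le 0$ and $\overline{T_i}\le 0$, with strictness inherited from whichever endpoint of $G_i(y)$ is strictly smaller. Hence $T_i\prec\mathbf 0$ for the common direction $v=y-x^\star$. The paper itself gives no proof of this lemma; it only cites the source, so there is nothing to compare against.

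For a counterexample to (i), take $n=m=1$, $U=(-1,\infty)$ and
\[
G(x)=[x^2-x,\; x^2+1].
\]
It satisfies every hypothesis of the paper:
\begin{itemize}
\item $\underline G<\overline G$ on $U$;
\item $G'(x)=[2x-1,2x]$ and $\nabla_{gH}^2G\equiv[2,2]$, so the positive-definiteness and strong-convexity assumptions hold;
\item $G(x+v)\ominus_{gH}G(x)=G'(x)\odot v\oplus[v^2,v^2]$, so $G$ is $gH$-differentiable even in the strict $\oplus$-form of the paper's definition.
\end{itemize}
At $x^\star=0$ we have $G'(0)\odot 1=[-1,0]\prec\mathbf 0$, so $0$ is not Pareto critical. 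Yet $G(x)\preceq G(0)=[0,1]$ forces $x^2+1\le 1$, i.e.\ $x=0$, so $0$ is even Pareto optimal.

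This is exactly your case ``one directional derivative is zero'': $\overline G'(0;1)=0$ and $\underline G'(0;1)=-1$. The upper endpoint of $G(t)$ is $1+t^2>1$ for every $t\neq 0$. There is also no other direction to perturb to, since $G'(0)\odot v=[0,-v]$ for $v<0$. So neither the perturbation idea nor an appeal to the cited source can rescue step 5. The set $\{A: A\prec\mathbf 0\}$ is not open, and first-order information cannot decide the sign of an endpoint whose derivative vanishes.

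\textbf{What your argument does prove.} Steps 1--4 are a complete proof of the version of (i) in which criticality is tested by $\overline{\nabla_{gH}G_i(x^\star)^\top\odot v}<0$ for all $i$. This is precisely the condition that the paper's test $\xi(x)<0$ detects under positive definiteness. In that case both one-sided directional derivatives of the boundary functions are negative, so $G_i(x^\star+tv)\prec G_i(x^\star)$ for small $t>0$.

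With that stronger notion, however, part (ii) breaks. Take $G(x)=[-x,1]$ on $U=(-1,\infty)$: it is convex and every point is critical in this sense, since the upper endpoint of $G'(x)\odot v$ is $\max\{-v,0\}\ge 0$. But no point is weakly Pareto optimal, since $G(x^\star+t)\prec G(x^\star)$ for every $t>0$. So no single notion of criticality makes both parts true. A correct statement has to use the upper-endpoint notion in (i) and the $\prec$ notion in (ii).
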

\medskip
\begin{proposition}\label{Pareto critical to Pareto optimal}
	If $U$ is convex, $G\in C_{gH}^2\left(U,I\left({\mathbb{R}}\right)^m\right)$, $\nabla_{gH}^2G_i\left(x\right)\succ {\bf 0}$ for all $i=1,2,\ldots,m$ and all $x\in U$, and if $x^\star\in U$ is a Pareto critical point of the MIOP \eqref{minG(x)}, then $x^\star$ is a Pareto optimal point of the MIOP \eqref{minG(x)}.
\end{proposition}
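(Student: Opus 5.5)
The plan is a proof by contradiction. It combines Lemma~\ref{interrelation lemma of Pareto optimal and critical}(ii) with a strict-convexity argument on the segment joining $x^\star$ to a point that dominates it. First note that, since $U$ is convex and $\nabla_{gH}^2G_i(x)\succ{\bf 0}$ on $U$, the map $G$ is $I(\mathbb{R})^m$ convex on $U$, as remarked before Lemma~\ref{interrelation lemma of Pareto optimal and critical}. So Lemma~\ref{interrelation lemma of Pareto optimal and critical}(ii) applies, and the Pareto critical point $x^\star$ is weakly Pareto optimal. Now suppose $x^\star$ is not Pareto optimal. Then there is some $\hat x\in U$, $\hat x\neq x^\star$, with $G_i(\hat x)\preceq G_i(x^\star)$ for all $i$, that is, $\underline{G}_i(\hat x)\le \underline{G}_i(x^\star)$ and $\overline{G}_i(\hat x)\le\overline{G}_i(x^\star)$.

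I would take the midpoint $z:=\tfrac12(x^\star+\hat x)\in U$ and aim to show $G_i(z)\prec G_i(x^\star)$ for every $i$. This contradicts weak Pareto optimality. For this I pass to the boundary functions. The real matrices $\nabla^2\underline{G}_i(x)$ and $\nabla^2\overline{G}_i(x)$ lie entrywise in the interval matrix $\nabla_{gH}^2G_i(x)$. Interval arithmetic encloses real computations, so for every nonzero $v$ both $v^\top\nabla^2\underline{G}_i(x)v$ and $v^\top\nabla^2\overline{G}_i(x)v$ lie in $v^\top\odot\nabla_{gH}^2G_i(x)\odot v$. The lower endpoint of that interval is $\ge 0$, so both boundary functions are convex on $U$. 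The remaining task is to get strictness for at least one boundary function along the direction $d:=\hat x-x^\star$. Concretely, I want $\psi_i(t):=\underline{G}_i(x^\star+td)+\overline{G}_i(x^\star+td)$ to be strictly convex on $[0,1]$, which follows if $d^\top(\nabla^2\underline{G}_i+\nabla^2\overline{G}_i)(x)d>0$ along the segment.

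Given this, convexity yields $\underline{G}_i(z)\le\tfrac12(\underline{G}_i(x^\star)+\underline{G}_i(\hat x))\le \underline{G}_i(x^\star)$, and the same inequality for $\overline{G}_i$. Strict convexity of $\psi_i$ gives $\psi_i(1/2)<\tfrac12(\psi_i(0)+\psi_i(1))\le\psi_i(0)$. Hence at least one of the two boundary inequalities is strict, which means $G_i(z)\prec G_i(x^\star)$ in the sense of Definition~\ref{Dominance definition}(ii). Since this holds for all $i=1,\ldots,m$, it contradicts weak Pareto optimality of $x^\star$, and the proposition follows.

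The main obstacle is the strictness step: extracting from $v^\top\odot\nabla_{gH}^2G_i(x)\odot v\succ{\bf 0}$ that some real quadratic form of a boundary function is strictly positive. The interval quadratic form overestimates, so its upper endpoint being positive does not automatically force $v^\top\nabla^2\overline{G}_i(x)v>0$ or $v^\top\nabla^2\underline{G}_i(x)v>0$. I would first try to show this directly from the definition of the $gH$-Hessian: the endpoints of $v^\top\odot\nabla_{gH}^2G_i(x)\odot v$ are attained by choosing entries from $\{\nabla^2\underline{G}_i,\nabla^2\overline{G}_i\}$. If that fails, I would read the positive definiteness assumption in the sense used in this paper's framework (cf.\ the local strong convexity assumed on each $G_i$), namely that it controls the boundary Hessians and so gives strict convexity of $\underline{G}_i+\overline{G}_i$ on $U$. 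After that, the rest of the argument is routine.
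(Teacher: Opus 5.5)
As written, your proposal leaves its one nontrivial step open. That step is not actually an obstacle, and you should drop the fallback of reinterpreting the positive-definiteness hypothesis, since that would change the statement.

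Your first idea already contains the answer if you push it one step further. Fix $x\in U$ and put $A:=\nabla^2\underline{G}_i(x)$, $B:=\nabla^2\overline{G}_i(x)$. The $(r,s)$ entry of $\nabla_{gH}^2G_i(x)$ is $[\min\{a_{rs},b_{rs}\},\max\{a_{rs},b_{rs}\}]$. Multiplying it by the real number $v_rv_s$ gives an interval whose two endpoints are $a_{rs}v_rv_s$ and $b_{rs}v_rv_s$, in some order. The lower endpoint of $v^\top\odot\nabla_{gH}^2G_i(x)\odot v$ picks one of them in each entry, and the upper endpoint picks the other. Since Moore addition adds endpoints, writing $[\ell(v),u(v)]:=v^\top\odot\nabla_{gH}^2G_i(x)\odot v$ you get
\[
\ell(v)+u(v)=\sum_{r=1}^{n}\sum_{s=1}^{n}\left(a_{rs}+b_{rs}\right)v_rv_s=v^\top\left(A+B\right)v .
\]
You can also see this in \eqref{gxi-lower-upper}: when the two quadratic parts are added, the $|v|$-terms cancel.

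By Definition~\ref{Dominance definition}(ii), $v^\top\odot\nabla_{gH}^2G_i(x)\odot v\succ\mathbf{0}$ means exactly $\ell(v)\ge 0$ and $u(v)>0$. Hence $v^\top(A+B)v>0$ for every $v\neq 0$ and every $x\in U$; in particular at least one of $v^\top Av$, $v^\top Bv$ is positive, contrary to your worry. The interval overestimation is symmetric about the exact value $\tfrac12 v^\top(A+B)v$, so it cannot destroy strictness of the sum. The paper uses this same fact tacitly in Theorem~\ref{descent direction finding theorem}(iii), where $\lambda_{\min}$ of $\underline{\nabla_{gH}^2G_i(x)}+\overline{\nabla_{gH}^2G_i(x)}=A+B$ is treated as positive. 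Applied with $v=d$ along the segment, this gives $\psi_i''>0$ on $[0,1]$. With this line inserted, your argument is complete.

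Your route is genuinely different from the paper's. The paper only says the proof is similar to that of Lemma~\ref{interrelation lemma of Pareto optimal and critical}(ii), i.e.\ a first-order argument. From $\hat x\neq x^\star$ with $G_i(\hat x)\preceq G_i(x^\star)$, one would invoke a \emph{strict} version of the gradient inequality of Lemma~\ref{convex differentiable lemma} to obtain $\nabla_{gH}G_i(x^\star)^\top\odot(\hat x-x^\star)\prec\mathbf{0}$ for all $i$. That contradicts Pareto criticality directly, but the strict interval inequality is neither stated nor proved in the paper.

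You instead use Lemma~\ref{interrelation lemma of Pareto optimal and critical}(ii) exactly as stated. You then upgrade weak to full Pareto optimality by a zeroth-order midpoint argument that uses only real inequalities between boundary functions. This makes clear what the Hessian hypothesis contributes: convexity of $\underline{G}_i$ and $\overline{G}_i$, and strict convexity of $\underline{G}_i+\overline{G}_i$. It also proves slightly more: under that hypothesis every weakly Pareto optimal point is Pareto optimal. Criticality enters only through Lemma~\ref{interrelation lemma of Pareto optimal and critical}(ii), which you, like the paper, take as given. The paper's route would avoid the detour through weak Pareto optimality, at the cost of that additional strict inequality.
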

\medskip
\begin{proof}
The proof is similar to the proof of item (ii) of Lemma \ref{interrelation lemma of Pareto optimal and critical}.
\end{proof}

\section{Newton Method}\label{Newton method}
In this section, we develop the Newton method for capturing the Pareto critical points of the MIOP \eqref{minG(x)}. We first focus on computing a descent direction at a non-Pareto critical point, and then show the computation of the step length along with the step-wise algorithm.

\subsection{Computing Descent Direction}
We now proceed to compute the Newton direction for the MIOP \eqref{minG(x)}. Before move forward to identify such direction $v\in{\mathbb{R}}^n$ at a given point $x\in U$, we define an IVM $g_x^i:{\mathbb{R}}^n\rightarrow {\it I}({\mathbb{R}})$  for each $i=1,2,\ldots,m$ by 
 \begin{align}\label{gxi}
 g_x^i(v):=\nabla_{gH} G_i(x)^\top\odot v\oplus \tfrac{1}{2}\odot v^\top \odot \nabla_{gH}^2 G_i(x)\odot v .
\end{align}
Denote the $r$-th component of $\nabla_{gH}G_i(x)$ by $$\left[\underline{\nabla_{gH}G_i(x)_r},\overline{\nabla_{gH}G_i(x)_r}\right]:=\left[\min\left\{\frac{\partial \underline{G}_i(x)}{\partial x_r},\frac{\partial \overline{G}_i(x)}{\partial x_r}\right\},\max\left\{\frac{\partial \underline{G}_i(x)}{\partial x_r},\frac{\partial \overline{G}_i(x)}{\partial x_r}\right\}\right]$$ and the $(r,s)$ element of $\nabla_{gH}^2G_i(x)$ by $$\left[\underline{\nabla_{gH}^2G_i(x)_{rs}},\overline{\nabla_{gH}^2G_i(x)_{rs}}\right]:=\left[\min\left\{\frac{\partial^2 \underline{G}_i(x)}{\partial x_{r}\partial x_{s}},\frac{\partial^2 \overline{G}_i(x)}{\partial x_r \partial x_s}\right\},\max\left\{\frac{\partial^2 \underline{G}_i(x)}{\partial x_{r}\partial x_{s}},\frac{\partial^2 \overline{G}_i(x)}{\partial x_r \partial x_s}\right\}\right].$$ Let $\underline{g}_x^i$ and $\overline{g}_x^i$ be the lower and the upper boundary functions of the IVM $g_x^i:{\mathbb{R}}^n\to {\it I}\left({\mathbb{R}}\right)$. Then, we have 
\begin{equation}\label{g_xilower}
	\begin{aligned}
	\underline{g}_x^i(v):=&\sum_{r=1}^{n}\min\left\{\underline{\nabla_{gH} G_i(x)_r}v_r,\overline{\nabla_{gH} G_i(x)_r}v_r\right\}\\
	&+\tfrac{1}{2}\sum_{r=1}^{n}\sum_{s=1}^{n}\min\left\{\underline{\nabla_{gH}^2 G_i(x)_{rs}}v_r v_s,\overline{\nabla_{gH}^2 G_i(x)_{rs}}v_r v_s\right\}
\end{aligned}
	\end{equation}
	 and 
	\begin{equation}\label{g_xiupper}
		\begin{aligned}
		\overline{g}_x^i(v):=&\sum_{r=1}^{n}\max\left\{\underline{\nabla_{gH} G_i(x)_r}v_r,\overline{\nabla_{gH} G_i(x)_r}v_r\right\}\\
	 &+\tfrac{1}{2}\sum_{r=1}^{n}\sum_{s=1}^{n}\max\left\{\underline{\nabla_{gH}^2 G_i(x)_{rs}}v_r v_s,\overline{\nabla_{gH}^2 G_i(x)_{rs}}v_r v_s\right\}.
	\end{aligned}
\end{equation}
Denote $\left|v\right|:=\left(\left|v_1\right|,\left|v_2\right|,\ldots,\left|v_n\right|\right)^\top$. Then, we have
\begin{align*}
	\underline{g}_x^i(v)~=~&\tfrac{1}{2}\sum_{r=1}^{n}\left[\left(\underline{\nabla_{gH} G_i(x)_r}v_r+\overline{\nabla_{gH} G_i(x)_r}v_r\right)-\left|{\overline{\nabla_{gH} G_i(x)_r}v_r-\underline{\nabla_{gH} G_i(x)_r}v_r}\right|\right]\\
	&+\tfrac{1}{4}\sum_{r=1}^{n}\sum_{s=1}^{n}\left[\left(\underline{\nabla_{gH}^2 G_i(x)_{rs}}v_rv_s+\overline{\nabla_{gH}^2 G_i(x)_{rs}}v_r v_s\right)-\left|{\overline{\nabla_{gH}^2 G_i(x)_{rs}}v_rv_s-\underline{\nabla_{gH}^2 G_i(x)_{rs}}v_rv_s}\right|\right]   \\
	~=~&  \tfrac{1}{2}\left(\underline{\nabla_{gH} G_i(x)}+\overline{\nabla_{gH} G_i(x)}\right)^\top v-\tfrac{1}{2}\left({\overline{\nabla_{gH} G_i(x)}-\underline{\nabla_{gH} G_i(x)}}\right)^\top \left|v\right|\\
	&+  \tfrac{1}{4} v^\top\left(\underline{\nabla_{gH}^2 G_i(x)}+\overline{\nabla_{gH}^2 G_i(x)}\right) v-\tfrac{1}{4}\left|v\right|^\top\left({\overline{\nabla_{gH}^2 G_i(x)}-\underline{\nabla_{gH}^2 G_i(x)}}\right) \left|v\right|           
\end{align*}
and 
\begin{align*}
	\overline{g}_x^i(v)~=~&\tfrac{1}{2}\sum_{r=1}^{n}\left[\left(\underline{\nabla_{gH} G_i(x)_r}v_r+\overline{\nabla_{gH} G_i(x)_r}v_r\right)+\left|{\overline{\nabla_{gH} G_i(x)_r}v_r-\underline{\nabla_{gH} G_i(x)_r}v_r}\right|\right]\\
	&+\tfrac{1}{4}\sum_{r=1}^{n}\sum_{s=1}^{n}\left[\left(\underline{\nabla_{gH}^2 G_i(x)_{rs}}v_rv_s+\overline{\nabla_{gH}^2 G_i(x)_{rs}}v_r v_s\right)+\left|{\overline{\nabla_{gH}^2 G_i(x)_{rs}}v_rv_s-\underline{\nabla_{gH}^2 G_i(x)_{rs}}v_rv_s}\right|\right]   \\
	~=~&  \tfrac{1}{2}\left(\underline{\nabla_{gH} G_i(x)}+\overline{\nabla_{gH} G_i(x)}\right)^\top v+\tfrac{1}{2}\left({\overline{\nabla_{gH} G_i(x)}-\underline{\nabla_{gH} G_i(x)}}\right)^\top \left|v\right|\\
	&+  \tfrac{1}{4} v^\top\left(\underline{\nabla_{gH}^2 G_i(x)}+\overline{\nabla_{gH}^2 G_i(x)}\right) v+\tfrac{1}{4}\left|v\right|^\top\left({\overline{\nabla_{gH}^2 G_i(x)}-\underline{\nabla_{gH}^2 G_i(x)}}\right) \left|v\right|.
\end{align*}
 Therefore, we get
 \begin{equation}\label{gxi-lower-upper}
 	\begin{rcases}
 		\begin{aligned}
 			\underline{g}_x^i(v):=&\tfrac{1}{2}\left(\underline{\nabla_{gH} G_i(x)}+\overline{\nabla_{gH} G_i(x)}\right)^\top v-\tfrac{1}{2}\left({\overline{\nabla_{gH} G_i(x)}-\underline{\nabla_{gH} G_i(x)}}\right)^\top \left|v\right|\\
 			&+  \tfrac{1}{4} v^\top\left(\underline{\nabla_{gH}^2 G_i(x)}+\overline{\nabla_{gH}^2 G_i(x)}\right) v-\tfrac{1}{4}\left|v\right|^\top\left({\overline{\nabla_{gH}^2 G_i(x)}-\underline{\nabla_{gH}^2 G_i(x)}}\right) \left|v\right|  \\
 			\text{and }
 			\overline{g}_x^i(v)	:=&  \tfrac{1}{2}\left(\underline{\nabla_{gH} G_i(x)}+\overline{\nabla_{gH} G_i(x)}\right)^\top v+\tfrac{1}{2}\left({\overline{\nabla_{gH} G_i(x)}-\underline{\nabla_{gH} G_i(x)}}\right)^\top \left|v\right|\\
 			&+  \tfrac{1}{4} v^\top\left(\underline{\nabla_{gH}^2 G_i(x)}+\overline{\nabla_{gH}^2 G_i(x)}\right) v+\tfrac{1}{4}\left|v\right|^\top\left({\overline{\nabla_{gH}^2 G_i(x)}-\underline{\nabla_{gH}^2 G_i(x)}}\right) \left|v\right|.
 		\end{aligned}
 	\end{rcases}
 \end{equation}
 \medskip

 Let $\psi:{\mathbb{R}}^m\rightarrow {\mathbb{R}}$ be a function defined by \[\psi(w):=\underset{i=1,2,\ldots,m}{\max}w_i.\] Note that $\psi:{\mathbb{R}}^m\rightarrow {\mathbb{R}}$ is Lipschitz continuous with Lipschitz constant $1$. For a given $x\in U$, let $\phi_x:{\mathbb{R}}^n\rightarrow {\mathbb{R}}^m$ be a function defined by \[\phi_x(v):=\left(\overline{g}_x^1(v),\overline{g}_x^2(v),\ldots,\overline{g}_x^m(v)\right)^\top.\] Then, $\psi\circ \phi_x:{\mathbb{R}}^n\to {\mathbb{R}}$ is a function given by
 \begin{align}\label{psi-phi def}
 	\psi\circ \phi_x\left(v\right):=\underset{i=1,2,\ldots,m}{\max}\overline{g}_x^i\left(v\right).
 \end{align}
	\medskip
\begin{remark}
	\normalfont
	Note that for a given $x\in U$, $\underline{g}_x^i\left(v\right)\leq\overline{g}_x^i\left(v\right)$ for each $i=1,2,\ldots,m$. So, we have \[\psi\circ \phi_x\left(v\right)<0\implies \underset{i=1,2,\ldots,m}{\max}\overline{g}_x^i\left(v\right)<0\implies g_x^i\left(v\right)\prec {\bf 0} \text{ for all }i=1,2,\ldots,m.\]
	However, $\underset{i=1,2,\ldots,m}{\max}\underline{g}_x^i\left(v\right)<0$ does not imply that $g_x^i\left(v\right)\prec {\bf 0}$ for all $i=1,2,\ldots,m$. Due to this reason, we use $\overline{g}_x^i$ instead of $\underline{g}_x^i$ in \eqref{psi-phi def}.
\end{remark}
\medskip 
For $x\in U$, we define $v(x)$, the Newton direction at $x$, as the optimal solution of the unconstrained minimization problem 
\begin{align}\label{unconstrained min}
	\underset{v\in{\mathbb{R}}^n}{\min} \psi\circ\phi_x(v).
\end{align}
Observe that the objective function of the minimization problem \eqref{unconstrained min} is strongly convex. Therefore, the minimization problem \eqref{unconstrained min} has a unique optimal solution. Let $v(x)$ and $\xi(x)$ be the optimal solution and the optimal value of \eqref{unconstrained min} at the point $x$, i.e., 
\begin{align}\label{v(x) and xi(x)} 
	v(x):=\underset{v\in{\mathbb{R}}^n}{\argmin}\:  \psi\circ\phi_x(v) \text{ and } \xi(x):=\underset{v\in{\mathbb{R}}^n}{\min}\: \psi\circ\phi_x(v).
	\end{align}
	\medskip
\begin{remark}
	\normalfont
	If $G$ is a multiobjective real-valued function, then the lower and the upper boundary functions of the IVM $G_i$ are equal, i.e., $G_i=\underline{G}_i=\overline{G}_i$ for all $i=1,2,\ldots,m$. So, we get $\nabla G_i=\underline{\nabla_{gH} G_i}=\overline{\nabla_{gH} G_i}$ and $\nabla^2 G_i=\underline{\nabla_{gH}^2 G_i}=\overline{\nabla_{gH}^2 G_i}$ for all $i=1,2,\ldots,m$. Therefore, in such a particular case, the computation of $v(x)$ in \eqref{v(x) and xi(x)} reduces to 
	\begin{align}\label{Newton direction v(x) in real-valued multiobjective case }
		&\underset{v\in{\mathbb{R}}^n}{\argmin}\left(\underset{i=1,2,\ldots,m}{\max}\nabla G_i(x)^\top v+\tfrac{1}{2}v^\top \nabla^2 G_i(x)v\right),
	\end{align} 
	which is identical to the expression of Newton direction for real-valued multiobjective optimization in \cite{fliege2009newton}. So, \eqref{v(x) and xi(x)} is a true generalization of the conventional Newton method \cite{fliege2009newton} of multiobjective optimization problems.
\end{remark}
	
	\bigskip
	\noindent
Although the problem \eqref{unconstrained min} is a nonsmooth problem, to compute $v(x)$, we reformulate the problem \eqref{unconstrained min} as follows:
\begin{align*}
	 &\underset{v\in{\mathbb{R}}^n}{\min}\underset{i=1,2,\ldots,m}{\max}\overline{g}_x^i\left( v\right)\\
	~\equiv~
	&\underset{v\in{\mathbb{R}}^n}{\min}\biggl(\underset{i=1,2,\ldots,m}{\max}\biggl[\tfrac{1}{2}\left(\underline{\nabla_{gH} G_i(x)}+\overline{\nabla_{gH} G_i(x)}\right)^\top v+\tfrac{1}{2}\left({\overline{\nabla_{gH} G_i(x)}-\underline{\nabla_{gH} G_i(x)}}\right)^\top \left|v\right|\\
	&+  \tfrac{1}{4} v^\top\left(\underline{\nabla_{gH}^2 G_i(x)}+\overline{\nabla_{gH}^2 G_i(x)}\right) v+\tfrac{1}{4}\left|v\right|^\top\left({\overline{\nabla_{gH}^2 G_i(x)}-\underline{\nabla_{gH}^2 G_i(x)}}\right) \left|v\right|\biggr]\biggr)\\
	~\equiv~ &\underset{u,v\in{\mathbb{R}}^n}{\min}\biggl(\underset{i=1,2,\ldots,m}{\max}\biggl[\tfrac{1}{2}\left(\underline{\nabla_{gH} G_i(x)}+\overline{\nabla_{gH} G_i(x)}\right)^\top v+\tfrac{1}{2}\left({\overline{\nabla_{gH} G_i(x)}-\underline{\nabla_{gH} G_i(x)}}\right)^\top u\\
	&+  \tfrac{1}{4} v^\top\left(\underline{\nabla_{gH}^2 G_i(x)}+\overline{\nabla_{gH}^2 G_i(x)}\right) v+\tfrac{1}{4}u^\top\left({\overline{\nabla_{gH}^2 G_i(x)}-\underline{\nabla_{gH}^2 G_i(x)}}\right) u\biggr]\biggr)\\
	&\text{subject to } -u_j\leq v_j \leq u_j, j=1,2,\ldots n,
\end{align*}
that is, 
\begin{equation}\label{equivalent constrained problem}
	\begin{rcases}
\begin{aligned}
	\underset{u,v\in{\mathbb{R}}^n,\tau\in{\mathbb{R}}}{\min}&\tau\\
	 \text{subject to } &\tfrac{1}{2}\left(\underline{\nabla_{gH} G_i(x)}+\overline{\nabla_{gH} G_i(x)}\right)^\top v+\tfrac{1}{2}\left({\overline{\nabla_{gH} G_i(x)}-\underline{\nabla_{gH} G_i(x)}}\right)^\top u\\
	 &+  \tfrac{1}{4} v^\top\left(\underline{\nabla_{gH}^2 G_i(x)}+\overline{\nabla_{gH}^2 G_i(x)}\right) v+\tfrac{1}{4}u^\top\left({\overline{\nabla_{gH}^2 G_i(x)}-\underline{\nabla_{gH}^2 G_i(x)}}\right) u\leq\tau, i=1,2,\ldots m,\\
	&-u_j\leq v_j \leq u_j, j=1,2,\ldots n.
\end{aligned}
\end{rcases}
\end{equation}

\bigskip

\noindent
The computed Newton direction at $x$ by solving \eqref{equivalent constrained problem} has interrelation with Pareto critical points of the MIOP \eqref{minG(x)}. The following result also helps to identify a stopping condition for Pareto critical point by the value of $\left\|v(x)\right\|$ and $\xi(x)$. It indicates that if $\xi(x)=0$ or $\left\|v(x)\right\|=0$, then $x$ is a Pareto critical point. If, however, $\xi(x)\ne0$ or $\left\|v(x)\right\|\ne0$, then $v(x)$ is a descent direction of the objective function of the MIOP \eqref{minG(x)} at $x$.

\medskip
\begin{theorem}\label{descent direction finding theorem}
	Let $v(x)$ and $\xi(x)$ be the optimal solution and the optimal value of \eqref{unconstrained min} at the point $x$, i.e., $v(x):=\underset{v\in{\mathbb{R}}^n}{\argmin}\:  \psi\circ\phi_x(v)$ and $\xi(x):=\underset{v\in{\mathbb{R}}^n}{\min}  \psi\circ\phi_x(v)$. Then, 
	\begin{itemize}
		\item[(i)] For any $x\in U$, $\xi(x)\leq 0$.
		\item[(ii)] The following conditions are equivalent. 
		\begin{itemize}
			\item[a.] The point $x$ is non-Pareto critical.
			\item[b.] $\xi\left(x\right)<0$.
			\item[c.] $v\left(x\right)\neq 0$.
		\end{itemize}
		\item[(iii)] The mapping $x\mapsto v(x)$ is bounded on a compact subset of $U$.
		\item[(iv)] The mapping $x\mapsto \xi(x)$ is continuous in $U$.
	\end{itemize}
\end{theorem}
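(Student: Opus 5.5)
Throughout, write $\Theta_x(v):=\psi\circ\phi_x(v)=\max_i \overline{g}_x^i(v)$. The key structural facts, read off from \eqref{gxi-lower-upper}, are the following.

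First, $\overline{g}_x^i$ is positively homogeneous in pieces. For $t\ge0$,
\[
\overline{g}_x^i(tv)=t\,a_i(v)+t^2 b_i(v),
\]
where
\[
a_i(v)=\tfrac12(\underline{\nabla_{gH} G_i(x)}+\overline{\nabla_{gH} G_i(x)})^\top v+\tfrac12(\overline{\nabla_{gH} G_i(x)}-\underline{\nabla_{gH} G_i(x)})^\top|v|
\]
is the upper end of $\nabla_{gH}G_i(x)^\top\odot v$, and $b_i(v)$ is one half of the upper end of $v^\top\odot\nabla_{gH}^2G_i(x)\odot v$.

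Second, positive definiteness $\nabla_{gH}^2G_i(x)\succ\mathbf 0$ gives $b_i(v)>0$ for $v\ne0$. Since $b_i$ is continuous and positively $2$-homogeneous, compactness of the unit sphere yields $b_i(v)\ge \tfrac{\mu(x)}{2}\|v\|^2$ with $\mu(x)>0$.

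\textbf{Part (i).} This is immediate from $\Theta_x(0)=0$, so $\xi(x)\le\Theta_x(0)=0$.

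\textbf{Part (ii).} For (b)$\Rightarrow$(c): if $v(x)=0$ then $\xi(x)=\Theta_x(0)=0$. For (c)$\Rightarrow$(b): uniqueness of the minimizer (strong convexity, as noted before \eqref{v(x) and xi(x)}) gives $\xi(x)=\Theta_x(v(x))<\Theta_x(0)=0$ whenever $v(x)\neq0$.

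For (a)$\Rightarrow$(b): non-criticality gives $v$ with $\nabla_{gH}G_i(x)^\top\odot v\prec\mathbf 0$ for all $i$. I must ensure the upper end $a_i(v)$ is $<0$. The relation $\prec$ only gives $a_i(v)\le 0$ with the lower end $<0$ in the boundary case, so I will handle that by noting that Pareto criticality in \cite{mondal2025steepest} is equivalent to $\max_i a_i(v)\ge0$ for all $v$ (the definition used there). If needed, I will perturb $v$, using convexity of $a_i$ (a linear term plus a nonnegative multiple of $|v|$) to argue that the strict version fails only at critical points. With $a_i(v)<0$ for all $i$, I get $\Theta_x(tv)\le t\max_i a_i(v)+t^2\max_i b_i(v)<0$ for small $t>0$, hence $\xi(x)<0$.

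For (b)$\Rightarrow$(a): $\xi(x)<0$ gives $\overline{g}_x^i(v(x))<0$. Since $b_i\ge0$, this forces $a_i(v(x))<0$, so the upper end of $\nabla_{gH}G_i(x)^\top\odot v(x)$ is negative and hence $\nabla_{gH}G_i(x)^\top\odot v(x)\prec\mathbf0$ for all $i$. Thus $x$ is not critical.

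\textbf{Part (iii).} On a compact $C\subset U$, continuity of the $gH$-gradient and $gH$-Hessian (hence of the boundary matrices and vectors) gives $\|a_i\text{-coefficients}\|\le M$. Positive definiteness, together with continuity of $(x,v)\mapsto b_i(v)$ on $C\times$ sphere, gives a uniform $\mu>0$. Then $0\ge\xi(x)=\Theta_x(v(x))\ge -2M\|v(x)\|+\tfrac{\mu}{2}\|v(x)\|^2$, so $\|v(x)\|\le 4M/\mu$.

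\textbf{Part (iv).} This is the main obstacle. I will show continuity at $x^0$ using a compact ball $B\subset U$ around it, with (iii) giving $\|v(x)\|\le R$ on $B$. The map $(x,v)\mapsto\Theta_x(v)$ is continuous, hence uniformly continuous on $B\times\bar B_R(0)$, because $\psi$ is $1$-Lipschitz and the coefficients are continuous. Then
\[
\xi(x)\le\Theta_x(v(x^0))\to\xi(x^0),
\qquad
\xi(x^0)\le\Theta_{x^0}(v(x))\le\Theta_x(v(x))+\varepsilon=\xi(x)+\varepsilon
\]
for $x$ near $x^0$, and combining the two gives $\xi(x)\to\xi(x^0)$.
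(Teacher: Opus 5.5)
\textbf{Gap in (a)$\Rightarrow$(b).} Your proposed fix cannot work, because with the definitions as stated in this paper the implication is false.

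By Definition~\ref{Dominance definition}(ii), $S\prec\mathbf 0$ only means $\overline{s}\le 0$ and $\underline{s}<0$. A $v$ with $a_i(v)=0$ for some $i$ therefore does not guarantee any $w$ with $a_i(w)<0$ for all $i$. Here is a counterexample.

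Take $n=m=1$, $U=\mathbb{R}$, and $G_1(x)=[x^2-x,\,2x^2+10]$. This is a valid interval because $x^2-x\le 2x^2+10$ for all $x$. It satisfies all standing assumptions, since $\nabla_{gH}^2G_1\equiv[2,4]\succ\mathbf 0$.

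At $x=0$ we have $\nabla_{gH}G_1(0)=[-1,0]$. With $v=1$ we get $\nabla_{gH}G_1(0)\odot v=[-1,0]\prec\mathbf 0$. So $0$ is non-Pareto critical in the sense of Definition~\ref{Pareto critical}.

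However, $a_1(w)=\tfrac12(-w+|w|)=\max\{-w,0\}\ge 0$ for every $w$. Hence
\[
\psi\circ\phi_0(w)=\max\{-w,0\}+2w^2\ge 0=\psi\circ\phi_0(0),
\]
so $\xi(0)=0$ and $v(0)=0$. Since $a_1$ is sublinear and nonnegative everywhere, no perturbation of $v$ can produce $a_1(w)<0$. Appealing to an ``equivalent'' definition from the cited reference does not help either, because under this paper's $\prec$ that equivalence is exactly what fails.

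In this example $0$ is even Pareto optimal: $\overline{G}_1(x)>10=\overline{G}_1(0)$ for $x\neq 0$. So the problem lies in the $\prec$-based definition of criticality, not in $\xi$.

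\textbf{The paper's proof has the same gap.} It makes exactly this leap, asserting $\max_i a_i(v)<0$ directly from $\nabla_{gH}G_i(x)^\top\odot v\prec\mathbf 0$. The honest repair is a change of reading, not a lemma. Equivalence (a)$\Leftrightarrow$(b) holds if Pareto criticality means there is no $v$ whose upper endpoint $a_i(v)$ is negative for all $i$, i.e.\ $\max_i a_i(v)\ge 0$ for all $v$. Under that reading your estimate $\psi\circ\phi_x(tv)\le t\max_i a_i(v)+t^2\max_i b_i(v)$ finishes the argument as written. You should state this reading explicitly rather than attribute it to the reference. Otherwise, restrict (ii) to (b)$\Leftrightarrow$(c)$\Rightarrow$(a).

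\textbf{The rest is sound and follows the paper's route.}
\begin{itemize}
\item Part (i) is identical to the paper.
\item Your (b)$\Rightarrow$(a) via $b_i\ge 0$ and (c)$\Rightarrow$(b) via uniqueness of the minimizer are correct. They are tidier than the paper's (c)$\Rightarrow$(a), which silently uses $\xi(x)<0$.
\item Part (iii) is the paper's bound, with your uniform $\mu$ replacing the minimum eigenvalue of $\underline{\nabla_{gH}^2G_i}+\overline{\nabla_{gH}^2G_i}$. You also make the uniformity over the compact set explicit, which the paper leaves implicit.
\item Part (iv) is the paper's limsup/liminf argument.
\end{itemize}
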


\medskip
\begin{proof}
 For any $x\in U$, we have
	\begin{align*}
		\xi(x)&\leq \psi\circ\phi_x(0)+\tfrac{1}{2}\|0\|^2=\underset{i=1,2,\ldots,m}{\max}\overline{g}_x^i\left( 0\right)=0.
	\end{align*}
	Therefore, item (i) is true.
	
	\medskip
	\noindent To prove item (ii), we will show that (a) implies (b), (b) implies (c), and (c) implies (a).
	 To show (a) implies (b), let $x$ be a non-Pareto critical point of the MIOP \eqref{minG(x)}. Then, there exists a $v\in{\mathbb{R}}^n$ such that $\nabla_{gH}G_i(x)^\top\odot v \prec {\bf 0}$ for all i=1,2,$\ldots,m.$ So, we have 
	 \[\underset{i=1,2,\ldots,m}{\max}\left[\tfrac{1}{2}\left(\underline{\nabla_{gH} G_i(x)}+\overline{\nabla_{gH} G_i(x)}\right)^\top v+\tfrac{1}{2}\left({\overline{\nabla_{gH} G_i(x)}-\underline{\nabla_{gH} G_i(x)}}\right)^\top \left|v\right|\right]< 0.\] 
	 For all $\theta>0$, we get 
	 \begin{align*}
	 	\xi(x)~\leq~ & \psi\circ\phi_x(\theta v)\\
	 	~=~&\underset{i=1,2,\ldots,m}{\max}\biggl[\tfrac{1}{2}\left(\underline{\nabla_{gH} G_i(x)}+\overline{\nabla_{gH} G_i(x)}\right)^\top \left(\theta v\right)+\tfrac{1}{2}\left({\overline{\nabla_{gH} G_i(x)}-\underline{\nabla_{gH} G_i(x)}}\right)^\top \left|\theta v\right|\\
	 	&+  \tfrac{1}{4} \left(\theta v\right)^\top\left(\underline{\nabla_{gH}^2 G_i(x)}+\overline{\nabla_{gH}^2 G_i(x)}\right) \left(\theta v\right)+\tfrac{1}{4}\left|\theta v\right|^\top\left({\overline{\nabla_{gH}^2 G_i(x)}-\underline{\nabla_{gH}^2 G_i(x)}}\right) \left|\theta v\right|\biggr] \\
	 	~=~&\theta\biggl(\underset{i=1,2,\ldots,m}{\max}\biggl[\tfrac{1}{2}\left(\underline{\nabla_{gH} G_i(x)}+\overline{\nabla_{gH} G_i(x)}\right)^\top v+\tfrac{1}{2}\left({\overline{\nabla_{gH} G_i(x)}-\underline{\nabla_{gH} G_i(x)}}\right)^\top \left| v\right|\\
	 	&+ \theta\left\{\tfrac{1}{4} v^\top\left(\underline{\nabla_{gH}^2 G_i(x)}+\overline{\nabla_{gH}^2 G_i(x)}\right) v+\tfrac{1}{4}\left| v\right|^\top\left({\overline{\nabla_{gH}^2 G_i(x)}-\underline{\nabla_{gH}^2 G_i(x)}}\right) \left| v\right|\right\}\biggr]\biggr).
	 \end{align*}
	Let us choose $\theta$ that satisfies the following condition \[0<\theta<-\tfrac{\tfrac{1}{2}\left(\underline{\nabla_{gH} G_i(x)}+\overline{\nabla_{gH} G_i(x)}\right)^\top v+\tfrac{1}{2}\left({\overline{\nabla_{gH} G_i(x)}-\underline{\nabla_{gH} G_i(x)}}\right)^\top \left| v\right|}{\tfrac{1}{4} v^\top\left(\underline{\nabla_{gH}^2 G_i(x)}+\overline{\nabla_{gH}^2 G_i(x)}\right) v+\tfrac{1}{4}\left| v\right|^\top\left({\overline{\nabla_{gH}^2 G_i(x)}-\underline{\nabla_{gH}^2 G_i(x)}}\right) \left| v\right|} \text{ for all }i=1,2,\ldots,m.\]
	Therefore, we get $\xi(x)<0.$ 
	
	 \medskip
	 \noindent
	 To show (b) implies (c), let $\xi(x)<0.$ If possible, let $v(x)=0.$ This implies $\xi(x)=0,$ which contradicts $\xi(x)<0.$ Therefore, $v(x)\neq 0.$
	 
	 \medskip
	 \noindent
	 Let us now show (c) implies (a). Since for all $i=1,2,\ldots,m$, $\nabla_{gH}^2 G_i(x)\succ {\bf 0}$, we have 
	 \begin{align*}
	 	&\tfrac{1}{2}\left(\underline{\nabla_{gH} G_i(x)}+\overline{\nabla_{gH} G_i(x)}\right)^\top v(x)+\tfrac{1}{2}\left({\overline{\nabla_{gH} G_i(x)}-\underline{\nabla_{gH} G_i(x)}}\right)^\top \left| v(x)\right|\\
	 	~<~&\tfrac{1}{2}\left(\underline{\nabla_{gH} G_i(x)}+\overline{\nabla_{gH} G_i(x)}\right)^\top v(x)+\tfrac{1}{2}\left({\overline{\nabla_{gH} G_i(x)}-\underline{\nabla_{gH} G_i(x)}}\right)^\top \left| v(x)\right|\\
	 	&+\tfrac{1}{4} v(x)^\top\left(\underline{\nabla_{gH}^2 G_i(x)}+\overline{\nabla_{gH}^2 G_i(x)}\right) v(x)+\tfrac{1}{4}\left| v(x)\right|^\top\left({\overline{\nabla_{gH}^2 G_i(x)}-\underline{\nabla_{gH}^2 G_i(x)}}\right) \left| v(x)\right|=\xi(x)<0.
	 \end{align*}
	 Therefore, we get $\nabla_{gH}G_i(x)^\top \odot v(x)\prec {\bf 0}$ for all $i=1,2,\ldots,m$. This implies $v(x)$ is a descent direction, and consequently, $x$ is a non-Pareto critical point.
	
	\medskip
	\noindent
	To prove item (iii), let $\mathcal{S}\subset U$ be compact. For all $x\in\mathcal{S}$ and for all $i=1,2,\ldots,m$, we have
	\begin{align*}
		 &-\tfrac{1}{2}\left\lVert\underline{\nabla_{gH} G_i(x)}+\overline{\nabla_{gH} G_i(x)}\right\rVert\|v(x)\|+\tfrac{\lambda_{\min}}{4}\|v(x)\|^2\\
		 ~\leq~ & \tfrac{1}{2}\left(\underline{\nabla_{gH} G_i(x)}+\overline{\nabla_{gH} G_i(x)}\right)^\top v(x)+\tfrac{1}{4} v(x)^\top\left(\underline{\nabla_{gH}^2 G_i(x)}+\overline{\nabla_{gH}^2 G_i(x)}\right) v(x)\\
		~\leq~ & \tfrac{1}{2}\left(\underline{\nabla_{gH} G_i(x)}+\overline{\nabla_{gH} G_i(x)}\right)^\top v(x)+\tfrac{1}{2}\left({\overline{\nabla_{gH} G_i(x)}-\underline{\nabla_{gH} G_i(x)}}\right)^\top \left| v(x)\right|\\
		&+\tfrac{1}{4} v(x)^\top\left(\underline{\nabla_{gH}^2 G_i(x)}+\overline{\nabla_{gH}^2 G_i(x)}\right) v(x)+\tfrac{1}{4}\left| v(x)\right|^\top\left({\overline{\nabla_{gH}^2 G_i(x)}-\underline{\nabla_{gH}^2 G_i(x)}}\right) \left| v(x)\right|=\xi(x)\leq0,
	\end{align*}
	 where $\lambda_{\min}$ is the minimum eigenvalue of $\underline{\nabla_{gH}^2 G_i(x)}+\overline{\nabla_{gH}^2 G_i(x)}$.
	 Therefore, we get 
	 \begin{align}\label{v bounded}
	 	\|v(x)\|\leq \tfrac{2}{\lambda_{\min}}\left\lVert\underline{\nabla_{gH}G_i(x)}+\overline{\nabla_{gH} G_i(x)}\right\rVert\leq \tfrac{2}{\lambda_{\min}}\left( \left\lVert\underline{\nabla_{gH}G_i(x)}\right\rVert+\left\lVert\overline{\nabla_{gH} G_i(x)}\right\rVert\right).
	 \end{align}
	  Since $G_i$ is a $gH$-continuously differentiable IVM, $\nabla_{gH}G_i$ is $gH$-continuous. Then, by Lemma \ref{convexity, continuity with boundary functions}, its boundary functions  $\underline{\nabla_{gH}G_i}$ and $\overline{\nabla_{gH}G_i}$ are continuous on the compact set $\mathcal{S}$. So, $\underline{\nabla_{gH}G_i}$ and $\overline{\nabla_{gH}G_i}$ are bounded on the compact set $\mathcal{S}$. Therefore, there exists an $M>0$ such that \[\|v(x)\|\leq M \text{ for all }x \in \mathcal{S}.\]
	   Hence, $v$ is bounded on the compact set $\mathcal{S}$.
	 
	 \medskip
	 \noindent
	  To prove item (iv), let $\tilde{x}\in U$ be arbitrary and $\left\{x^k\right\}$ be a sequence such that $x^k\to \tilde{x}$ as $k\to\infty$. We will prove that $\underset{k\to\infty}{\lim}\xi\left(x^k\right)=\xi(\tilde{x})$.
	  By the optimality of $v\left(x^k\right)$, we have for all $k$,
	 \begin{align*}
	 	\xi\left(x^k\right)=&\:\psi\circ\phi_{x^k}\left( v\left(x^k\right)\right)\leq \psi\circ\phi_{x^k}\left( v(\tilde{x})\right)\\
	 	\overset{\eqref{psi-phi def}}{\implies}\xi\left(x^k\right)\leq&\:
	 	\underset{i=1,2,\ldots,m}{\max} \overline{g}_{x^k}^i\left(v\left(\tilde{x}\right)\right)\\
	 	\overset{\eqref{gxi-lower-upper}}{\implies}  \xi\left(x^k\right)\leq &\: \underset{i=1,2,\ldots,m}{\max}\biggl(\tfrac{1}{2}\left(\underline{\nabla_{gH} G_i\left(x^k\right)}+\overline{\nabla_{gH} G_i\left(x^k\right)}\right)^\top v\left(\tilde{x}\right)\\
	 	&+\tfrac{1}{2}\left({\overline{\nabla_{gH} G_i\left(x^k\right)}-\underline{\nabla_{gH} G_i\left(x^k\right)}}\right)^\top \left|v\left(\tilde{x}\right)\right|\\
	 	&+\tfrac{1}{4} v\left(\tilde{x}\right)^\top\left(\underline{\nabla_{gH}^2 G_i\left(x^k\right)}+\overline{\nabla_{gH}^2 G_i\left(x^k\right)}\right) v\left(\tilde{x}\right)\\
	 	&+\tfrac{1}{4} \left|v\left(\tilde{x}\right)\right|^\top\left({\overline{\nabla_{gH}^2 G_i\left(x^k\right)}-\underline{\nabla_{gH}^2 G_i\left(x^k\right)}}\right) \left|v\left(\tilde{x}\right)\right|\biggr).
	 \end{align*}
	 Since $G\in C_{gH}^2\left(U,I\left({\mathbb{R}}\right)^m\right)$ and $x^k\to \tilde{x}$, for all $i=1,2,\ldots,m$, we get 
	 \begin{equation}\label{grad and hess continuity result}
	 	\begin{rcases}
	 		\begin{aligned}
	 			&\underline{\nabla_{gH} G_i\left(x^k\right)}\to\underline{\nabla_{gH} G_i\left(\tilde{x}\right)} \text{ and }\overline{\nabla_{gH} G_i\left(x^k\right)}\to\overline{\nabla_{gH} G_i\left(\tilde{x}\right)}\\
	 			&\underline{\nabla_{gH}^2 G_i\left(x^k\right)}\to\underline{\nabla_{gH}^2 G_i\left(\tilde{x}\right)} \text{ and }\overline{\nabla_{gH}^2 G_i\left(x^k\right)}\to\overline{\nabla_{gH}^2 G_i\left(\tilde{x}\right)}
	 		\end{aligned}
	 	\end{rcases}
	 	\text{ as } k\to\infty.
	 \end{equation}
	 Therefore, we get 
	  \begin{align*}
	 	\underset{k\to\infty}{\lim\sup }\:\xi\left(x^k\right)\leq &\: \underset{i=1,2,\ldots,m}{\max}\biggl(\tfrac{1}{2}\left(\underline{\nabla_{gH} G_i\left(\tilde{x}\right)}+\overline{\nabla_{gH} G_i\left(\tilde{x}\right)}\right)^\top v\left(\tilde{x}\right)\\
	 	&+\tfrac{1}{2}\left({\overline{\nabla_{gH} G_i\left(\tilde{x}\right)}-\underline{\nabla_{gH} G_i\left(\tilde{x}\right)}}\right)^\top \left|v\left(\tilde{x}\right)\right|\\
	 	&+\tfrac{1}{4} v\left(\tilde{x}\right)^\top\left(\underline{\nabla_{gH}^2 G_i\left(\tilde{x}\right)}+\overline{\nabla_{gH}^2 G_i\left(\tilde{x}\right)}\right) v\left(\tilde{x}\right)\\
	 	&+\tfrac{1}{4} \left|v\left(\tilde{x}\right)\right|^\top\left({\overline{\nabla_{gH}^2 G_i\left(\tilde{x}\right)}-\underline{\nabla_{gH}^2 G_i\left(\tilde{x}\right)}}\right) \left|v\left(\tilde{x}\right)\right|
	 	\biggr),
	 \end{align*}
 which implies
	 \begin{align}\label{proof of cont eq1}
	 	\underset{k\to\infty}{\lim\sup }\:\xi\left(x^k\right)\leq \psi\circ\phi_{\tilde{x}}\left( v(\tilde{x})\right)=\xi(\tilde{x}).
	 \end{align}
	 On the other hand, we have 
	 \begin{align*}
	 \xi(\tilde{x})=\underset{v\in{\mathbb{R}}^n}{\min} \psi\circ\phi_{\tilde{x}}(v)
	 \leq \psi\circ\phi_{\tilde{x}}\left(v\left(x^k\right)\right).
	\end{align*}
	Therefore, we get 
	\begin{align*}
		\xi(\tilde{x})&\leq \underset{k\to\infty}{\lim\inf }\: \psi\circ\phi_{\tilde{x}}\left(v\left(x^k\right)\right)= \underset{k\to\infty}{\lim\inf }\:\left[ \psi\circ\phi_{\tilde{x}}\left(v\left(x^k\right)\right)+\psi\circ\phi_{x^k}\left(v\left(x^k\right)\right)-\psi\circ\phi_{x^k}\left(v\left(x^k\right)\right)\right]\\
		&= \underset{k\to\infty}{\lim\inf }\:\left[\xi\left(x^k\right)+ \psi\circ\phi_{\tilde{x}}\left(v\left(x^k\right)\right)-\psi\circ\phi_{x^k}\left(v\left(x^k\right)\right)\right]\\
		&\leq \underset{k\to\infty}{\lim\inf }\:\left(\xi\left(x^k\right)+\left\lVert\phi_{x^k}\left(v\left(x^k\right)\right)-\phi_{\tilde{x}}\left(v\left(x^k\right)\right)\right\rVert\right) \quad\left[\text{Due to Lipschitz continuity of } \psi\right]\\
	\end{align*}
	We have \[\phi_{x^k}\left(v\left(x^k\right)\right)=\left(\overline{g}_{x^k}^1\left(v\left(x^k\right)\right),\overline{g}_{x^k}^2\left(v\left(x^k\right)\right),\ldots,\overline{g}_{x^k}^m\left(v\left(x^k\right)\right)\right)^\top\]
	and 
	\[\phi_{\tilde{x}}\left(v\left(x^k\right)\right)=\left(\overline{g}_{\tilde{x}}^1\left(v\left(x^k\right)\right),\overline{g}_{\tilde{x}}^2\left(v\left(x^k\right)\right),\ldots,\overline{g}_{\tilde{x}}^m\left(v\left(x^k\right)\right)\right)^\top.\]
	For all $i=1,2,\ldots,m$, from \eqref{gxi-lower-upper} we have that
	\begin{align*}
		\overline{g}_{x^k}^i\left(v\left(x^k\right)\right)~=~&\tfrac{1}{2}\left(\underline{\nabla_{gH} G_i\left(x^k\right)}+\overline{\nabla_{gH} G_i\left(x^k\right)}\right)^\top v\left(x^k\right)\\
		&+\tfrac{1}{2}\left({\overline{\nabla_{gH} G_i\left(x^k\right)}-\underline{\nabla_{gH} G_i\left(x^k\right)}}\right)^\top \left|v\left(x^k\right)\right|\\
		&+\tfrac{1}{4} v\left(x^k\right)^\top\left(\underline{\nabla_{gH}^2 G_i\left(x^k\right)}+\overline{\nabla_{gH}^2 G_i\left(x^k\right)}\right) v\left(x^k\right)\\
		&+\tfrac{1}{4} \left|v\left(x^k\right)\right|^\top\left({\overline{\nabla_{gH}^2 G_i\left(x^k\right)}-\underline{\nabla_{gH}^2 G_i\left(x^k\right)}}\right) \left|v\left(x^k\right)\right|
	\end{align*}
	and 
	\begin{align*}
		\overline{g}_{\tilde{x}}^i\left(v\left(x^k\right)\right)~=~&\tfrac{1}{2}\left(\underline{\nabla_{gH} G_i\left(\tilde{x}\right)}+\overline{\nabla_{gH} G_i\left(\tilde{x}\right)}\right)^\top v\left(x^k\right)\\
		&+\tfrac{1}{2}\left({\overline{\nabla_{gH} G_i\left(\tilde{x}\right)}-\underline{\nabla_{gH} G_i\left(\tilde{x}\right)}}\right)^\top \left|v\left(x^k\right)\right|\\
		&+\tfrac{1}{4} v\left(x^k\right)^\top\left(\underline{\nabla_{gH}^2 G_i\left(\tilde{x}\right)}+\overline{\nabla_{gH}^2 G_i\left(\tilde{x}\right)}\right) v\left(x^k\right)\\
		&+\tfrac{1}{4} \left|v\left(x^k\right)\right|^\top\left({\overline{\nabla_{gH}^2 G_i\left(\tilde{x}\right)}-\underline{\nabla_{gH}^2 G_i\left(\tilde{x}\right)}}\right) \left|v\left(x^k\right)\right|.
	\end{align*}
	From \eqref{v bounded}, we get $\|v(x)\|\leq \tfrac{2}{\lambda_{\min}}\left( \left\lVert\underline{\nabla_{gH}G_i(x)}\right\rVert+\left\lVert\overline{\nabla_{gH} G_i(x)}\right\rVert\right)$.
	Since $\underline{\nabla_{gH}G_i}$ and $\overline{\nabla_{gH}G_i}$ are continuous and $x^k\to \tilde{x}$ as $k\to\infty$, we conclude that $\left\{v\left(x^k\right)\right\}$ is bounded. Then, letting $k\to\infty$, using \eqref{grad and hess continuity result}, we get $\overline{g}_{x^k}^i\left(v\left(x^k\right)\right)-\overline{g}_{\tilde{x}}^i\left(v\left(x^k\right)\right)\to 0$. Consequently, we get $\phi_{x^k}\left(v\left(x^k\right)\right)-\phi_{\tilde{x}}\left(v\left(x^k\right)\right)\to 0$ as $k\to\infty$. Therefore, we obtain that 
	\begin{align}\label{proof of cont eq2}
		\xi(\tilde{x})\leq \underset{k\to\infty}{\lim\inf }\:\xi\left(x^k\right).
	\end{align}
	From \eqref{proof of cont eq1} and \eqref{proof of cont eq2}, we get $\underset{k\to\infty}{\lim\sup }\:\xi\left(x^k\right)\leq\xi(\tilde{x})\leq \underset{k\to\infty}{\lim\inf }\:\xi\left(x^k\right)$, and this implies that $\xi$ is continuous.
\end{proof}

\medskip
\subsection{Step Length}
Suppose that we have a direction vector $v\in{\mathbb{R}}^n$ such that $\nabla_{gH} G_i(x)^\top\odot v\prec{\bf 0}$ for all $i=1,2,\ldots,m$. To compute the step length $t>0$, we use the Armijo-like rule. Let $\sigma\in (0,1)$ be a predefined constant. The Armijo-like condition for $t$ to be acceptable is 
\[G_i(x+tv)\preceq G_i(x)\oplus\left[\sigma t,\sigma t\right]\odot \xi(x) \text{ for all }i=1,2,\ldots,m.\] 
More precisely, the acceptance condition of $t$ is 
\begin{equation}\label{Armijo condition}
	\begin{rcases}
\begin{aligned}
	&\underline{G}_i(x+tv)\leq \underline{G}_i(x)+\sigma t \:\xi(x)\\
	\text{and }
	&\overline{G}_i(x+tv)\leq \overline{G}_i(x)+\sigma t \: \xi(x) \text{ for all }i=1,2,\ldots,m.
\end{aligned}
\end{rcases}
\end{equation}
 For computation of the step length $t$, we initially set $t=1$, and while the conditions given in \eqref{Armijo condition} are not satisfied, we set $t:=\eta t,$ where $\eta\in(0,1)$ is a reduction factor. Now, we proceed the existence guarantee of the step length in the following result.
 
 \medskip
 \begin{theorem}\label{steplength theorem}
 	Assume that $G\in C_{gH}^2\left(U,I\left({\mathbb{R}}\right)^m\right)$ and $\sigma\in(0,1)$. If $\nabla_{gH} G_i(x)^\top\odot v\prec{\bf 0}$ and $\nabla_{gH}^2 G_i(x)\succ {\bf 0}$ for all $i=1,2,\ldots,m$, then there exists a $\delta>0$ such that for all $i=1,2,\ldots,m$, \[G_i(x+tv)\prec G_i(x)\oplus\left[\sigma t,\sigma t\right]\odot \xi(x)\text{ for any }t\in(0,\delta].\]
 \end{theorem}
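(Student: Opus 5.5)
The plan is to read the statement with $v=v(x)$, the Newton direction from \eqref{v(x) and xi(x)}, at a non-Pareto critical point $x$. By Theorem \ref{descent direction finding theorem}(ii) we then have $\xi(x)<0$ and $v\neq 0$. The reason for fixing $v$ this way is that the right-hand side involves $\xi(x)$, which is tied to $v(x)$. For an arbitrary descent direction $v$ the inequality can fail when $\|v\|$ is small compared with $|\xi(x)|$, and I would state this restriction explicitly. Since $\preceq$ and $\prec$ are defined componentwise through the boundary functions, it suffices to show the following. For each $i$ and each $f\in\{\underline{G}_i,\overline{G}_i\}$ there is a $\delta_{i,f}>0$ such that $f(x+tv)<f(x)+\sigma t\,\xi(x)$ for all $t\in(0,\delta_{i,f}]$. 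Having both boundary inequalities strict gives $G_i(x+tv)\prec G_i(x)\oplus[\sigma t,\sigma t]\odot\xi(x)$, and then $\delta:=\min_{i,f}\delta_{i,f}$ works.

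\emph{Step 1 (first-order term).} Each partial derivative $\partial f/\partial x_r(x)$ lies in the interval $\nabla_{gH}G_i(x)_r$, so $\tfrac{\partial f}{\partial x_r}(x)\,v_r\le \max\{\underline{\nabla_{gH}G_i(x)_r}v_r,\overline{\nabla_{gH}G_i(x)_r}v_r\}$. Summing over $r$ gives
\[\nabla f(x)^\top v\le \tfrac12\left(\underline{\nabla_{gH} G_i(x)}+\overline{\nabla_{gH} G_i(x)}\right)^\top v+\tfrac12\left(\overline{\nabla_{gH} G_i(x)}-\underline{\nabla_{gH} G_i(x)}\right)^\top|v| .\]
Let $Q_i(v)$ be the quadratic part of $\overline{g}_x^i(v)$ in \eqref{gxi-lower-upper}. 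Then $\overline{g}_x^i(v)\le\xi(x)$ yields $\nabla f(x)^\top v\le \xi(x)-Q_i(v)$. Positive definiteness of $\nabla^2_{gH}G_i(x)$ gives $Q_i(v)>0$ for $v\ne0$, so in particular $\nabla f(x)^\top v-\sigma\xi(x)\le(1-\sigma)\xi(x)<0$.

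\emph{Step 2 (second-order remainder).} $G\in C^2_{gH}$ and Lemma \ref{convexity, continuity with boundary functions} give that $f$ is $C^2$ near $x$. Taylor's theorem with mean-value remainder gives $f(x+tv)=f(x)+t\nabla f(x)^\top v+\tfrac{t^2}{2}v^\top\nabla^2 f(x+\theta tv)v$ for some $\theta\in(0,1)$. On a closed ball $B\subset U$ around $x$, continuity of the boundary Hessians gives $\|\nabla^2 f\|\le K$. Then for $t$ small enough that $x+tv\in B$,
\[f(x+tv)-f(x)-\sigma t\xi(x)\le t\Big[(1-\sigma)\xi(x)+\tfrac{t}{2}K\|v\|^2\Big].\]
This is strictly negative once $t<2(1-\sigma)|\xi(x)|/(K\|v\|^2)$. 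That bound, together with the ball constraint, defines $\delta_{i,f}$.

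The only delicate point is Step 1, where the interval gradient information in $\overline{g}^i_x$ must dominate the actual derivatives of both boundary functions. Everything else is a routine compactness and Taylor estimate. If the mean-value form is inconvenient for an only $gH$-differentiable setting, I would instead use the first-order expansion from Lemma \ref{linear IVM lemma} applied to the boundary functions, with an $o(t)$ remainder. In that version Step 1 still supplies the strict margin $(1-\sigma)\xi(x)<0$ that absorbs the remainder.
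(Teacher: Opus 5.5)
Your proof is correct, and restricting to $v=v(x)$ at a non-Pareto critical point is exactly right. As literally stated, for an arbitrary descent $v$, the claim is false: for $G_1(x)=[x^2,x^2]$ at $x=1$ one has $\xi(1)=-1$, and $v=-0.01$, $\sigma=\tfrac12$ violate it for every $t>0$.

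The paper's proof needs the same restriction but leaves it implicit. It takes the first-order $gH$ limit of Lemma \ref{linear IVM lemma} to get $G_i(x+tv)\ominus_{gH}G_i(x)\prec\sigma t\odot\nabla_{gH}G_i(x)^\top\odot v$ for small $t$. It then adds the positive quadratic term to reach $\sigma t\odot g_x^i(v)$ and compares with $[\sigma t,\sigma t]\odot\xi(x)$. That last comparison uses $\overline{g}_x^i(v)\le\xi(x)$, i.e.\ your Step 1 inequality. So the key estimate is shared; what differs is the expansion and the bookkeeping.

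\emph{The paper's route} works at the interval level with a first-order expansion. It needs no second-order regularity of the boundary functions near $x$, but it gives no explicit $\delta$.

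\emph{Your route} applies second-order Taylor to each boundary function. This gives the explicit threshold $t<2(1-\sigma)|\xi(x)|/(K\|v\|^2)$, which is a usable lower bound on accepted Armijo steps. Because you compare directly with the degenerate interval $[\sigma t\xi(x),\sigma t\xi(x)]$, you also avoid the paper's intermediate implication $A\ominus_{gH}B\prec C\Rightarrow A\prec B\oplus C$. That implication is not valid for nondegenerate $C$; it fails, e.g., when $\underline{G}_i$ decreases along $v$ much more slowly than $\overline{G}_i$.

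One justification should be fixed. Lemma \ref{convexity, continuity with boundary functions} applied to $\nabla^2_{gH}G_i$ only gives continuity of the entrywise min/max $\underline{\nabla^2_{gH}G_i}$ and $\overline{\nabla^2_{gH}G_i}$. It does not show that $\underline{G}_i,\overline{G}_i$ are $C^2$. This still yields your bound $K$, because each entry of $\nabla^2 f(y)$ is sandwiched between these two continuous functions. The existence of $\nabla^2 f$ is itself only implicit in the paper's definition of the $gH$-Hessian.

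Your first-order fallback avoids this issue entirely. Let $\overline{D}_i$ be the upper endpoint of $\nabla_{gH}G_i(x)^\top\odot v$. Reading off upper endpoints in Lemma \ref{linear IVM lemma} gives $\max\{\underline{G}_i(x+tv)-\underline{G}_i(x),\,\overline{G}_i(x+tv)-\overline{G}_i(x)\}=t\,\overline{D}_i+o(t)$. Since $\overline{D}_i\le\xi(x)<\sigma\xi(x)$, the result follows.

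The restricted form is also all the paper later needs. In Case 2 of Theorem \ref{Convergence theorem}, if $\xi(\bar x)<0$ then \eqref{convergence proof eq1} makes $\bar v$ a minimizer of $\psi\circ\phi_{\bar x}$, hence $\bar v=v(\bar x)$ by uniqueness.
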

 
 \medskip
 \begin{proof}
 Since $G\in C_{gH}^2\left(U,I\left({\mathbb{R}}\right)^m\right)$, $\sigma\in(0,1)$, and $\nabla_{gH} G_i(x)^\top\odot v\prec{\bf 0}$, from Lemma \ref{linear IVM lemma}, we have \[\underset{t\to0}{\lim} \tfrac{1}{t}\odot\left[G_i(x+tv)\ominus_{gH} G_i(x)\right]:=\nabla_{gH} G_i(x)^\top\odot v\prec \sigma\odot\nabla_{gH} G_i(x)^\top\odot v \text{ for all } i=1,2,\ldots,m.\]
 Therefore, there exists a $\delta>0$ such that for all $t\in(0,\delta]$ and for all $i=1,2,\ldots,m$, we have
 \[G_i(x+tv)\ominus_{gH} G_i(x)\prec \sigma t\odot\nabla_{gH} G_i(x)^\top\odot v 
 \implies G_i(x+tv)\prec G_i(x)\oplus \sigma t\odot\nabla_{gH} G_i(x)^\top\odot v.\]
 Since $\nabla_{gH}^2 G_i(x)\succ {\bf 0}$ for all $i=1,2,\ldots,m$, we get 
 \begin{align*}
 	G_i(x+tv)\ominus_{gH} G_i(x)\prec \sigma t\odot\nabla_{gH} G_i(x)^\top\odot v \oplus \tfrac{\sigma t}{2}\odot v^\top \odot\nabla_{gH}^2 G_i(x)\odot v\prec\left[\sigma t,\sigma t\right]\odot \xi(x),
 \end{align*}
 which concludes the proof.
 \end{proof}
 
\medskip
 
Next, we present a step-wise algorithm of the Newton method for identifying Pareto critical points of the MIOP \eqref{minG(x)}.
\medskip

\begin{algorithm}[H]
	\caption{Newton method to find Pareto critical points of the MIOP \eqref{minG(x)} \label{Algorithm}}
	\begin{enumerate}[\bf{Step} 1 ]
		
		\item (Inputs)\\
		Provide $G :=  \left( G_1, G_2, \ldots, G_m \right)^\top$, where $G_1,G_2,\ldots, G_m$  are twice $gH$-continuously differentiable IVMs. Also, provide the domain $U:=\left\{x\in{\mathbb{R}}^n: lb\leq x \leq ub\right\}\subseteq{\mathbb{R}}^n$ of the decision variables for the MIOP \eqref{minG(x)}.\\
		\item (Initialization)\\
		Choose the step length reduction factor $\eta\in(0,1)$, line search parameter $\sigma\in(0,1)$ for the Armijo-like rule \eqref{Armijo condition}, and a random point $x^0 \in {\mathbb{R}}^n$ from the domain $U$. Provide the tolerance level $\epsilon > 0$. Set $k=0$.\\

		\item (Computation of $gH$-gradient and $gH$-Hessian at the point $x^k$)\\
		For all $i=1,2,\ldots,m$, compute \[\nabla_{gH} G_i\left(x^k\right):=\left[\underline{\nabla_{gH}G_i\left(x^k\right)},\overline{\nabla_{gH}G_i\left(x^k\right)}\right] \text{ and }\nabla_{gH}^2 G_i\left(x^k\right):=\left[\underline{\nabla_{gH}^2G_i\left(x^k\right)},\overline{\nabla_{gH}^2G_i\left(x^k\right)}\right].\]
		\\
		\item (Computation of a Newton direction at the point $x^k$)\\
		Compute the optimal solution $v\left(x^k\right)$ and the optimal value $\xi\left(x^k\right)$ of the unconstrained minimization problem \eqref{unconstrained min}, i.e.,
		\[
		v\left(x^k\right):=\underset{v\in{\mathbb{R}}^n}{\argmin}\: \psi\circ\phi_{x^k}(v)\text{ and }
		\xi\left(x^k\right):=\underset{v\in{\mathbb{R}}^n}{\min}\: \psi\circ\phi_{x^k}(v).
		\]

		\item
		(Stopping condition)\\
		If $\xi\left(x^k\right)>- \epsilon$, Stop and return $x^k$ as a Pareto critical point.\\
		Otherwise, go to {\bf Step 6}.\\

		\item (Computation of step length)\\
	Set $t_{k} \gets 1$.
	Keep reducing the step length by $t_{k}:=\eta t_{k}$ until \eqref{Armijo condition} is satisfied.\\
	\item (Update the iterative point)\\
		Update $x^{k+1} \gets x^k + t_k v\left(x^k\right)$, $k \gets k + 1$ and go to {\bf Step 3}.
		
	\end{enumerate}
	
\end{algorithm}
The well-definedness of Algorithm \ref{Algorithm} depends on {\bf Step 4} and {\bf Step 6}. Since the objective function of the minimization problem \eqref{unconstrained min} is strongly convex, it has a unique optimal solution. This guarantees the existence of the Newton direction $v\left(x^k\right)$, and consequently, this implies the well-definedness of {\bf Step 4}. In addition, Theorem \ref{steplength theorem} ensures the existence of the step length $t_k$, which guarantees the well-definedness of {\bf Step 6}. Hence, Algorithm \ref{Algorithm} is well-defined. 

\medskip

Now, we study the scaling effect of the Newton method for the MIOP \eqref{minG(x)}. The following result shows that the iteration scheme of the Newton method for the MIOP \eqref{minG(x)} is scaling independent of variable $x$. 
\medskip
\begin{proposition}\label{scaling of variable}
	Assume that $t_k=1$ for all $k\in{\mathbb{N}}$. If $T$ is a nonsingular matrix of order $n\times n$ such that $\hat{x}:=Tx$, then the iterative scheme of Algorithm \ref{Algorithm} $x^{k+1}:=x^k+v\left(x^k\right)$ is transformed to the iterative scheme $\hat{x}^{k+1}:=\hat{x}^k+\hat{v}\left(\hat{x}^k\right)$, where $\hat{v}\left(\hat{x}^k\right):=Tv\left(x^k\right)$. 
\end{proposition}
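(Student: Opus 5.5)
The plan is to pass to the transformed problem and show that its Newton subproblem is the original subproblem written in the new coordinates. The argmin then transforms by $T$, and the iteration follows.

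\textbf{Transformed problem and chain rule.} Set $\hat G_i(\hat x):=G_i(T^{-1}\hat x)$, so that $\underline{\hat G}_i(\hat x)=\underline{G}_i(T^{-1}\hat x)$ and $\overline{\hat G}_i(\hat x)=\overline{G}_i(T^{-1}\hat x)$. The transformed problem is $\min_{\hat x\in TU}\hat G(\hat x)$. The boundary functions are real-valued and $C^2$, so the ordinary chain rule applies to each of them:
\[
\nabla\underline{\hat G}_i(\hat x)=T^{-\top}\nabla\underline G_i(x),\qquad \nabla^2\underline{\hat G}_i(\hat x)=T^{-\top}\nabla^2\underline G_i(x)T^{-1}.
\]
The same formulas hold for $\overline{\hat G}_i$.

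\textbf{Identity for the model functions.} Write $\hat g^i_{\hat x}$ for the model \eqref{gxi} built from $\hat G_i$ at $\hat x$. The key claim is
\[
\overline{\hat g}^i_{\hat x}(\hat v)=\overline g^i_x(T^{-1}\hat v)\quad\text{for all }\hat v\in\mathbb R^n,\ i=1,\ldots,m.
\]
For the affine and quadratic parts I would use the chain-rule formulas directly, since $(T^{-\top}a)^\top\hat v=a^\top T^{-1}\hat v$. This handles the midpoint terms $\tfrac12(\underline{\nabla_{gH}G_i}+\overline{\nabla_{gH}G_i})^\top v$ and $\tfrac14 v^\top(\underline{\nabla^2_{gH}G_i}+\overline{\nabla^2_{gH}G_i})v$ in \eqref{gxi-lower-upper}.

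\textbf{Conclusion from the identity.} Granting the claim, $\psi\circ\phi_{\hat x}(\hat v)=\psi\circ\phi_x(T^{-1}\hat v)$. Because $\hat v\mapsto T^{-1}\hat v$ is a bijection of $\mathbb R^n$ and each argmin is unique by strong convexity, we get $\hat v(\hat x)=Tv(x)$ and $\hat\xi(\hat x)=\xi(x)$. With $t_k=1$, induction on $k$ from $\hat x^0=Tx^0$ then gives
\[
\hat x^{k+1}=\hat x^k+\hat v(\hat x^k)=T\bigl(x^k+v(x^k)\bigr)=Tx^{k+1}.
\]

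\textbf{Main obstacle.} The hard step is the radius terms of \eqref{gxi-lower-upper}, namely $\tfrac12(\overline{\nabla_{gH}G_i}-\underline{\nabla_{gH}G_i})^\top|v|$ and the analogous $|v|^\top(\cdot)|v|$ term. These involve componentwise $\min/\max$ and $|v|$, which commute with $T$ only when $T$ maps coordinate axes to coordinate axes.
\begin{itemize}
\item If $T=PD$ with $P$ a permutation and $D$ a nonsingular diagonal matrix, each component of $\hat v$ is a scaled copy of a single component of $v$. Then $\max\{\underline a_r v_r,\overline a_r v_r\}$ is preserved term by term under the rescaling $a_r\mapsto a_r/d_r$, $v_r\mapsto d_rv_r$. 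The same holds for the Hessian entries and for negative $d_r$, since the min/max pair is only swapped. The identity then holds exactly.
\item For a general nonsingular $T$, I would first try to define the transformed $gH$-gradient and $gH$-Hessian through the transformed boundary derivatives, as in Step 3 of Algorithm \ref{Algorithm}, and check the radius terms directly.
\end{itemize}
If the identity fails for general $T$, I would state the result for the coordinate-scaling class $T=PD$. That class is the natural meaning of ``scaling of variables''.
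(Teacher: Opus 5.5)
Your route is the paper's route. The paper also sets $\hat G_i=G_i\circ T^{-1}$, claims that the transformed upper model equals $\overline g^i_x(T^{-1}v)$, and reads off $\hat v(\hat x^k)=Tv(x^k)$ from the argmin. The difference is that the paper never deals with the radius terms you isolated. After \eqref{variable scaling eq.1} it simply rewrites $\nabla_{gH\hat x}\hat G_i(\hat x)^\top\odot v$ as $\nabla_{gH}G_i(x)^\top\odot(T^{-1}v)$, and does the same for the Hessian term. Since $\bigl((T^{-1})^\top\odot A\bigr)^\top\odot v=\bigoplus_s\bigoplus_r\bigl((T^{-1})_{sr}v_r\bigr)\odot A_s$, that rewriting needs $\lambda\odot A\oplus\mu\odot A=(\lambda+\mu)\odot A$ for scalars of opposite sign. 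This is false for every nondegenerate interval $A$ (take $\lambda=1$, $\mu=-1$). So your worry is justified. The general-$T$ branch of your proposal cannot be completed, because the proposition is false for general nonsingular $T$.

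Here is a counterexample. Take $n=2$, $m=1$, $U=(-2,2)^2$ and $G(x)=\bigl[\tfrac12\|x\|^2,\ \tfrac12\|x\|^2+x_1+2\bigr]$. Then $\nabla_{gH}G(x)=([x_1,x_1+1],[x_2,x_2])^\top$ and $\nabla^2_{gH}G(x)=I$, so
\[
\overline g_x(v)=x^\top v+\max\{0,v_1\}+\tfrac12\|v\|^2 .
\]
At $x=(0,1)^\top$ this gives $v(x)=(0,-1)^\top$ and $\xi(x)=-\tfrac12$. Let $T=\begin{pmatrix}1&-1\\0&1\end{pmatrix}$, so $\hat x=Tx=(-1,1)^\top$. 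At $\hat x$ the boundary functions of $\hat G=G\circ T^{-1}$ have gradients $(0,1)^\top$ and $(1,2)^\top$, and both have Hessian $T^{-\top}T^{-1}=\begin{pmatrix}1&1\\1&2\end{pmatrix}$; here \eqref{variable scaling eq.1} even holds exactly. Hence
\[
\overline{\hat g}_{\hat x}(w)=\max\{0,w_1\}+\max\{w_2,2w_2\}+\tfrac12\bigl(w_1^2+2w_1w_2+2w_2^2\bigr).
\]
At $w=(0,-\tfrac12)^\top$ the subdifferential is $\{(\theta-\tfrac12,0):\theta\in[0,1]\}\ni 0$, so this is the unique minimizer, with value $-\tfrac14$. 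At $Tv(x)=(1,-1)^\top$ the model equals $\tfrac12$. Thus $\hat v(\hat x)\neq Tv(x)$ and $\hat\xi(\hat x)\neq\xi(x)$. With unit steps, $\hat x^1=(-1,\tfrac12)^\top\neq(0,0)^\top=Tx^1$. The culprit is exactly your radius term: $\overline{\hat g}_{\hat x}(w)$ contains $\max\{0,w_1\}+\max\{0,w_2\}$ where $\overline g_x(T^{-1}w)$ contains $\max\{0,w_1+w_2\}$.

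What survives is your fallback. For $T=PD$, with $P$ a permutation and $D$ a nonsingular diagonal matrix, your term-by-term argument is correct. It gives $\hat v(\hat x)=Tv(x)$ and $\hat\xi(\hat x)=\xi(x)$, so the stopping test is invariant too. For arbitrary $T$ the statement is true only if $\hat v(\hat x^k):=Tv(x^k)$ is read as a definition, in which case it is vacuous.

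Your class is also essentially the limit of the model-identity approach. Let $\delta=\nabla\overline G_i(x)-\nabla\underline G_i(x)$. The midpoint terms always match, so the model identity requires
\[
\sum_r|(T^{-\top}\delta)_r|\,|\hat v_r|=\sum_s|\delta_s|\,|(T^{-1}\hat v)_s| \quad\text{for all } \hat v.
\]
Suppose $\delta$ has no zero entry. Then the right side is a norm, so $T^{-\top}\delta$ has no zero entry either. This makes $\mathrm{diag}(|\delta|)\,T^{-1}\,\mathrm{diag}(|T^{-\top}\delta|)^{-1}$ an isometry of $\ell_1$, hence a signed permutation, so $T$ must be monomial.
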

\medskip

\begin{proof}
	By the transformation given by $\hat{x}:=Tx$, the multiobjective IVM $G:=\left(G_1,G_2,\ldots,G_m\right)^\top$ is transformed to $\hat{G}:=\left(\hat{G}_1,\hat{G}_2,\ldots,\hat{G}_m\right)^\top$, i.e., $\hat{G}\left(\hat{x}\right)=G(x)=G\left(T^{-1}\hat{x}\right)$. 
	So, for all $i=1,2,\ldots,m$, we have 
	\begin{equation}\label{variable scaling eq.1}
				\nabla_{gH\hat{x}}\hat{G}_i\left(\hat{x}\right):=\left(T^{-1}\right)^\top \odot \nabla_{gH}G_i(x)
				\text{ and }  \nabla_{gH\hat{x}}^2\hat{G}_i\left(\hat{x}\right):=\left(T^{-1}\right)^\top \odot \nabla_{gH}^2G_i(x)\odot T^{-1}.
	\end{equation}
	Therefore, from \eqref{gxi}, we get 
	\begin{align*}
		g_{\hat{x}}^i(v):=~&\nabla_{gH\hat{x}} \hat{G}_i\left(\hat{x}\right)^\top\odot v\oplus \tfrac{1}{2}\odot v^\top \odot \nabla_{gH\hat{x}}^2 \hat{G}_i\left(\hat{x}\right)\odot v\\
		\overset{\eqref{variable scaling eq.1}}{=}~& \nabla_{gH} G_i\left(x\right)^\top\odot \left(T^{-1}v\right)\oplus \tfrac{1}{2}\odot \left(T^{-1}v\right)^\top \odot \nabla_{gH}^2 G_i\left(\hat{x}\right)\odot \left(T^{-1}v\right).
	\end{align*}
	We have $\hat{x}^{k+1}:=\hat{x}^k+\hat{v}\left(\hat{x}^k\right)$, where $\hat{v}\left(\hat{x}^k\right)$ is calculated by
	 \begin{align*}
	 	\hat{v}\left(\hat{x}^k\right):=~&\underset{v\in{\mathbb{R}}^n}{\argmin} \left(\underset{i=1,2,\ldots,m}{\max}\left\{\text{upper boundary function of } g_{\hat{x}}^i(v)\right\}\right)\\
	 	=~&\underset{v\in{\mathbb{R}}^n}{\argmin} \biggl(\underset{i=1,2,\ldots,m}{\max}\biggl[\tfrac{1}{2}\left(\underline{\nabla_{gH} G_i\left(x^k\right)}+\overline{\nabla_{gH} G_i\left(x^k\right)}\right)^\top \left(T^{-1}v\right)\\
	 	&\hspace{2.8cm}+\tfrac{1}{2}\left({\overline{\nabla_{gH} G_i\left(x^k\right)}-\underline{\nabla_{gH} G_i\left(x^k\right)}}\right)^\top \left|T^{-1}v\right|\\
	 	&\hspace{2.8cm} +  \tfrac{1}{4} \left(T^{-1}v\right)^\top\left(\underline{\nabla_{gH}^2 G_i\left(x^k\right)}+\overline{\nabla_{gH}^2 G_i\left(x^k\right)}\right) T^{-1}v\\
	 	&\hspace{2.8cm}+\tfrac{1}{4}\left|T^{-1}v\right|^\top\left({\overline{\nabla_{gH}^2 G_i\left(x^k\right)}-\underline{\nabla_{gH}^2 G_i\left(x^k\right)}}\right) \left|T^{-1}v\right|\biggr]\biggr)\\
	 	=~& Tv\left(x^k\right).
	 \end{align*}
	 Therefore, we get $\hat{x}^{k+1}:=\hat{x}^k+\hat{v}\left(\hat{x}^k\right)\Longleftrightarrow x^{k+1}:=x^k+v\left(x^k\right)$, and this completes the proof. 
\end{proof}

\section{Convergence Analysis}\label{Convergence analysis}
In this section, we provide convergence analysis of Algorithm \ref{Algorithm}. If Algorithm \ref{Algorithm} gives the output after a finite number of iterations, then from {\bf Step 5}, it is obvious that the output is a Pareto critical point. We assume that corresponding to a given initial point $x^0$, an infinite sequence $\left\{x^k\right\}$ is generated by Algorithm \ref{Algorithm} for which $\xi\left(x^k\right)\neq 0$ for all $k\in\mathbb{N}$. Next result shows that all subsequential limits of $\left\{x^k\right\}$ are Pareto critical points of the MIOP \eqref{minG(x)}.

\medskip
\begin{theorem}\label{Convergence theorem}
Every accumulation point of the sequence $\left\{x^k\right\}$ generated by the Algorithm \ref{Algorithm} is a Pareto critical point of the MIOP \eqref{minG(x)}. Further, if the level set $$L_0 := \left\{x\in{\mathbb{R}}^n:G_i(x)\preceq G_i(x^0),i=1,2,\ldots m\right\}$$ is bounded, then the sequence $\left\{x^k\right\}$ remains bounded, and it has at least one accumulation point.
\end{theorem}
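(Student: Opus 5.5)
The plan follows the classical argument of Fliege--Svaiter type, transferred to the boundary functions. First I record monotonicity. By the Armijo-like rule \eqref{Armijo condition}, for every $k$ and every $i=1,2,\ldots,m$ we have
\[
\underline{G}_i\left(x^{k+1}\right)\leq \underline{G}_i\left(x^k\right)+\sigma t_k\,\xi\left(x^k\right)
\quad\text{and}\quad
\overline{G}_i\left(x^{k+1}\right)\leq \overline{G}_i\left(x^k\right)+\sigma t_k\,\xi\left(x^k\right).
\]
Since $\xi\left(x^k\right)<0$ by Theorem \ref{descent direction finding theorem}, both boundary sequences $\left\{\underline{G}_i\left(x^k\right)\right\}$ and $\left\{\overline{G}_i\left(x^k\right)\right\}$ are strictly decreasing. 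In particular $G_i\left(x^k\right)\preceq G_i\left(x^0\right)$ for all $k$, so $\left\{x^k\right\}\subseteq L_0$. The second assertion is then immediate: if $L_0$ is bounded, then $\left\{x^k\right\}$ is bounded, and Bolzano--Weierstrass gives an accumulation point. (I implicitly use that the iterates stay in $U$, as in the algorithm.)

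For the first assertion, let $\bar x\in U$ be an accumulation point with $x^{k_j}\to\bar x$. Telescoping the Armijo inequality for, say, $\overline{G}_1$ gives
\[
\sigma\sum_{k=0}^{K} t_k\left|\xi\left(x^k\right)\right|\leq \overline{G}_1\left(x^0\right)-\overline{G}_1\left(x^{K+1}\right).
\]
Along the subsequence, continuity of $\overline{G}_1$ and monotonicity force $\overline{G}_1\left(x^k\right)\to\overline{G}_1(\bar x)>-\infty$. Hence $\sum_k t_k\left|\xi\left(x^k\right)\right|<\infty$, and so $t_k\,\xi\left(x^k\right)\to0$.

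Suppose, for contradiction, that $\bar x$ is not Pareto critical. Then $\xi(\bar x)<0$ by Theorem \ref{descent direction finding theorem}(ii). By continuity of $\xi$ (Theorem \ref{descent direction finding theorem}(iv)), $\xi\left(x^{k_j}\right)\leq \xi(\bar x)/2<0$ for large $j$. Combined with the summability, this forces $t_{k_j}\to0$. Therefore, for large $j$, the trial step $t_{k_j}/\eta$ was rejected: some index $i$ (passing to a further subsequence, a fixed $i$ and a fixed boundary, lower or upper) satisfies
\[
\overline{G}_i\left(x^{k_j}+\tfrac{t_{k_j}}{\eta}v\left(x^{k_j}\right)\right)>\overline{G}_i\left(x^{k_j}\right)+\sigma\tfrac{t_{k_j}}{\eta}\,\xi\left(x^{k_j}\right),
\]
or the analogous inequality for $\underline{G}_i$. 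By Theorem \ref{descent direction finding theorem}(iii), $\left\{v\left(x^{k_j}\right)\right\}$ is bounded on a compact neighborhood of $\bar x$, so a further subsequence satisfies $v\left(x^{k_j}\right)\to\bar v$.

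The main obstacle is turning this rejection into a contradiction; this is the step I expect to be hardest. Applying the mean value theorem to the boundary function (which is $C^1$ because $G_i$ is $gH$-continuously differentiable), dividing by $t_{k_j}/\eta$, and letting $j\to\infty$ gives
\[
\nabla\overline{G}_i(\bar x)^\top\bar v\geq\sigma\,\xi(\bar x).
\]
On the other hand, by optimality and the expression \eqref{gxi-lower-upper}, for each $k$ both $\nabla\underline{G}_i\left(x^k\right)^\top v\left(x^k\right)$ and $\nabla\overline{G}_i\left(x^k\right)^\top v\left(x^k\right)$ are at most $\overline{g}^i_{x^k}\left(v\left(x^k\right)\right)-\tfrac14 v\left(x^k\right)^\top\left(\underline{\nabla_{gH}^2G_i}+\overline{\nabla_{gH}^2G_i}\right)v\left(x^k\right)\leq\xi\left(x^k\right)$. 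This uses that each directional derivative lies in the interval $\nabla_{gH}G_i\left(x^k\right)^\top\odot v\left(x^k\right)$, whose upper end is the linear part of $\overline{g}^i_{x^k}$, together with positive semidefiniteness. Passing to the limit gives $\nabla\overline{G}_i(\bar x)^\top\bar v\leq\xi(\bar x)<\sigma\,\xi(\bar x)$, since $\sigma<1$ and $\xi(\bar x)<0$. This contradicts the previous display, so $\bar x$ is Pareto critical. The delicate point is the careful bookkeeping that links the componentwise boundary derivatives to the upper function $\overline{g}^i_x$ and its continuity in $x$.
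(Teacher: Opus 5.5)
Your overall architecture is sound. Getting $t_{k_j}\to 0$ directly from $\xi(\bar x)<0$ along the subsequence that converges to $\bar x$ is cleaner than the paper's split on $\limsup_k t_k$ over the whole sequence. The gap is at the step you flag as hardest: the claim that $\overline{G}_i$ (or $\underline{G}_i$) is $C^1$ \emph{because} $G_i$ is $gH$-continuously differentiable is false. Write $G_i=[m-w,m+w]$. Then $G_i(x+h)\ominus_{gH}G_i(x)=[\Delta m-|\Delta w|,\Delta m+|\Delta w|]$, so $gH$-differentiability constrains only the midpoint and the \emph{absolute} increment of the half-width, and $w$ may have a kink. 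For example, $H(x):=[1,1.5]\odot x_1$ satisfies $H(x+v)\ominus_{gH}H(x)=[1,1.5]\odot v_1$ for all sufficiently small $v$. So $H$ is $gH$-differentiable with constant $gH$-gradient, yet its boundary functions $\min\{x_1,1.5x_1\}$ and $\max\{x_1,1.5x_1\}$ are not differentiable on $\{x_1=0\}$. Lemma~\ref{convexity, continuity with boundary functions} has no derivative-level analogue. Your mean value step and both limit passages (through $\nabla\overline{G}_i$ at the mean-value points and at $\bar x$) rest on this claim, so the argument is incomplete as written.

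There are two ways to close it.

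(a) The conclusion you need does hold under the standing hypotheses, but because of the local (strong) convexity of $G_i$, not its $gH$-smoothness. Along any line the midpoint $m$ is differentiable (Lemma~\ref{linear IVM lemma}). Convexity of $m+w$ then gives $w'_-\le w'_+$, while convexity of $m-w$ gives $w'_-\ge w'_+$. Thus both boundary functions have two-sided directional derivatives in every direction, so they are differentiable (for a convex function, a directional derivative that is odd in the direction is linear). Being convex and differentiable, they are $C^1$. With this justification inserted, your proof goes through verbatim.

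(b) More robustly, avoid differentiating the boundary functions altogether. Let $u_i(y,w):=\sum_{r=1}^n\max\{\underline{\nabla_{gH}G_i(y)_r}\,w_r,\overline{\nabla_{gH}G_i(y)_r}\,w_r\}$ be the upper endpoint of $\nabla_{gH}G_i(y)^\top\odot w$; it is jointly continuous.
\begin{itemize}
\item Your second chain actually proves $u_i(x^k,v(x^k))\le\overline{g}^i_{x^k}(v(x^k))\le\xi(x^k)$, hence $u_i(\bar x,\bar v)\le\xi(\bar x)$.
\item By Lemma~\ref{linear IVM lemma}, both $\tfrac1t(\underline{G}_i(y+tw)-\underline{G}_i(y))$ and $\tfrac1t(\overline{G}_i(y+tw)-\overline{G}_i(y))$ have $\limsup_{t\downarrow 0}$ at most $u_i(y,w)$.
\item A Dini mean value inequality then gives $\overline{G}_i(y+sw)-\overline{G}_i(y)\le s\max_{0\le\tau\le s}u_i(y+\tau w,w)$, and likewise for $\underline{G}_i$. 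Applying this to the rejected steps yields $u_i(\bar x,\bar v)\ge\sigma\xi(\bar x)$.
\end{itemize}
Alternatively, use the paper's device. Fix the trial step $\eta^p$ and let $j\to\infty$ first, which needs only continuity of the boundary functions. Pigeonhole the failing index and boundary, first in $j$ and then in $p$. Only then let $p\to\infty$ at the single point $\bar x$. Either way $\sigma\xi(\bar x)\le u_i(\bar x,\bar v)\le\xi(\bar x)<\sigma\xi(\bar x)$, a contradiction. This fixed-step device is why the paper's Case 2 never needs differentiability of $\underline{G}_i,\overline{G}_i$ near $\bar x$.
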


\medskip
\begin{proof}
	Let $\bar{x}$ be one of the accumulation points of $\left\{x^k\right\}$. We will prove that $\bar{x}$ is a Pareto critical point of the MIOP \eqref{minG(x)}. Let $v(\bar{x})$ and $\xi(\bar{x})$ be the optimal solution and the optimal value of \eqref{unconstrained min} at the point $\bar{x}$, i.e., $v(\bar{x}):=\underset{v\in{\mathbb{R}}^n}{\argmin}\:  \psi\circ\phi_{\bar{x}}(v)$ and $\xi(\bar{x}):=\underset{v\in{\mathbb{R}}^n}{\min}  \psi\circ\phi_{\bar{x}}(v)$. So, according to Theorem \ref{descent direction finding theorem}, $\bar{x}$ will be a Pareto critical point of the MIOP \eqref{minG(x)} if $\xi\left(\bar{x}\right)=0$. Therefore, our aim is to show that $\xi\left(\bar{x}\right)=0$. For all $k\in{\mathbb{N}}$ and for all $i=1,2,\ldots,m$, we have $G_i\left(x^{k+1}\right)\preceq G_i\left(x^{k}\right)$ and $\underset{k\to\infty}{\lim}G_i\left(x^{k}\right)=G_i\left(\bar{x}\right)$. Therefore, we have
	\begin{align*}
		 & \underset{k\to\infty}{\lim}\underline{G}_i\left(x^{k}\right)=\underline{G}_i\left(\bar{x}\right)\text{ and } \underset{k\to\infty}{\lim} \overline{G}_i\left(x^{k}\right)=\overline{G}_i\left(\bar{x}\right)\\
		\implies & \underset{k\to\infty}{\lim}\left|\underline{G}_i\left(x^{k+1}\right)-\underline{G}_i\left(x^k\right)\right|=0\text{ and } \underset{k\to\infty}{\lim} \left|\overline{G}_i\left(x^{k+1}\right)-\overline{G}_i\left(x^k\right)\right|=0,
	\end{align*}
	which implies
	\begin{align}\label{IR difference 0}
		 \underset{k\to\infty}{\lim}\left\lVert{G_i}\left(x^{k+1}\right)\ominus_{gH}{G_i}\left(x^k\right)\right\rVert_{\it I(\mathbb{R})}=0.
	\end{align}
	As $G_i\left(x^k\right)\ominus_{gH}G_i\left(x^{k+1}\right)\succeq(-1)\odot\left[\sigma t_k,\sigma t_k\right]\odot \xi\left(x^k\right)\succeq{\bf 0}$ for all $i=1,2,\ldots,m$, we must have from \eqref{IR difference 0} that \[\underset{k\to\infty}{\lim}t_k\: \xi\left(x^k\right)=0.\]
	 Note that $t_k\in(0,1]$ for all $k\in{\mathbb{N}}$. So, we have the following two cases: $\underset{k\to\infty}{\lim\sup}\:t_k>0$ or $\underset{k\to\infty}{\lim\sup}\:t_k=0$. 
	\begin{description}
	\item[{\bf Case 1.}] Let $\underset{k\to\infty}{\lim\sup}\:t_k>0$. Then, there exists a subsequence $\left\{x^{k_r}\right\}$ converging to $\bar{x}$ and $\bar{t}>0$ such that $\underset{r\to\infty}{\lim}\:t_{k_r}=\bar{t}$. So, we have 
	$\underset{r\to\infty}{\lim}\:\xi\left(x^{k_r}\right)=0.$
	Since $\xi$ is continuous, we get $\xi\left(\bar{x}\right)=0.$
	Therefore, by Theorem \ref{descent direction finding theorem}, $\bar{x}$ is a Pareto critical point of the MIOP \eqref{minG(x)}.
	\medskip
	\item[{\bf Case 2.}] Let $\underset{k\to\infty}{\lim\sup}\:t_k=0$. From \eqref{v bounded}, we have \[\|v\left(x^k\right)\|\leq\tfrac{2}{\lambda_{\min}}\left( \left\lVert\underline{\nabla_{gH}G_i\left(x^k\right)}\right\rVert+\left\lVert\overline{\nabla_{gH} G_i\left(x^k\right)}\right\rVert\right).\] Since $\underline{\nabla_{gH}G_i}$ and $\overline{\nabla_{gH}G_i}$ are continuous and $\underset{k\to\infty}{\lim}\:x^k=\bar{x}$, the sequence $\left\{v\left(x^k\right)\right\}$ is bounded. Consequently, the sequence $\left\{v\left(x^k\right)\right\}$ has a convergent subsequence. Since $\underset{k\to\infty}{\lim}\:x^k=\bar{x}$,   $\underset{k\to\infty}{\lim\sup}\:t_k=0$, and the sequence $\left\{v\left(x^k\right)\right\}$ has a convergent subsequence, we can take subsequences $\left\{x^{k_r}\right\},\left\{v\left(x^{k_r}\right)\right\}$, and $\left\{t_{k_r}\right\}$ converging to $\bar{x},\bar{v}$, and $0$, respectively. For all $r\in{\mathbb{N}}$, we have \begin{align}\label{xi-kr-less}
	\underset{i=1,2,\ldots,m}{\max}\:\overline{g}_{x^{k_r}}^i\left(v\left(x^{k_r}\right)\right)=\psi\circ\phi_{x^{k_r}}\left(v\left(x^{k_r}\right)\right)=\xi\left(x^{k_r}\right)<0.
\end{align}
For all $i=1,2,\ldots,m$ from \eqref{gxi-lower-upper}, we have 
\begin{align*}
\overline{g}_{x^{k_r}}^i\left(v\left(x^{k_r}\right)\right) ~=~ & \tfrac{1}{2}\left(\underline{\nabla_{gH} G_i\left(x^{k_r}\right)}+\overline{\nabla_{gH} G_i\left(x^{k_r}\right)}\right)^\top v\left(x^{k_r}\right) \\ 
& + \tfrac{1}{2}\left({\overline{\nabla_{gH} G_i\left(x^{k_r}\right)}-\underline{\nabla_{gH} G_i\left(x^{k_r}\right)}}\right)^\top \left|v\left(x^{k_r}\right)\right|\\
+& \tfrac{1}{4}v\left(x^{k_r}\right)^\top\left(\underline{\nabla_{gH}^2 G_i\left(x^{k_r}\right)}+\overline{\nabla_{gH}^2 G_i\left(x^{k_r}\right)}\right) v\left(x^{k_r}\right) \\ 
& + \tfrac{1}{4}\left|v\left(x^{k_r}\right)\right|^\top\left({\overline{\nabla_{gH}^2 G_i\left(x^{k_r}\right)}-\underline{\nabla_{gH}^2 G_i\left(x^{k_r}\right)}}\right) \left|v\left(x^{k_r}\right)\right|.
	\end{align*}
	Therefore, $\overline{g}_{x^{k_r}}^i\left(v\left(x^{k_r}\right)\right)\to \overline{g}_{\bar{x}}^i\left(\bar{v}\right)$ as $r\to\infty$.
	Consequently, from \eqref{xi-kr-less}, we get 
	\begin{equation}\label{convergence proof eq1}
		\underset{i=1,2,\ldots,m}{\max}\:\overline{g}_{\bar{x}}^i\left(\bar{v}\right)\leq\xi(\bar{x})\leq0.
	\end{equation}
	Let us now consider a fixed but arbitrary positive integer $p$. Since $t_{k_r}\to 0$ as $r\to\infty$ and $\eta\in(0,1)$, we have $t_{k_r}<\eta^p$, which means that the Armijo-like condition \eqref{Armijo condition} at $x^{k_r}$ is not satisfied at $t=\eta^p$. Then, for all $r$, there exists $i=i\left(k_r\right)\in\left\{1,2,\ldots,m\right\}$ such that one of two following inequalities is satisfied:
	\begin{equation*}
		\begin{rcases}
			\begin{aligned}
				&\underline{G}_{i}\left(x^{k_r}+\eta^p v\left(x^{k_r}\right)\right)> \underline{G}_{i}\left(x^{k_r}\right)+\sigma \eta^p \:\xi\left(x^{k_r}\right)\\
				\text{or, }&\overline{G}_{i}\left(x^{k_r}+\eta^p v\left(x^{k_r}\right)\right)> \overline{G}_{i}\left(x^{k_r}\right)+\sigma \eta^p \:\xi\left(x^{k_r}\right).
			\end{aligned} 
		\end{rcases} 
	\end{equation*} 
	Since $\left\{i\left(k_r\right)\right\}_r\subset \left\{1,2,\ldots,m\right\}$, there exists a subsequence $\left\{k_{r_l}\right\}_l$ and an index $i_0$ such that $i_0=i\left(k_{r_l}\right)$ for all $l=1,2,\ldots$, and one of two following inequalities is satisfied: 
		\begin{equation*}
		\begin{rcases}
			\begin{aligned}
				&\underline{G}_{i_0}\left(x^{k_{r_l}}+\eta^p v\left(x^{k_{r_l}}\right)\right)> \underline{G}_{i_0}\left(x^{k_{r_l}}\right)+\sigma \eta^p \:\xi\left(x^{k_{r_l}}\right)\\
				\text{or, }&\overline{G}_{i_0}\left(x^{k_{r_l}}+\eta^p v\left(x^{k_{r_l}}\right)\right)> \overline{G}_{i_0}\left(x^{k_{r_l}}\right)+\sigma \eta^p \:\xi\left(x^{k_{r_l}}\right).
			\end{aligned} 
		\end{rcases} 
	\end{equation*} 
	 Taking $l\to\infty$, we get
	 	\begin{equation*}
	 	\begin{rcases}
	 		\begin{aligned}
	 			&\underline{G}_{i_0}\left(\bar{x}+\eta^p \bar{v}\right)\geq \underline{G}_{i_0}\left(\bar{x}\right)+\sigma \eta^p \:\xi\left(\bar{x}\right)\\
	 			\text{or, }&\overline{G}_{i_0}\left(\bar{x}+\eta^p \bar{v}\right)\geq \overline{G}_{i_0}\left(\bar{x}\right)+\sigma \eta^p \:\xi\left(\bar{x}\right).
	 		\end{aligned}
	 	\end{rcases}
	 \end{equation*}
	 Therefore, we get 
	  \[G_{i_0}\left(\bar{x}+\eta^p\bar{v}\right)\nprec G_{i_0}\left(\bar{x}\right)\oplus \left[\sigma \eta^p,\sigma \eta^p\right]\odot \xi\left(\bar{x}\right),\] which is true for any positive integer $p$. Therefore, by Theorem \ref{steplength theorem}, we get $\nabla_{gH} G_{i_0}\left(\bar{x}\right)^\top\odot \bar{v}\nprec {\bf 0}$. Since $\nabla_{gH}^2G_{i_0}\left(\bar{x}\right)\succ{\bf 0}$, we get $\overline{g}_{\bar{x}}^{i_0}\left(\bar{v}\right)\geq0$. Therefore, we get 
	\begin{equation}\label{convergence proof eq2}
		\underset{i=1,2,\ldots,m}{\max}\:\overline{g}_{\bar{x}}^i\left(\bar{v}\right)\geq0.
	\end{equation}
	Using \eqref{convergence proof eq1} and \eqref{convergence proof eq2}, we get $\xi(\bar{x})=0$. Therefore, by Theorem \ref{descent direction finding theorem}, $\bar{x}$ is a Pareto critical point of the MIOP \eqref{minG(x)}.
	\end{description}
	
	Since for all $k\in{\mathbb{N}}$ and for all $i=1,2,\ldots,m$, $G_i\left(x^{k+1}\right)\preceq G_i\left(x^{k}\right)$, the sequence $\left\{x^k\right\}$ contained in the bounded level set $L_0:=\left\{x\in{\mathbb{R}}^n:G_i(x)\preceq G_i(x^0),i=1,2,\ldots m\right\}$. So, the sequence $\left\{x^k\right\}$ is bounded, and it has at least one accumulation point and this completes the proof.
\end{proof}

\section{Numerical Experiments}\label{Numerical experiments}
In this section, we show the performance of the proposed Algorithm \ref{Algorithm} on some test problems given in Appendix \ref{Test problems}. Algorithm \ref{Algorithm} is executed in MATLAB2023a software. This MATLAB software is installed on a 10th GEN PC with processor Intel(R) Core(TM) i5-1035G1 CPU @ 1.00GHz   1.19 GHz and 8 GB RAM. 

In the MATLAB execution of Algorithm \ref{Algorithm}, we take the following parameter values.
\begin{itemize}
	\item To choose the initial point, we use the lower and the upper bounds of the decision variables for each test problem as given in Appendix \ref{Test problems}. The initial point $x^0$ is chosen randomly from the domain of the test problem using ``rand" function of MATLAB.
	\item For all the test problems, we choose the step length reduction factor $\eta=\tfrac{1}{2}$ and the line search parameter $\sigma=0.001$. To find the step length $t_k$ for each $k$, we use the conditions given in \eqref{Armijo condition}. 
	\item To find $v\left(x^k\right)$ and $\xi\left(x^k\right)$ for each $k$, we solve the subproblem given in \eqref{equivalent constrained problem} using ``fmincon" MATLAB optimization toolbox.
	\item According to Algorithm \ref{Algorithm}, we set the stopping condition by $\xi\left(x^k\right)>-\epsilon$. Here, we choose the tolerance level $\epsilon=10^{-6}$.
\end{itemize}
With these parameter values, we first solve the test problem I-BK1. For this problem, we choose the initial point $x^0=\left(9.9862,-7.4332\right)^\top$. First, we compute $gH$-gradients and $gH$-Hessian  of IVMs $G_1$ and $G_2$. The $gH$-gradients of $G_1$ and $G_2$ are
 \[\nabla_{gH}G_1(x_1,x_2):=\begin{bmatrix}\left[0.2,0.4\right]\odot x_1 \\ \left[0.2,0.6\right]\odot x_2\end{bmatrix} \text{ and }\nabla_{gH}G_2(x_1,x_2):=\begin{bmatrix}\left[0.2,0.6\right]\odot \left(x_1-5\right) \\ \left[0.2,1\right]\odot \left(x_2-5\right)\end{bmatrix}.\] 
 The $gH$-Hessian of $G_1$ and $G_2$ are
 \[\nabla_{gH}^2G_1(x_1,x_2):=\begin{bmatrix}\left[0.2,0.4\right] & [0,0]\\ [0,0] & \left[0.2,0.6\right]\end{bmatrix} \text{ and }\nabla_{gH}^2G_2(x_1,x_2):=\begin{bmatrix}\left[0.2,0.6\right] & [0,0] \\ [0,0] & \left[0.2,1\right]\end{bmatrix}.\]
 Next, we show the iterations as follow:
 \begin{description}
 	\item[{\bf Iteration 1:}] At $x^0=\left(9.9862,-7.4332\right)^\top$, we have 
 	\[\nabla_{gH}G_1\left(x^0\right)=\begin{bmatrix}\left[1.99724,3.99448\right] \\ \left[-4.45992,-1.48664\right]\end{bmatrix}, \nabla_{gH}G_2\left(x^0\right)=\begin{bmatrix}\left[0.99724,2.99172\right] \\ \left[-12.4332,-2.48664\right]\end{bmatrix},\] 
 	\[\nabla_{gH}^2G_1\left(x^0\right)=\begin{bmatrix}\left[0.2,0.4\right] & [0,0]\\ [0,0] & \left[0.2,0.6\right]\end{bmatrix}, \text{ and }\nabla_{gH}^2G_2\left(x^0\right)=\begin{bmatrix}\left[0.2,0.6\right] & [0,0] \\ [0,0] & \left[0.2,1\right]\end{bmatrix}.\]
 	To find $v\left(x^0\right)$ and $\xi\left(x^0\right)$, we solve the following subproblem:
 	\begin{equation}\label{subproblem at iteration1}
 		\begin{rcases}
 			\begin{aligned}
 				\min_{\substack{{u,v\in{\mathbb{R}}^2}\\ \tau\in{\mathbb{R}}}}&\tau\\
 				\text{subject to } &2.99586v_1-2.97328v_2+0.99862u_1+1.48664u_2+0.15v_1^2+0.2v_2^2+0.05 u_1^2+0.1u_2^2\leq\tau,\\
 				&1.99448v_1-7.45992v_2+0.99724u_1+4.97328u_2+0.2v_1^2+0.3v_2^2+0.1 u_1^2+0.2u_2^2\leq\tau,\\
 				&-u_1\leq v_1 \leq u_1,~-u_2\leq v_2 \leq u_2.
 			\end{aligned}
 		\end{rcases}
 	\end{equation}
  We get the optimal solution $v\left(x^0\right)$ and the optimal value $\xi\left(x^0\right)$ of the subproblem \eqref{subproblem at iteration1} using ``fmincon" MATLAB optimization toolbox as $v\left(x^0\right)=\left(-1.6621, 2.4866\right)^\top$ and $\xi\left(x^0\right)=-3.920429$. Since $\xi\left(x^0\right)=-3.920429<-\epsilon$, we have to find the next iterative point. In this regard, we find the step length using Armijo-like rule. We see that $t=1$ satisfies the conditions given in \eqref{Armijo condition}. So, we get $t_0=1$. Therefore, the next iterative point is \[x^1=x^0+t_0v\left(x^0\right)=\left(9.9862,-7.4332\right)^\top+\left(-1.6621, 2.4866\right)^\top=\left(8.3241,-4.9466\right)^\top.\]
  \item[{\bf Iteration 2:}] At $x^1=\left(8.3241,-4.9466\right)^\top$, we have 
  \[\nabla_{gH}G_1\left(x^1\right)=\begin{bmatrix}\left[1.66482,3.32964\right] \\ \left[-2.96796,-0.98932\right]\end{bmatrix}, \nabla_{gH}G_2\left(x^1\right)=\begin{bmatrix}\left[0.66482,1.99446\right] \\ \left[-9.9466,-1.98932\right]\end{bmatrix},\]
  \[\nabla_{gH}^2G_1\left(x^1\right)=\begin{bmatrix}\left[0.2,0.4\right] & [0,0]\\ [0,0] & \left[0.2,0.6\right]\end{bmatrix}, \text{ and }\nabla_{gH}^2G_2\left(x^1\right)=\begin{bmatrix}\left[0.2,0.6\right] & [0,0] \\ [0,0] & \left[0.2,1\right]\end{bmatrix}.\]
  To find $v\left(x^1\right)$ and $\xi\left(x^1\right)$, we solve the following subproblem:
  \begin{equation}\label{subproblem at iteration2}
  	\begin{rcases}
  		\begin{aligned}
  			\min_{\substack{{u,v\in{\mathbb{R}}^2}\\ \tau\in{\mathbb{R}}}}&\tau\\
  			\text{subject to } &2.49723v_1-1.97864v_2+0.83241u_1+0.98932u_2+0.15v_1^2+0.2v_2^2+0.05 u_1^2+0.1u_2^2\leq\tau,\\
  			&1.32964v_1-5.96796v_2+0.66482u_1+3.97864u_2+0.2v_1^2+0.3v_2^2+0.1 u_1^2+0.2u_2^2\leq\tau,\\
  			&-u_1\leq v_1 \leq u_1,~-u_2\leq v_2 \leq u_2.
  		\end{aligned}
  	\end{rcases}
  \end{equation}
  We get the optimal solution $v\left(x^1\right)$ and the optimal value $\xi\left(x^1\right)$ of the subproblem \eqref{subproblem at iteration2} using ``fmincon" MATLAB optimization toolbox as $v\left(x^1\right)=\left(-1.1080, 1.9893\right)^\top$ and $\xi\left(x^1\right)=-2.347010$. Since $\xi\left(x^1\right)=-2.347010<-\epsilon$, we have to find the next iterative point. Now, we find the step length using Armijo-like rule. We see that $t=1$ satisfies the conditions given in \eqref{Armijo condition}. So, we get $t_1=1$. Therefore, the next iterative point is \[x^2=x^1+t_1v\left(x^1\right)=\left(8.3241,-4.9466\right)^\top+\left(-1.1080, 1.9893\right)^\top=\left(7.2161,-2.9573\right)^\top.\]
\end{description}
   Continue the iterations until the stopping condition ($\xi\left(x^k\right)>-\epsilon$) met, and for all such iterations, we compute $x^k$, $G\left(x^k\right)$, and $\xi\left(x^k\right)$ which are given in Table \ref{table.1}. It is observed that the stopping condition met after $12$ iterations.
\begin{table}[htbp]
	\caption{ Output of Algorithm \ref{Algorithm} for I-BK1 problem }
	\label{table.1}
	\centering
	\begin{tabular}{c c c c }
		\toprule
		$k$ & $\left(x^k\right)^\top$ & $G\left(x^k\right)^\top$ & $\xi\left(x^k\right)$\\ [0.5ex]
		\midrule
		1 & $\left(8.324133, -4.946560\right)$ & $\left(\left[9.375965, 21.198776\right],\left[10.998392, 52.781987\right]\right)$ &  $-2.347010$\\
		2 & $\left(7.216089, -2.957248\right)$ & $\left(\left[6.081725, 13.037982\right],\left[6.822885, 33.132213\right]\right)$ &  $-1.412520
		$ \\
		3 & $\left(6.242678, -1.410758\right)$ & $\left(\left[4.096127, 8.391277\right],\left[4.264207, 21.012185\right]\right)$  & $-0.815852$\\
		4 &  $\left(5.415449, -0.241812\right)$ & $\left(\left[2.938556, 5.882960\right],\left[2.764919, 13.790076\right]\right)$ &  $-0.453591$\\
		5 & $\left(4.745411, 0.622544\right)$ & $\left(\left[2.290648, 4.620053\right],\left[1.922694, 9.600505\right]\right)$ &  $-0.162046$\\
		6 &  $\left(4.259970, 1.107272\right)$ & $\left(\left[1.937339, 3.997284\right],\left[1.570098, 7.740959\right]\right)$ & $-0.027530$\\
		7 & $\left(4.049701, 1.304596\right)$ & $\left(\left[1.810204, 3.790606\right],\left[1.455908, 7.098927\right]\right)$ & $-0.004217$\\
		8 &  $\left(3.966205, 1.381552\right)$ & $\left(\left[1.763947, 3.718762\right],\left[1.416190, 6.867201\right]\right)$ & $-0.000626$\\
		9 &  $\left(3.933885, 1.411157\right)$ & $\left(\left[1.746682, 3.692499\right],\left[1.401640, 6.780878\right]\right)$ & $-0.000092$\\
		10 &  $\left(3.921485, 1.422489\right)$ & $\left(\left[1.740152, 3.682651\right],\left[1.396178, 6.748249\right]\right)$ & $-0.000013$\\
		11 &  $\left(3.916742, 1.426820\right)$ & $\left(\left[1.737668, 3.678918\right],\left[1.394106, 6.735842\right]\right)$ & $-0.000002$\\
		 12 &   $\left(3.914930, 1.428474\right)$ & $\left(\left[1.736722, 3.677497\right],\left[1.393317, 6.731112\right]\right)$ & $-2.8154e-07$\\[1ex]
		\bottomrule

	\end{tabular}
	
\end{table}\\
For graphical visualization, we depict the value of $G\left(x^k\right)$ and the value of $G\left(x^\star\right)$ in the objective feasible region generated by Algorithm \ref{Algorithm} with the initial point $x^0=\left(9.9862,-7.4332\right)^\top$ for the test problem I-BK1 in left and right hand side of Figure \ref{figure:test}, respectively. Note that each rectangle filled with pistachio color represents $G\left(x^k\right)$, the rectangle filled with golden yellow color represents $G\left(x^\star\right)$. In addition, black bullet points are the center of the rectangles $G\left(x^k\right)$ and the blue bullet point is the center of the rectangle $G\left(x^\star\right)$. The path with magenta color starting from black bullet point and ending at a blue bullet point is the trajectory by the center of the sequence $\left\{G\left(x^k\right)\right\}$ generated by Algorithm \ref{Algorithm}. The light blue shaded region is the objective feasible region. We take 5000 random points from the domain of the test problem. For each point $x$, we get a rectangle $G(x)$. Clearly, union of all such rectangles is the objective feasible region, i.e., $\text{objective feasible region}:=\underset{lb\leq x \leq ub}{\bigcup}G(x)$. In the objective feasible region, the black bullet point is the center of the rectangle $G$ at $x^0=\left(9.9862,-7.4332\right)^\top$ and the blue bullet point is the center of the rectangle $G$ at $x^\star=\left(3.914930, 1.428474\right)^\top$.
  
\begin{figure}[htbp]
	\begin{subfigure}[t]{0.45\textwidth}
		\includegraphics[width=\linewidth]{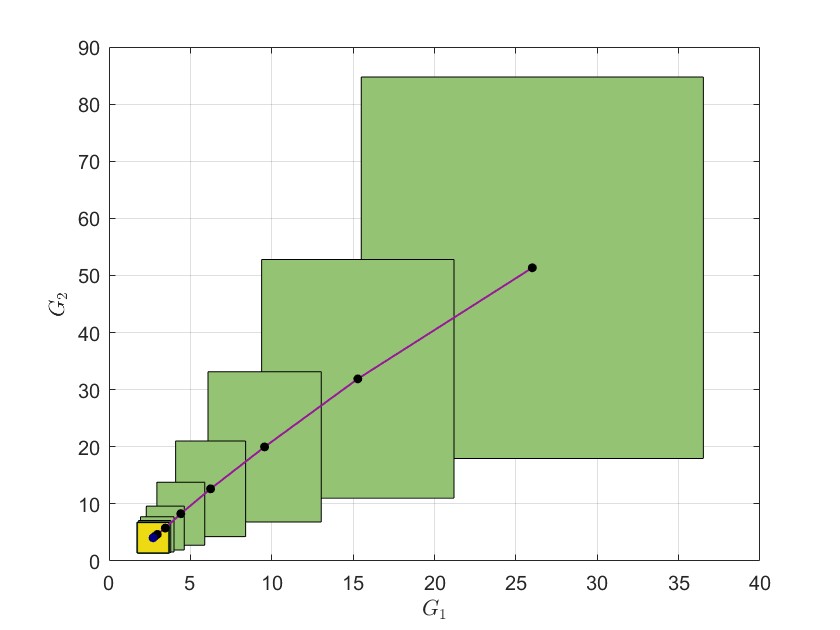}
		\caption{The value of $G\left(x^k\right)$ generated by Algorithm \ref{Algorithm} with the initial point $x^0=\left(9.9862,-7.4332\right)^\top$ for the test problem I-BK1}
		\label{fig:I-BK1-it}
	\end{subfigure}\hfill
	\begin{subfigure}[t]{0.45\textwidth}
		\includegraphics[width=\linewidth]{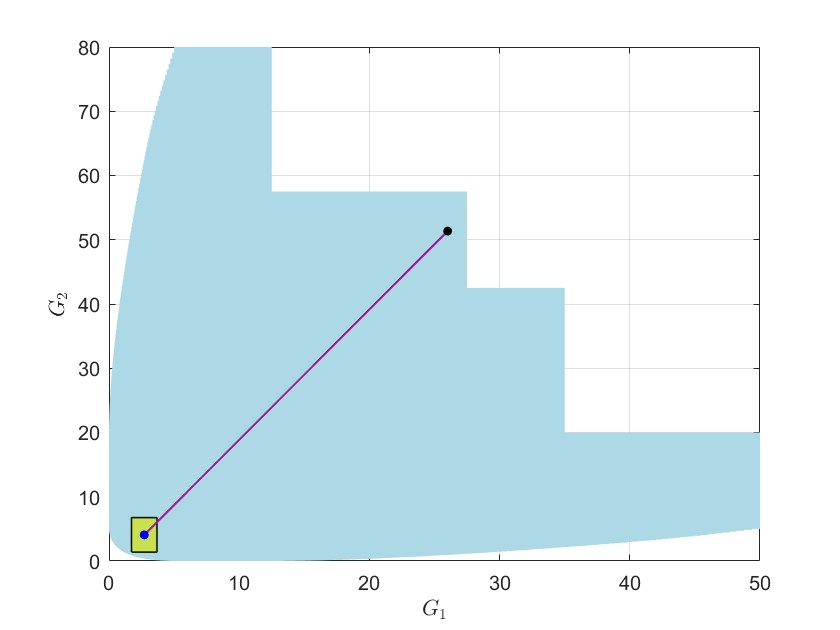}
		\caption{The value of $G\left(x^\star\right)$ in the objective feasible region for the test problem I-BK1}
		\label{fig:I-BK1}
	\end{subfigure}
	\caption{I-BK1.}
	\label{figure:test}
	\end{figure}
	We now incorporate a comparison with the existing methods. In the existing literature, there are Newton \cite{upadhayay2024newton} and quasi-Newton \cite{upadhayay2024quasi} methods for MIOPs. However, in these two articles, authors transformed an MIOP into a real-valued multiobjective optimization problem considering each objective function as a sum of the lower and the upper boundary functions of each IVM and further applied the Newton and quasi-Newton methods for real-valued multiobjective optimization, which essentially captures very tiny part of the entire set of efficient solutions. One can trivially find that almost entire part of the efficient solutions cannot be captured by the methods in \cite{upadhayay2024newton,upadhayay2024quasi}. So, for comparison, we adopt weighted sum method \cite{miettinen1999nonlinear,ehrgott2005multicriteria} for an MIOP to convert it in a single-objective interval optimization problem, and further find the efficient solution by the scalarization method in Bhurjee and Panda \cite{bhurjee2012efficient}. Let $\alpha\in[0,1]$. Using weighted sum method, the MIOP I-BK1 becomes in a single-objective interval optimization problem as \begin{equation}\label{verification1}
		\underset{x_1,x_2}{\min}\left[(1-\alpha)\odot\left([0.1,0.2]\odot x_1^2\oplus[0.1,0.3]\odot x_2^2\right)\oplus \alpha\odot\left(\left[0.1,0.3\right]\odot \left(x_1-5\right)^2\oplus\left[0.1,0.5\right]\odot \left(x_2-5\right)^2\right)\right].
	\end{equation}
	Using scalarization method with the weight function $w(t_1,t_2,t_3,t_4):=t_2+t_3$, the single-objective interval optimization problem \eqref{verification1} becomes as 
	\begin{equation}\label{verification2}
		\underset{x_1,x_2}{\min}\left[(1-\alpha)\left(0.15 x_1^2+0.21667 x_2^2\right)+ \alpha\left(0.21667 \left(x_1-5\right)^2+0.3 \left(x_2-5\right)^2\right)\right],
	\end{equation}
	which is a real-valued single-objective optimization problem. The solution of the problem \eqref{verification2} is 
	\begin{equation}\label{verification solution}
		x_1:=\tfrac{2.1667\alpha}{0.3+0.13334\alpha}\text{ and }x_2:=\tfrac{3\alpha}{0.43334+0.16666\alpha}, \quad \alpha \in[0,1].
	\end{equation}
	For different values of $\alpha\in[0,1]$, the solutions of the problem \eqref{verification2} and the objective values of the IVMs at the corresponding points are given in Table \ref{table.2}
	\begin{table}[htbp]
		\caption{ Solution by weighted sum method for I-BK1 problem }
		\label{table.2}
		\centering
		\begin{tabular}{c c c}
			\toprule
			$\alpha$ & $x^\top$ & $G\left(x\right)^\top$ \\ [0.5ex]
			\midrule
			0 & $\left(0.000000, 0.000000\right)$ & $\left(\left[0.000000,0.000000\right],\left[5.000000,20.000000\right]\right)$ \\
			0.1 & $\left(0.691498, 0.666658\right)$ & $\left(\left[0.092260, 0.228963\right],\left[3.734104, 14.957883\right]\right)$ \\
			0.2 & $\left(1.326546, 1.285699\right)$ & $\left(\left[0.341274, 0.847851\right],\left[2.729029, 10.946295\right]\right)$ \\
			0.3 &  $\left(1.911783, 1.862051\right)$ & $\left(\left[0.712214, 1.771153\right],\left[1.938380, 7.784487\right]\right)$  \\
			0.4 & $\left(2.452849, 2.399981\right)$ & $\left(\left[1.177638, 2.931263\right],\left[1.324898, 5.326893\right]\right)$  \\
			0.5 &  $\left(2.954564, 2.903207\right)$ & $\left(\left[1.715805, 4.274473\right],\left[0.858035, 3.453412\right]\right)$ \\
			0.6 & $\left(3.421069, 3.374983\right)$ & $\left(\left[2.309422, 5.757895\right],\left[0.513370, 2.068246\right]\right)$ \\
			0.7 &  $\left(3.855945, 3.818168\right)$ & $\left(\left[2.944672, 7.347185\right],\left[0.270559, 1.091021\right]\right)$\\
			0.8 &  $\left(4.262305, 4.628566\right)$ & $\left(\left[3.959086, 10.060535\right],\left[0.068215, 0.232240\right]\right)$ \\
			0.9 &  $\left(4.642862, 4.628566\right)$ & $\left(\left[4.297979, 10.738321\right],\left[0.026550, 0.107246\right]\right)$ \\
			1 &   $\left(5.000069, 5.000000\right)$ & $\left(\left[5.000069, 12.500138\right],\left[0.000000, 0.000000\right]\right)$ \\[1ex]
			\bottomrule

		\end{tabular}
		
	\end{table}\\ 
	In Table \ref{table.2}, each point $x$ is a Pareto optimal point generated by the weighted sum scalarization method. However, we see that no point $x$ given in Table \ref{table.2} dominates the point $x^\star=\left(3.914930, 1.428474\right)^\top$ generated by Algorithm \ref{Algorithm} (see Table \ref{table.1}). As the points $x$ generated by the weighted sum scalarization method (see Table \ref{table.2}) and the point $x^\star=\left(3.914930, 1.428474\right)^\top$ generated by Algorithm \ref{Algorithm} (see Table \ref{table.1}) are nondominated each others, the point $x^\star=\left(3.914930, 1.428474\right)^\top$ is also a Pareto optimal point for the MIOP I-BK1. Consequently, by Lemma \ref{interrelation lemma of Pareto optimal and critical}, the point $x^\star=\left(3.914930, 1.428474\right)^\top$ is a Pareto critical point. Hence, the verification of the proposed algorithm is done.
	
	Note that there does not exist any $\alpha\in[0,1]$ such that the point $x^\star=\left(3.914930, 1.428474\right)^\top$ satisfies \eqref{verification solution}. We see that the point $x^\star=\left(3.914930, 1.428474\right)^\top$ is a Pareto optimal point for the MIOP I-BK1, however, it cannot be obtained by the weighted sum scalarization method. Although the MIOP I-BK1 is a convex problem, the weighted sum method fails to find the Pareto optimal point $x^\star=\left(3.914930, 1.428474\right)^\top$. This is a drawback of the weighted sum method. The major drawback of the weighted sum method is its dependence on outside the problem data, which is a burden task to the decision-maker.
	
	\bigskip
	
	To show the performance of Algorithm \ref{Algorithm}, we compute iteration numbers and CPU time in seconds for each test problems, which are given in Table \ref{Performance table}. Taking 100 randomly chosen initial points, we compute Min, Max, Mean, Median, Mode, and Std. Dev. (Standard Deviation) for both of iteration numbers and CPU time  for each test problems. Further, we depict the performance profile in Figure \ref{figure:performance profile} from the perspective of Dolan-Mor{\'e} \cite{dolan2002benchmarking} performance profile to compare the performance of Algorithm \ref{Algorithm} with the algorithm of the steepest descent method \cite{mondal2025steepest}. Let $\mathcal{S}$ and $\mathcal{P}$ be the set of solvers and the set of problems, respectively. In addition, we assume that $N_s$ and $N_p$ are the number of solvers and the number of problems, respectively. We are interested in using average iteration numbers and average CPU time as performance measures. For each problem $p$ and solver $s$, we define \[I_{p,s}:=\text{ average number of iterations required to solve problem }p \text{ by solver }s\] and \[T_{p,s}:=\text{ average computing time required to solve problem }p \text{ by solver }s.\]
	We compare the performance on problem $p$ by solver $s$ with the best performance by any solver on this problem, i.e., we define the performance ratio for average iteration numbers and average CPU time by 
	\[R_{p,s}^I:=\tfrac{I_{p,s}}{\min\left\{I_{p,s}:s\in{\mathcal{S}}\right\}} \text{ and }R_{p,s}^T:=\tfrac{T_{p,s}}{\min\left\{T_{p,s}:s\in{\mathcal{S}}\right\}}.\] 
	The performance profile $\rho_I:{\mathbb{R}}\to[0,1]$ measured by average iteration numbers and the performance profile $\rho_T:{\mathbb{R}}\to[0,1]$ measured by average CPU time are defined by \[\rho_I\left(\zeta\right):=\tfrac{1}{N_p}\text{ size }\left\{p\in{\mathcal{P}}:R_{p,s}^I\leq \zeta\right\} \text{ and } \rho_T\left(\zeta\right):=\tfrac{1}{N_p}\text{ size }\left\{p\in{\mathcal{P}}:R_{p,s}^T\leq \zeta\right\}\text{ for all } \zeta\in {\mathbb{R}}.\]
	Note that $\rho_I(\zeta)$ and $\rho_T(\zeta)$ are the probabilities for solver $s\in {\mathcal{S}}$ that the performance ratios $R_{p,s}^I$ and $R_{p,s}^T$ are within a factor $\zeta\in{\mathbb{R}}$ of the best possible ratios, respectively. The functions $\rho_I$ and $\rho_T$ are the cumulative distribution functions for the performance ratios $R_{p,s}^I$ and $R_{p,s}^T$, respectively. From Figure \ref{figure:performance profile}, we see that Algorithm \ref{Algorithm} outperformed the algorithm of the steepest descent method \cite{mondal2025steepest}.
	\\
	\begin{table}[h]
		\centering
		\caption{Performance of Algorithm \ref{Algorithm}}
		\begin{tabular}{ccc}
			\toprule
			Problem & Iterations & CPU time \\
			name & (Min, Max, Mean, Median, Mode, Std. Dev.) & (Min, Max, Mean, Median, Mode, Std. Dev.) \\
			\midrule
			I-BK1 & (0, 29, 12.6800, 10.0000, 10, 6.8267) & (0.0133, 1.1959, 0.2485, 0.1975, 0.0133, 0.1802) \\
			I-VU2 & (0, 6, 3.8000, 4.0000, 4, 1.7403)
			 & (0.0114, 0.2136, 0.1028, 0.0755, 0.0114, 0.0651) \\
			I-CH & (0, 7, 2.8600, 3.0000, 1, 1.6082)
			 & (0.0102, 0.1356, 0.0608, 0.0571, 0.0102, 0.0231) \\
			I-FON & (0, 19, 4.1400, 3.0000, 1, 4.3624)
			 & (0.0086, 0.8644, 0.2015, 0.1276, 0.0086, 0.1838) \\
			I-KW2 & (1, 7, 2.6000, 2.0000, 1, 2.0656) & (0.0255, 0.8670, 0.2049, 0.0440, 0.0255, 0.3371)\\
			I-Far1 & (0, 18, 2.0000, 0.0000, 0, 5.6372) & (0.0118, 0.3512, 0.0560, 0.0188, 0.0118, 0.1045) \\
			I-Hil1 & (1, 12, 7.4000, 8.0000, 1, 4.0373)
			 & (0.0595, 1.2087, 0.5173, 0.1482, 0.0595, 0.5614) \\
			I-PNR & (0, 9, 5.9000, 6.0000, 7, 2.5683)
			
			& (0.0083, 0.1473, 0.0787, 0.0800, 0.0083, 0.0341) \\
			I-Deb & (1, 7, 1.7600, 1.0000, 1, 1.7357)
			& (0.0260, 0.9584, 0.2355, 0.0702, 0.0260, 0.2960) \\
			I-SD & (0, 9, 4.3900, 5.0000, 5, 2.2737)
			
			& (0.0114, 0.1534, 0.0833, 0.0833, 0.0114, 0.0361) \\
			I-IKK1 & (0, 189, 6.8700, 3.0000, 3, 21.6401)

			& (0.0087, 2.4322, 0.1399, 0.0945, 0.0087, 0.3002) \\
			I-VFM1 & (0, 18, 10.3800, 11.0000, 0, 5.0147)
			& (0.0084, 0.3866, 0.1639, 0.1588, 0.0084, 0.0814) \\
			I-MHHM2 & (2, 9, 6.6000, 7.0000, 7, 1.5308)
			
			& (0.0414, 0.2065, 0.1071, 0.1040, 0.0414, 0.0268) \\
			I-Viennet & (0, 11, 0.5600, 0.0000, 0, 1.7975)
			& (0.0089, 0.1732, 0.0267, 0.0186, 0.0089, 0.0279) \\
			I-AP1 & (0, 82, 3.8700, 0.0000, 0, 13.2327)
			& (0.0065, 23.9864, 0.9906, 0.0194, 0.0065, 3.6356) \\
			I-MOP7 & (12, 36, 21.1700, 19.0000, 16, 6.4417)
			& (0.1576, 1.1287, 0.4172, 0.3361, 0.1576, 0.1791) \\
			I-VFM2 & (0, 24, 9.0900, 9.0000, 9, 3.9083)
			& (0.0161, 0.4190, 0.1590, 0.1457, 0.0161, 0.0736)\\
			I-TR1 & (10, 15, 12.3333, 12.0000, 10, 2.5166)
			& (0.1283, 0.2156, 0.1654, 0.1589, 0.1283, 0.0452) \\
			I-AP4 & (0, 58, 3.4700, 0.0000, 0, 11.0201)
			
			& (0.0055, 18.2562, 0.6336, 0.0155, 0.0055, 2.4032) \\
			I-Comet & (0, 136, 6.0600, 3.0000, 3, 14.3075)
			& (0.0185, 2.6585, 0.1384, 0.0798, 0.0185, 0.2790) \\
			\bottomrule
		\end{tabular}
		\label{Performance table}
	\end{table}\\
	\begin{figure}[htbp]
		\begin{subfigure}[t]{0.45\textwidth}
			\includegraphics[width=\linewidth]{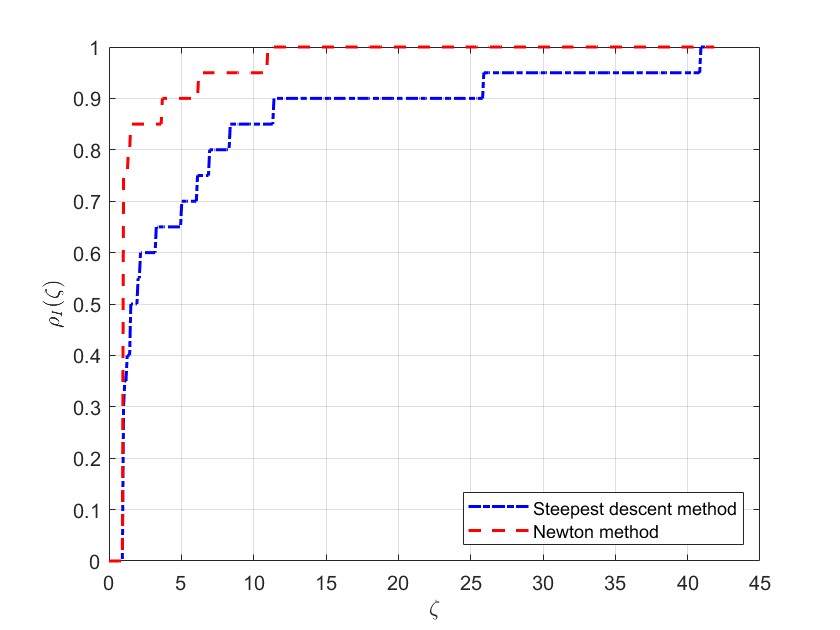}
			\caption{Performance profile of the steepest descent method and the Newton method measured by average iteration numbers}
			\label{iteration}
		\end{subfigure}\hfill
		\begin{subfigure}[t]{0.45\textwidth}
			\includegraphics[width=\linewidth]{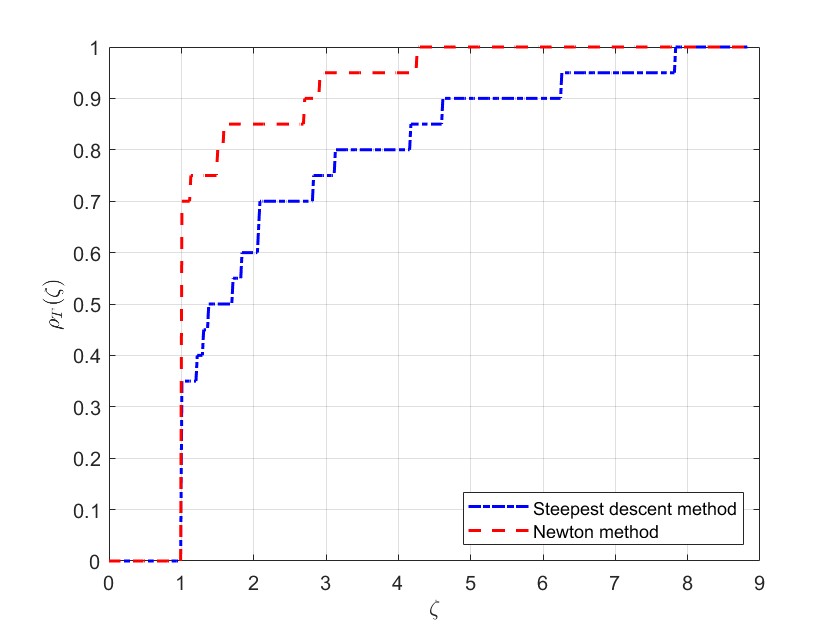}
			\caption{Performance profile of the steepest descent method and the Newton method measured by average CPU time}
			\label{CPU time}
		\end{subfigure}
		\caption{Performance profile.}
		\label{figure:performance profile}
	\end{figure}
	 We depict the objective feasible space for each biobjective test problems given in Appendix \ref{Test problems} and further, for five randomly chosen initial points, we depict $G\left(x^\star\right)$ in the objective feasible space for each biobjective test problems in Figure \ref{figure:biobjective}. Note that the black bullet point is the center of the rectangle $G\left(x^0\right)$ and the blue bullet point is the center of the rectangle $G\left(x^\star\right)$, and we join them by magenta color.
	 
	 \bigskip
	 \noindent
	 For triobjective, it is very difficult to visualize the objective feasible space. So, for a randomly chosen initial point $x^0$, we depict $G\left(x^0\right)$ and $G\left(x^\star\right)$ for each triobjective test problems in Figure \ref{figure:triobjective}. Note that the cube filled with pistachio color represents $G\left(x^0\right)$ and the cube filled with golden yellow color represents $G\left(x^\star\right)$. 
	 The path with magenta color starting from black bullet point and ending at a blue bullet point is the trajectory by the center of the sequence $\left\{G\left(x^k\right)\right\}$ generated by Algorithm \ref{Algorithm}.
\begin{figure}[htbp]
	\begin{subfigure}[t]{0.33\textwidth}
		\includegraphics[width=\linewidth]{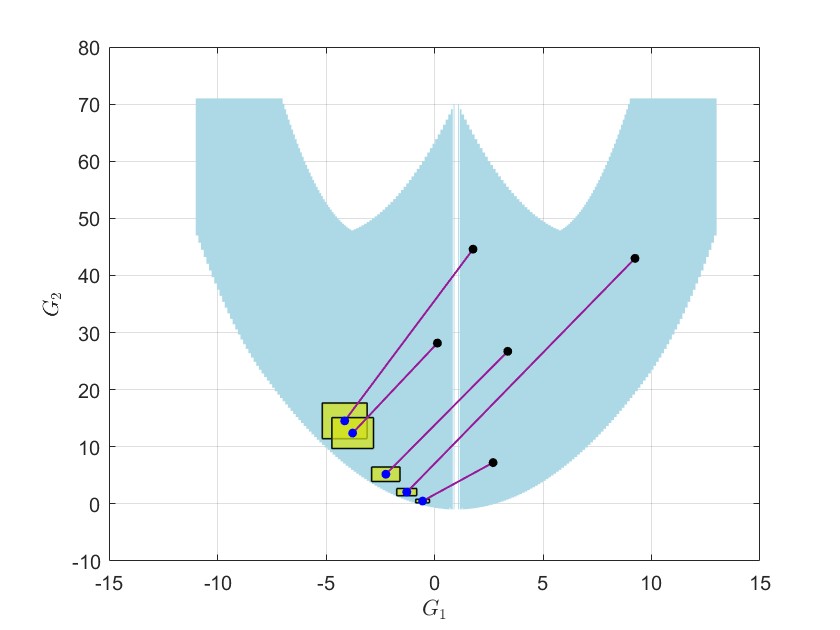}
		\caption{I-VU2}
		\label{fig:I-VU2}
	\end{subfigure}\hfill
	\begin{subfigure}[t]{0.33\textwidth}
		\includegraphics[width=\linewidth]{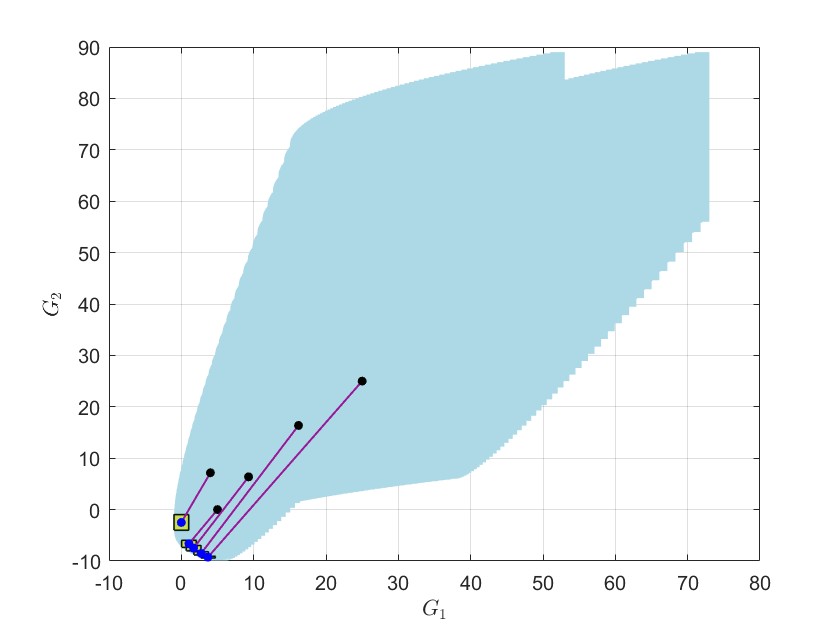}
		\caption{I-CH}
		\label{fig:I-CH}
	\end{subfigure}\hfill
	\begin{subfigure}[t]{0.33\textwidth}
		\includegraphics[width=\linewidth]{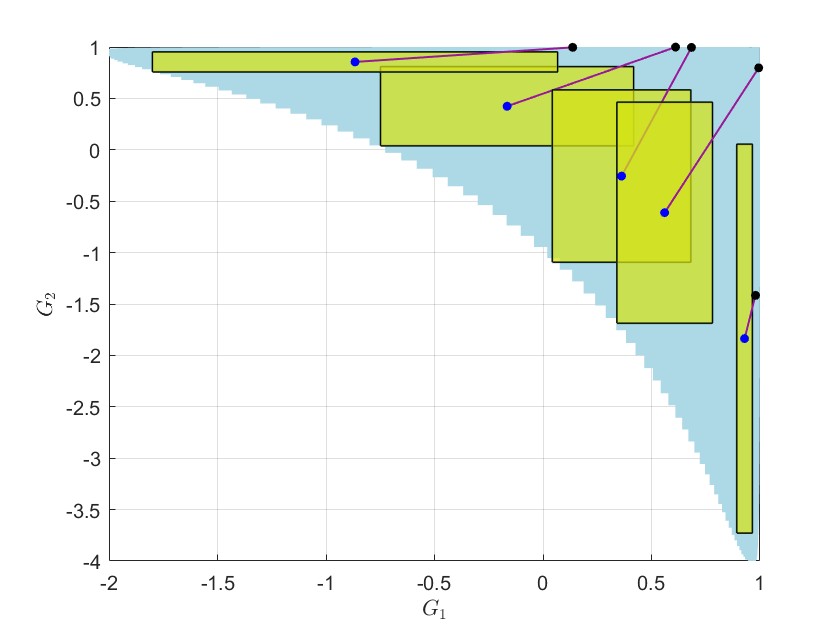}
		\caption{I-FON}
		\label{fig:I-FON}
	\end{subfigure}
	
	\begin{subfigure}[t]{0.33\textwidth}
		\includegraphics[width=\linewidth]{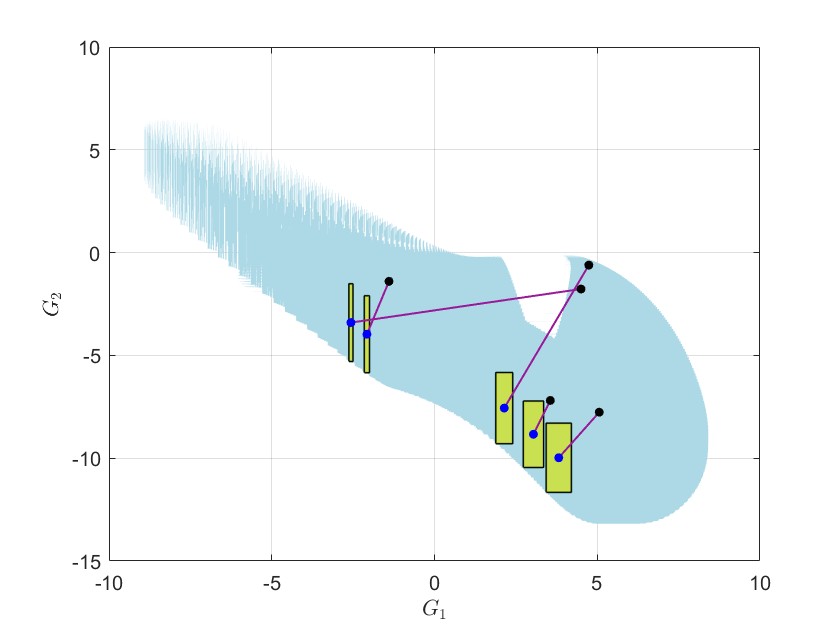}
		\caption{I-KW2}
		\label{fig:I-KW2}
	\end{subfigure}\hfill
	\begin{subfigure}[t]{0.33\textwidth}
		\includegraphics[width=\linewidth]{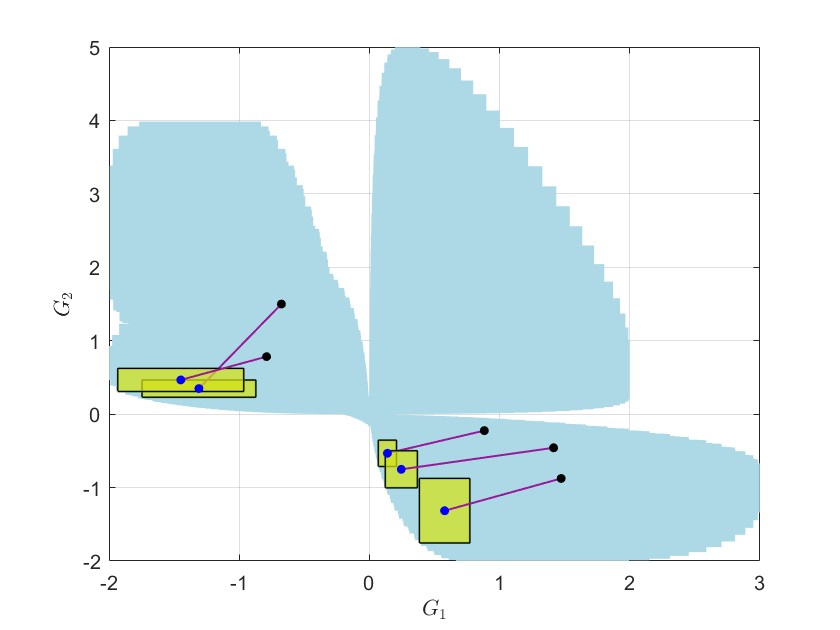}
		\caption{I-Far1}
		\label{fig:I-Far1}
	\end{subfigure}\hfill
	\begin{subfigure}[t]{0.33\textwidth}
		\includegraphics[width=\textwidth]{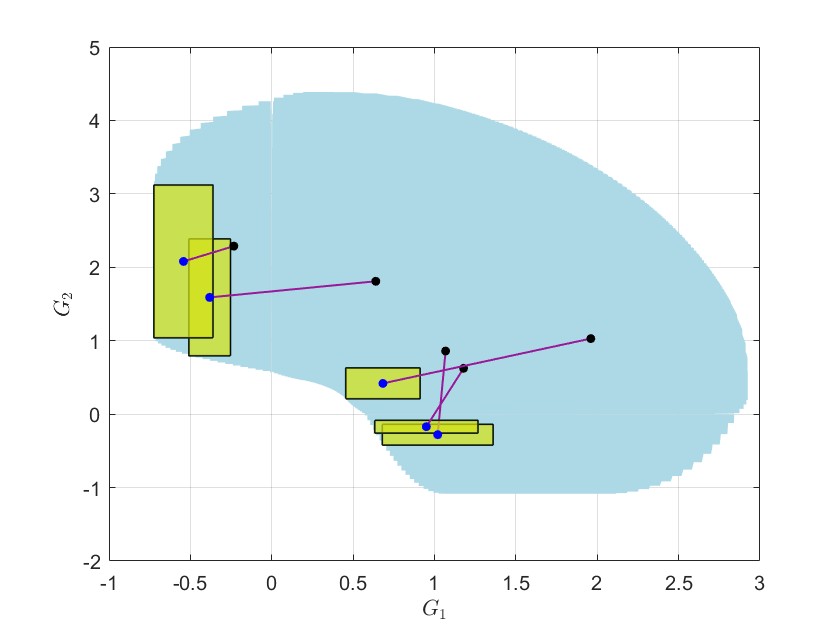}
		\caption{I-Hil1}
		\label{fig:I-Hil1}
	\end{subfigure}
	
	\begin{subfigure}[t]{0.33\textwidth}
		\includegraphics[width=\linewidth]{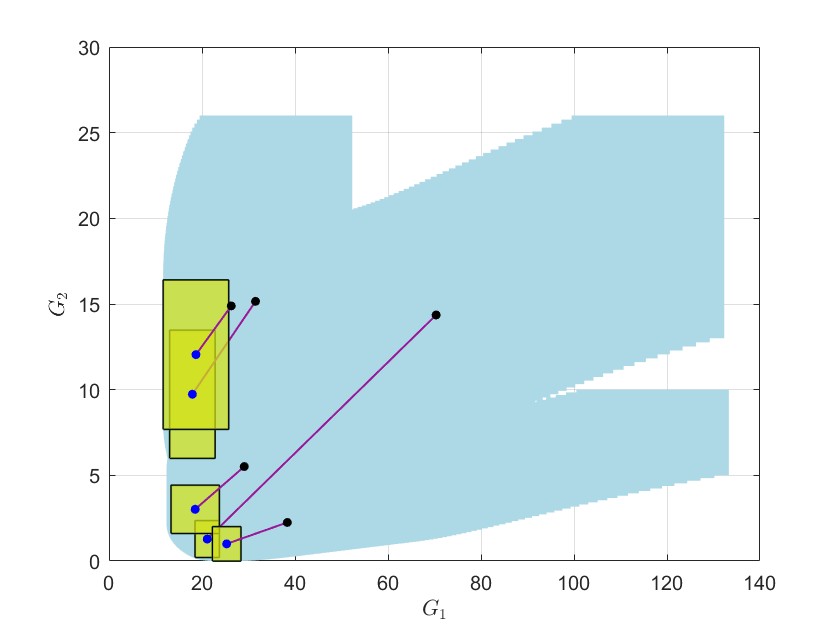}
		\caption{I-PNR}
		\label{fig:I-PNR}
	\end{subfigure}\hfill
	\begin{subfigure}[t]{0.33\textwidth}
		\includegraphics[width=\linewidth]{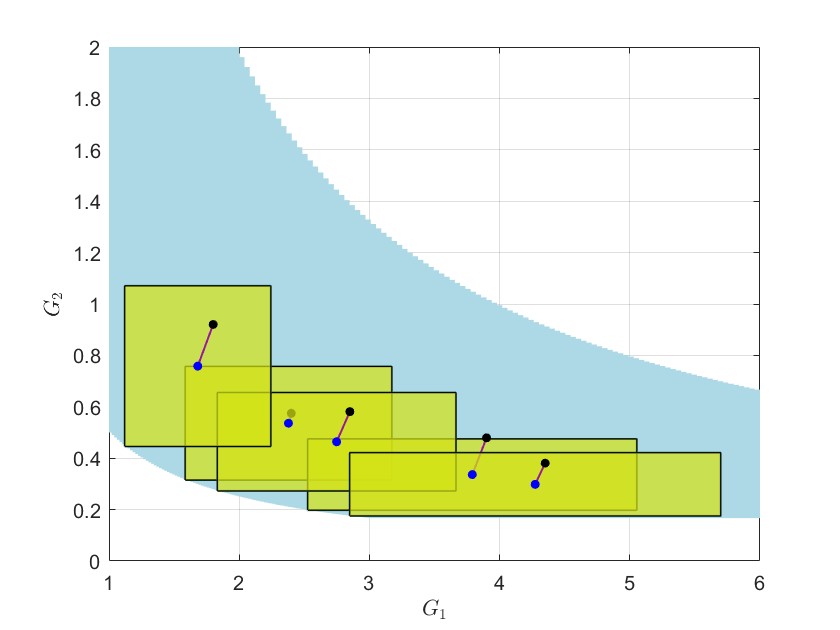}
		\caption{I-Deb}
		\label{fig:I-Deb}
	\end{subfigure}\hfill
	\begin{subfigure}[t]{0.33\textwidth}
		\includegraphics[width=\linewidth]{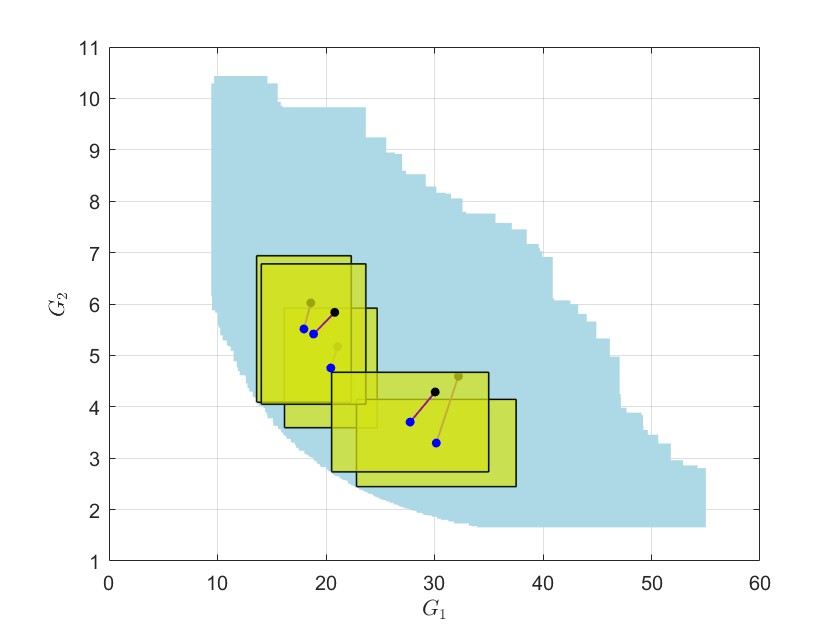}
		\caption{I-SD}
		\label{fig:I-SD}
	\end{subfigure}
	
	\caption{For five randomly chosen initial points, the locations of $G\left(x^\star\right)$ in the objective feasible region of biobjective test problems given in Appendix \ref{Test problems}.}
	\label{figure:biobjective}
\end{figure}
\begin{figure}[htbp]
	\begin{subfigure}[t]{0.33\textwidth}
		\includegraphics[width=\linewidth]{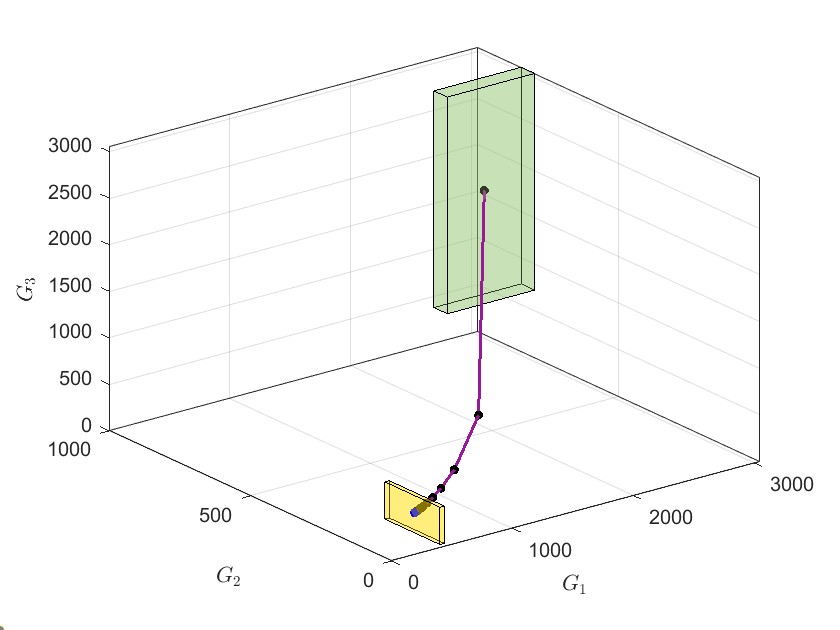}
		\caption{I-IKK1}
		\label{fig:I-IKK1}
	\end{subfigure}\hfill
	\begin{subfigure}[t]{0.33\textwidth}
		\includegraphics[width=\linewidth]{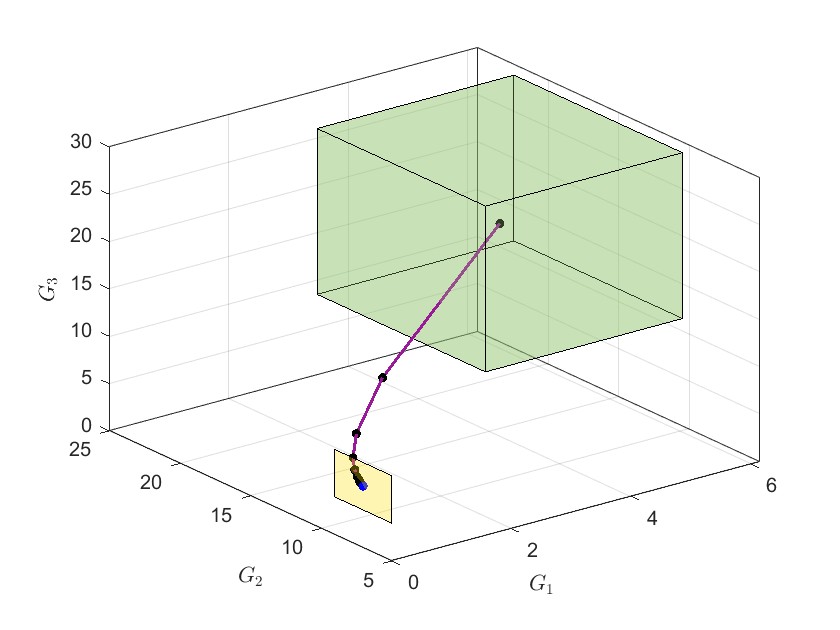}
		\caption{I-VFM1}
		\label{fig:I-VFM1}
	\end{subfigure}\hfill
	\begin{subfigure}[t]{0.33\textwidth}
		\includegraphics[width=\linewidth]{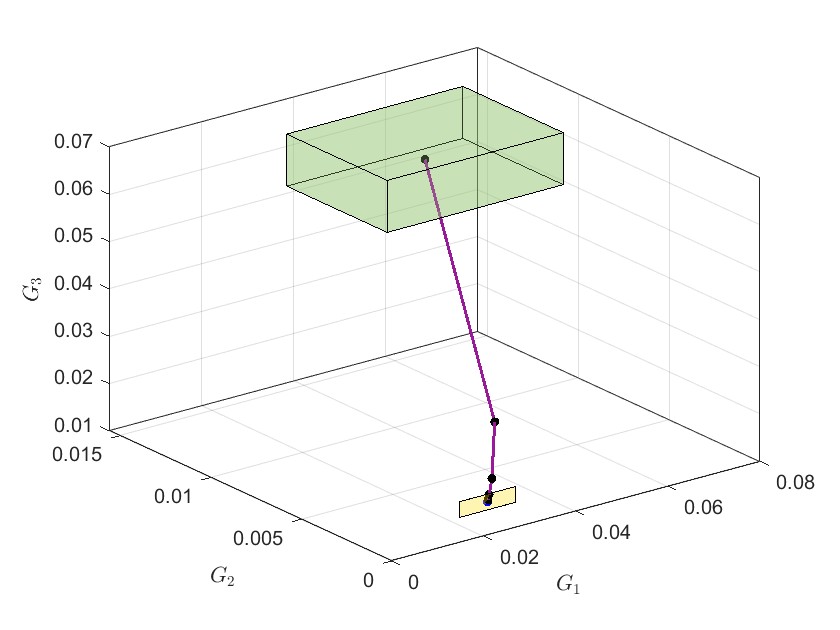}
		\caption{I-MHHM2}
		\label{fig:I-MHHM2}
	\end{subfigure}
	
	\begin{subfigure}[t]{0.33\textwidth}
		\includegraphics[width=\linewidth]{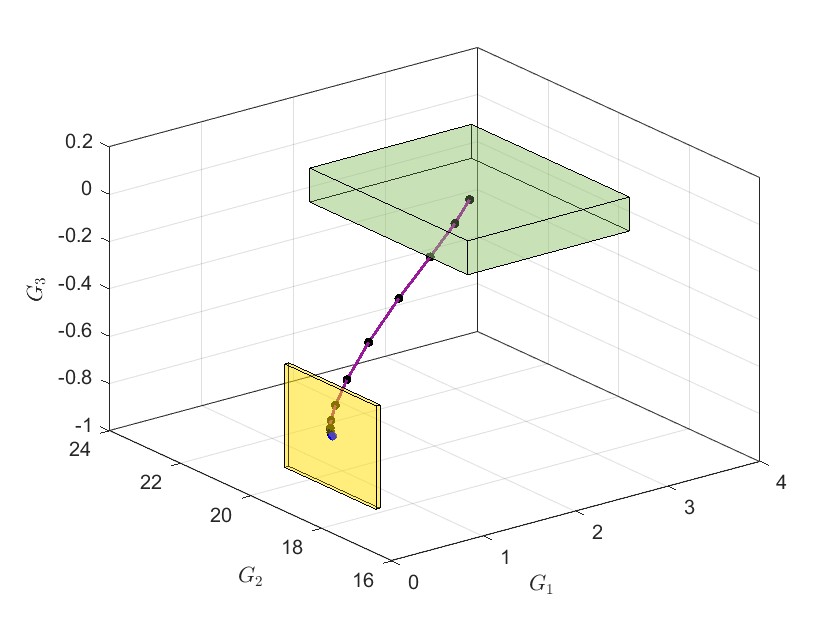}
		\caption{I-Viennet}
		\label{fig:I-Viennet}
	\end{subfigure}\hfill
	\begin{subfigure}[t]{0.33\textwidth}
		\includegraphics[width=\linewidth]{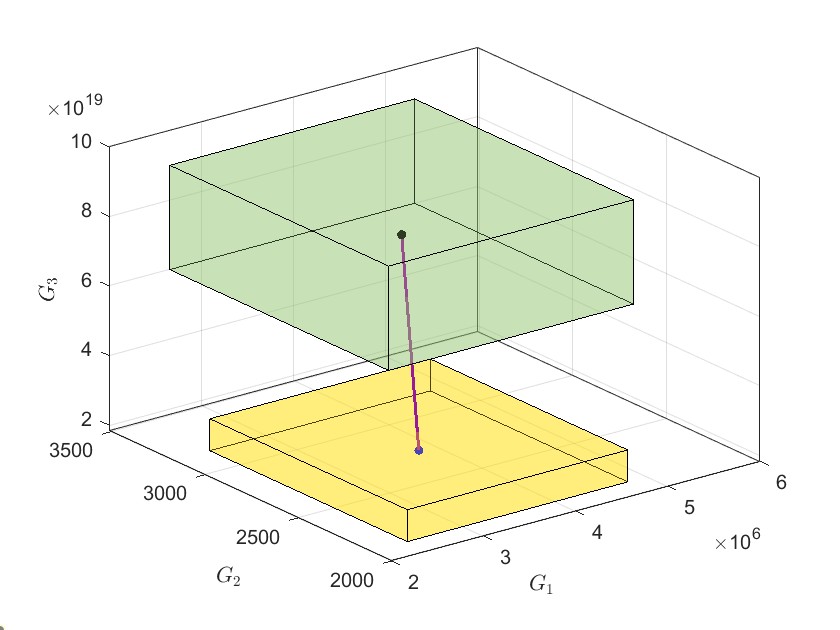}
		\caption{I-AP1}
		\label{fig:I-AP1}
	\end{subfigure}\hfill
	\begin{subfigure}[t]{0.33\textwidth}
		\includegraphics[width=\textwidth]{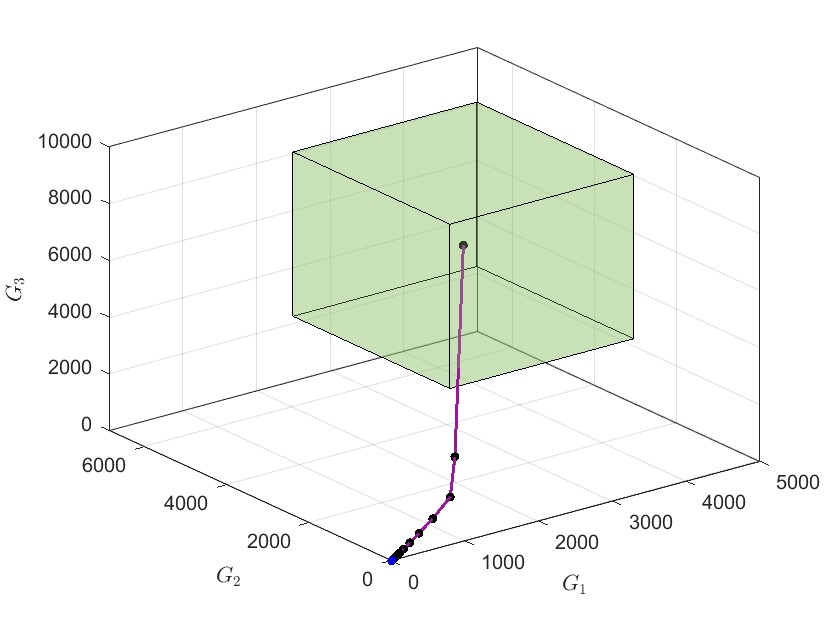}
		\caption{I-MOP7}
		\label{fig:I-MOP7}
	\end{subfigure}
	
	\begin{subfigure}[t]{0.33\textwidth}
		\includegraphics[width=\linewidth]{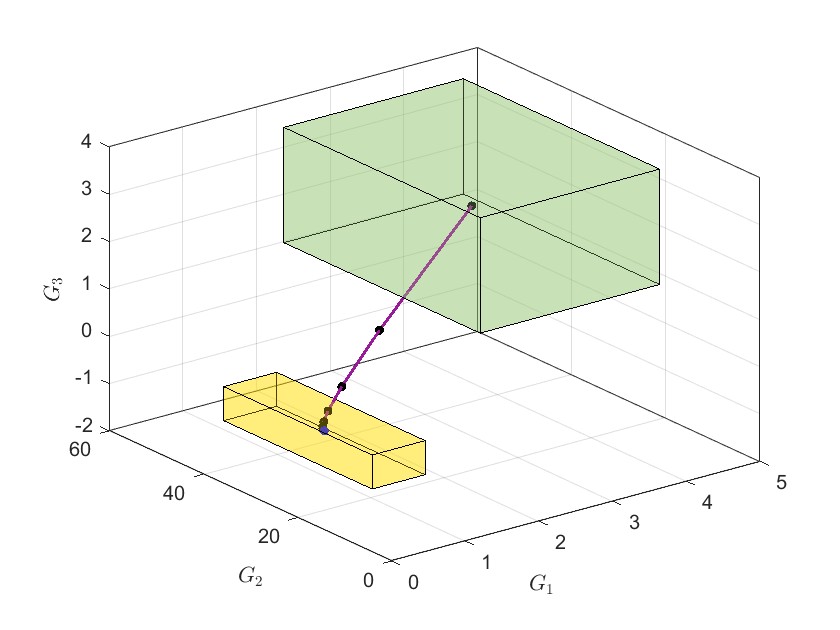}
		\caption{I-VFM2}
		\label{fig:I-VFM2}
	\end{subfigure}\hfill
	\begin{subfigure}[t]{0.33\textwidth}
		\includegraphics[width=\linewidth]{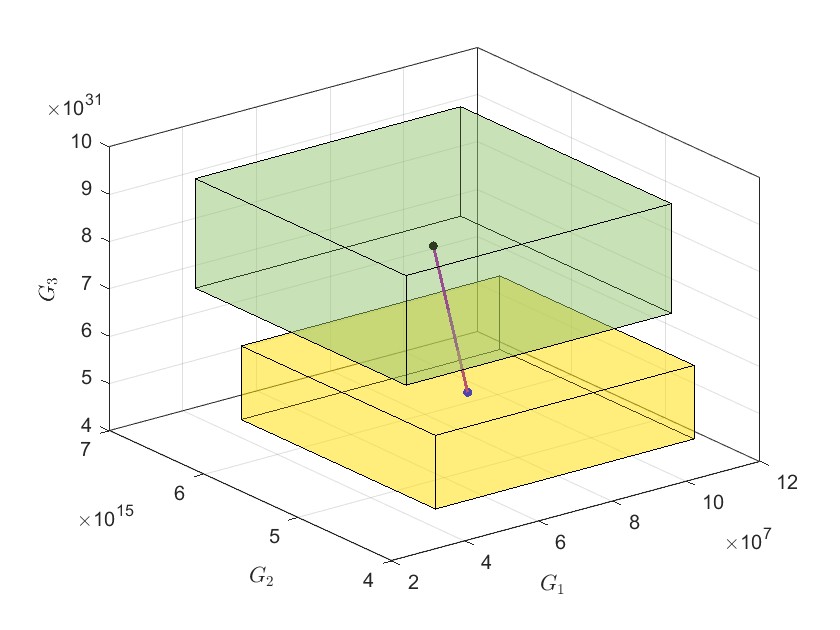}
		\caption{I-AP4}
		\label{fig:I-AP4}
	\end{subfigure}\hfill
	\begin{subfigure}[t]{0.33\textwidth}
		\includegraphics[width=\linewidth]{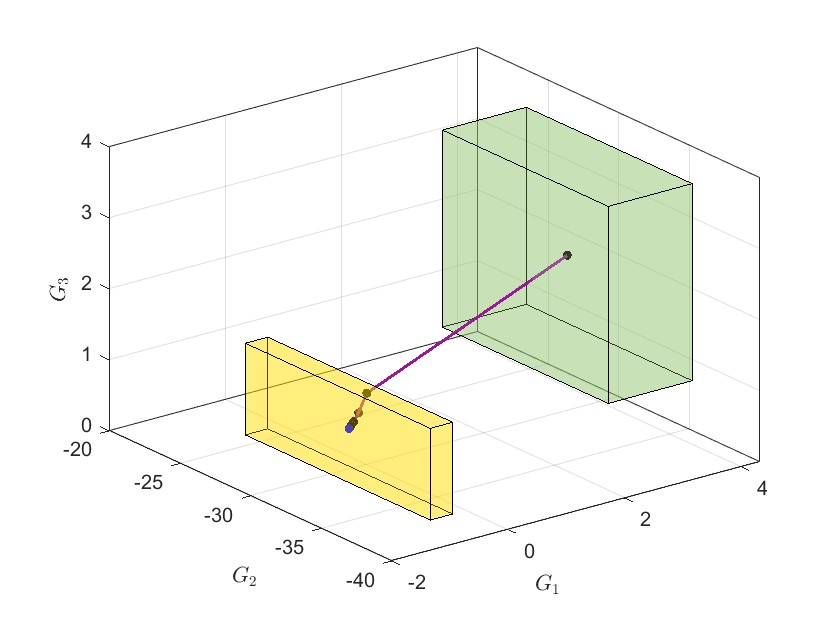}
		\caption{I-Comet}
		\label{fig:I-Comet}
	\end{subfigure}
	
	\caption{For a randomly chosen initial point, the location of $G\left(x^0\right)$ and $G\left(x^\star\right)$ of the triobjective test problems given in Appendix \ref{Test problems}.}
	\label{figure:triobjective}
\end{figure}
\clearpage
\section{Application}\label{Application} In this section, we demonstrate the effectiveness of Algorithm \ref{Algorithm} by applying it to a portfolio optimization problem. Portfolio selection is a fundamental problem in finance, and numerous approaches have been developed to tackle it, including the well-known Markowitz model. To validate the efficiency of our proposed algorithm, we consider a portfolio optimization problem from \cite{upadhayay2024newton}. Consider a portfolio optimization problem that involves two risky assets. Let $r_1$ and $r_2$ be the rate of returns on $x_1$ and $x_2$ portions of total investment funds, respectively. It is well-known that the sum of two portions of total investment funds is always equal to one, i.e., $x_1+x_2=1$. In real-world scenarios, investment returns are inherently uncertain due to market volatility. To model this, the return vector $r_j$ is characterized by its expected return for each asset, denoted as 
$d:=(d_1,d_2)^\top$, and its covariance matrix $\sigma:=\left[\sigma_{ij}\right]_{1\leq i,j\leq 2}$. Here, $\sigma_{ij}$ represents the covariance between $r_i$ and $r_j$, with $\sigma_{ij}=\sigma_{ji}$ for $i,j\in\left\{1,2\right\}$. The expected return of a portfolio is given by $d_1x_1+d_2x_2$, while the associated risk is quantified by the total covariance, expressed as $\sum_{i=1}^2\sum_{j=1}^2\sigma_{ij} x_i x_j$. Therefore, to maximize the expected return and simultaneously minimize the risk factor of the portfolio, it is necessary to solve the following multiobjective optimization problem
\begin{equation}\label{portfolio1}
	\begin{aligned}
		\begin{rcases}
			&\underset{x_1,x_2}{\min} \left(-\left(d_1x_1+d_2x_2\right),\sigma_{11}x_1^2+2\sigma_{12}x_1x_2+\sigma_{22}x_2^2\right)^\top\\
			&\text{subject to } x_1+x_2=1, x_1,x_2\geq 0.
		\end{rcases}
	\end{aligned}
\end{equation}
 In real-world scenarios, it is very difficult task to specify the coefficients $d_1, d_2, \sigma_{11},\sigma_{12},\text{ and } \sigma_{22}$ due to the presence of various types of uncertainty. However, it is possible to estimate lower and upper bounds for these coefficients in some cases. For instance, we assume $d_1=[2,3], d_2=[4,6], \sigma_{11}=[1,2],\sigma_{12}=[-1,0],\text{ and } \sigma_{22}=[2,3]$. Based on these assumptions, the problem \eqref{portfolio1} becomes interval-valued uncertainty framework as \begin{equation}\label{portfolio2}
 	\begin{aligned}
 		\begin{rcases}
 			&\underset{x_1,x_2}{\min} \left((-1)\odot \left(\left[2,3\right]\odot x_1\oplus \left[4,6\right]\odot x_2\right),\left[1,2\right]\odot x_1^2\oplus2\odot\left[-1,0\right]\odot x_1x_2\oplus\left[2,3\right]\odot x_2^2\right)^\top\\
 			&\text{subject to } x_1+x_2=1, x_1,x_2\geq 0.
 		\end{rcases}
 	\end{aligned}
 \end{equation} 
 Replacing $x_2=1-x_1$ in \eqref{portfolio2} and simplifying, we get the following problem 
 \begin{equation}\label{portfolio3}
 	\begin{aligned}
 		\begin{rcases}
 			&\underset{x_1}{\min} \left(\left[3x_1-6,2x_1-4\right],\left[5x_1^2-6x_1+2,5x_1^2-6x_1+3\right]\right)^\top\\
 			&\text{subject to } x_1\in\left[0,1\right].
 		\end{rcases}
 	\end{aligned}
 \end{equation} 
 Applying Algorithm \ref{Algorithm} to the problem \eqref{portfolio3} with the initial points $x_1=0,0.25,0.5,0.75,\text{ and }1$, we get the Pareto optimal solutions, which are given in Table \ref{portfolio solution table}.
 \begin{table}[htbp]
 	\caption{ Solutions of the problem \eqref{portfolio3}}
 	\label{portfolio solution table}
 	\centering
 	\begin{tabular}{c c c}
 		\toprule
 		Sr. No. & Initial point & Pareto optimal point \\ [0.5ex]
 		\midrule
 		1 & $0$ & $\left(0,1\right)^\top$ \\
 		2 & $0.25$ & $\left(0.25,0.75\right)^\top$ \\
 		3 & $0.5$ & $\left(0.5,0.5\right)^\top$ \\
 		4 &  $0.75$ & $\left(0.6,0.4\right)^\top$  \\
 		5 & $1$ & $\left(0.6,0.4\right)^\top$   \\[1ex]
 		\bottomrule

 	\end{tabular}
 	
 \end{table} 
\section{Conclusion and Future Directions}\label{Conclusion and future directions}
In this paper, we have studied the Newton method for an MIOP. We have proved the result related to the Newton direction at a non-Pareto critical point (Theorem \ref{descent direction finding theorem}). To find the step length, we have used the Armijo-like rule. Further, we have proved the result related to the existence of the step length (Theorem \ref{steplength theorem}). For computational purposes, we have provided the complete algorithm of the Newton method for an MIOP (Algorithm \ref{Algorithm}). We have shown that the iteration scheme of our proposed algorithm is scaling independent of variable (Proposition \ref{scaling of variable}). In convergence analysis, we have proved that every sequence generated by Algorithm \ref{Algorithm} converges to the Pareto critical point of the MIOP \eqref{minG(x)} (Theorem \ref{Convergence theorem}). We have shown the validation and the performance of the proposed Algorithm \ref{Algorithm} through some test problems. Finally, we have applied our proposed algorithm to a portfolio optimization problem.

In \cite{debnath2021charaterizations}, it is shown that a single-objective interval optimization problem is equivalent to a biobjective optimization problem. So, one may think the MIOP \eqref{minG(x)} is equivalent to a conventional real-valued multiobjective optimization problem with $2m$ objective functions. However, this approach is possible if all the IVM $G_i$ is convex, have an explicit form in terms of lower and upper boundary functions $\underline{G}_i$ and $\overline{G}_i$, respectively such that for all $x\in\left\{x\in{\mathbb{R}}^n:lb\leq x\leq ub\right\}$ and for all $i=1,2,\ldots,m$,  \[G_i\left(x\right):=\left[\underline{G}_i\left(x\right),\overline{G}_i\left(x\right)\right], \nabla_{gH}G_i\left(x\right):=\left[\nabla \underline{G}_i\left(x\right),\nabla \overline{G}_i\left(x\right)\right],\text{ and }\nabla_{gH}^2G_i\left(x\right):=\left[\nabla^2 \underline{G}_i\left(x\right),\nabla^2 \overline{G}_i\left(x\right)\right].\] 
Since $\underline{G}_i$ and $\overline{G}_i$ may interchange their position, it is very difficult to express the IVM $G_i$ by $G_i\left(x\right):=\left[\underline{G}_i\left(x\right),\overline{G}_i\left(x\right)\right]$ for all $x\in\left\{x\in{\mathbb{R}}^n:lb\leq x\leq ub\right\}$. In addition, the relations $\nabla_{gH}G_i\left(x\right):=\left[\nabla \underline{G}_i\left(x\right),\nabla \overline{G}_i\left(x\right)\right]$ and $\nabla_{gH}^2G_i\left(x\right):=\left[\nabla^2 \underline{G}_i\left(x\right),\nabla^2 \overline{G}_i\left(x\right)\right]$ are not true in general.

In real-valued multiobjective optimization problems, we find the exact form of the Newton direction for convex or strongly convex multiobjective functions as \[v(x):=-\left(\sum_{i=1}^m\lambda_i\nabla^2G_i(x)\right)^{-1}\sum_{i=1}^m\lambda_i \nabla G_i(x),\text{ where }\sum_{i=1}^m\lambda_i=1.\] However, in MIOPs, we are unable to find the exact form of the Newton direction $v(x)$. Due to this reason, we are unable to derive the rate of convergence of our proposed algorithm. We hope that superlinear and quadratic convergence rate of Algorithm \ref{Algorithm} can be proved under the reasonable assumptions. One may think to prove these rate of convergence results in future.

For further research, we will focus on quasi-Newton, conjugate gradient, and trust region methods for an MIOP.
 
\begin{appendices}

\section{List of Test Problems}\label{Test problems}
In this section, we provide a set of test problems for MIOPs refer to \cite{mondal2025steepest}. For each test problem, we provide the expression of the objective functions and the lower and upper bounds of the variables. Accordingly, the MIOP associated to the test problem is \[\min_{\substack{{lb\leq x\leq ub}\\ x\in U\subseteq{\mathbb{R}}^n}}\left(G_1\left(x\right),G_2\left(x\right),\ldots,G_m\left(x\right)\right)^\top.\]
\begin{description}
	\item[\normalfont{ Problem 1 (I-BK1).}\label{I-BK1}] Here, $m=2$, $n=2$, and 
	\begin{align*}
			&G_1(x_1,x_2):=\left[0.1,0.2\right]\odot x_1^2\oplus\left[0.1,0.3\right]\odot x_2^2,\\
			&G_2(x_1,x_2):=\left[0.1,0.3\right]\odot \left(x_1-5\right)^2\oplus\left[0.1,0.5\right]\odot \left(x_2-5\right)^2,\\
			& lb^\top=\left(-10,-10\right) \text{ and } ub^\top=\left(10,10\right).
	\end{align*}
	\item[{\normalfont Problem 2 (I-VU2).}\label{I-VU2}] Here, $m=2,n=2,$ and 
	\begin{align*}
			&G_1(x_1,x_2):=\left[1,1.5\right]\odot x_1\oplus\left[1,1.5\right]\odot x_2\oplus\left[1,1\right],\\
			&G_2(x_1,x_2):=\left[1,1.5\right]\odot x_1^2\oplus\left[2,3\right]\odot x_2^2\ominus_{gH}\left[1,1\right],\\
			& lb^\top=\left(-4,-4\right) \text{ and } ub^\top=\left(4,4\right).
	\end{align*}
	\item[{\normalfont Problem 3 (I-CH).}\label{I-CH}] Here, $m=2,n=2$, and 
	\begin{align*}
			&G_1(x_1,x_2):=[1,1]\odot\left(\left(x_1-1\right)^2+\left(x_2-2\right)^2\right)\oplus\left[-1,1\right],\\
			&G_2(x_1,x_2):=[2,3]\odot \left(x_1^2-x_2\right)\oplus\left[-2,2\right],\\
		& lb^\top=\left(-5,-4\right) \text{ and } ub^\top=\left(5,4\right).
	\end{align*}
	\item[{\normalfont Problem 4 (I-FON).}\label{I-FON}] Here, $m=2,n=2,$ and 
	\begin{align*}
			&G_1(x_1,x_2):=\left[1,1\right]\ominus_{gH}\left[1,3\right]\odot\exp\left(-\left(x_1-\sqrt{\tfrac{1}{2}}\right)^2-\left(x_2-\sqrt{\tfrac{1}{2}}\right)^2\right),\\
			&G_2(x_1,x_2):=\left[1,1\right]\ominus_{gH}\left[1,5\right]\odot\exp\left(-\left(x_1+\sqrt{\tfrac{1}{2}}\right)^2-\left(x_2+\sqrt{\tfrac{1}{2}}\right)^2\right),\\
	&lb^\top=\left(-2,-2\right) \text{ and } ub^\top=\left(2,2\right).
	\end{align*}
		\item[{\normalfont Problem 5 (I-KW2).}\label{I_KW2}] Here, $m=2,n=2$, and 
	\begin{align*}
			&G_1(x_1,x_2):=\left[-5,-3\right]\odot\left(1-x_1\right)^2\exp\left(-x_1^2-\left(x_2+1\right)^2\right)\oplus\left[10,10\right]\odot\left(\tfrac{1}{5}x_1-x_1^3-x_2^5\right)\exp\left(-x_1^2-x_2^2\right)\\
			&\hspace{2cm}\oplus\left[3,5\right]\odot\exp\left(-\left(x_1+2\right)^2-x_2^2\right)\ominus_{gH}\left[\tfrac{1}{2},\tfrac{1}{2}\right]\left(2x_1+x_2\right),\\
			&G_2(x_1,x_2):=\left[-5,-3\right]\odot\left(1+x_2\right)^2\odot\exp\left(-x_2^2-\left(1-x_2\right)^2\right) \\ 
			&\hspace{2cm}\oplus\left[10,10\right]\odot\left(-\tfrac{1}{5}x_2+x_2^3+x_1^5\right)\exp\left(-x_1^2-x_2^2\right) \oplus\left[3,5\right]\odot \exp\left(-\left(2-x_2\right)^2-x_1^2\right),\\
			&lb^\top=\left(-3,-1\right) \text{ and } ub^\top=\left(0,2\right).
	\end{align*}
	\item[{\normalfont Problem 6 (I-Far1).}\label{I-Far1}] Here, $m=2,n=2,$ and 
	\begin{align*}
			&G_1(x_1,x_2):=\left[-2,-1\right]\odot\exp\left(15\left(-\left(x_1-0.1\right)^2-x_2^2\right)\right)\\
			&\hspace{2cm}\oplus\left[-2,-1\right]\odot\exp\left(20\left(-\left(x_1-0.6\right)^2-\left(x_2-0.6\right)^2\right)\right)\\
			&\hspace{2cm}\oplus \left[1,3\right]\odot\exp\left(20\left(-\left(x_1+0.6\right)^2-\left(x_2-0.6\right)^2\right)\right)\\
			&\hspace{2cm}\oplus \left[1,2\right]\odot\exp\left(20\left(-\left(x_1-0.6\right)^2-\left(x_2+0.6\right)^2\right)\right)\\
			&\hspace{2cm}\oplus \left[1,2\right]\odot\exp\left(20\left(-\left(x_1+0.6\right)^2-\left(x_2+0.6\right)^2\right)\right),\\
			&G_2(x_1,x_2):=\left[2,4\right]\odot\exp\left(20\left(-x_1^2-x_2^2\right)\right)\\
			&\hspace{2cm}\oplus\left[1,2\right]\odot\exp\left(20\left(-\left(x_1-0.4\right)^2-\left(x_2-0.6\right)^2\right)\right)\\
			&\hspace{2cm}\oplus \left[-2,-1\right]\odot\exp\left(20\left(-\left(x_1+0.5\right)^2-\left(x_2-0.7\right)^2\right)\right)\\
			&\hspace{2cm}\oplus \left[-2,-1\right]\odot\exp\left(20\left(-\left(x_1-0.5\right)^2-\left(x_2+0.7\right)^2\right)\right)\\
			&\hspace{2cm}\oplus \left[1,5\right]\odot\exp\left(20\left(-\left(x_1+0.4\right)^2-\left(x_2+0.8\right)^2\right)\right),\\
		&lb^\top=\left(-1,-1\right) \text{ and } ub^\top=\left(1,1\right).
	\end{align*}
	\item[{\normalfont Problem 7 (I-Hil1).}\label{I-Hil1}] Here, $m=2, n=2,$ and 
	\begin{align*}
			&G_1(x_1,x_2):=\left[1,2\right]\odot\left(1+\tfrac{1}{2}\cos\left(2\pi x_1\right)\right)\cos\left(\tfrac{2\pi}{360}\left(45+40\sin\left(2\pi x_1\right)+25\sin\left(2\pi x_2\right)\right)\right),\\
			&G_2(x_1,x_2):=\left[1,3\right]\odot\left(1+\tfrac{1}{2}\cos\left(2\pi x_1\right)\right)\sin\left(\tfrac{2\pi}{360}\left(45+40\sin\left(2\pi x_1\right)+25\sin\left(2\pi x_2\right)\right)\right),\\
			&lb^\top=\left(-1,-1\right) \text{ and } ub^\top=\left(1,1\right).
	\end{align*}
		\item[{\normalfont Problem 8 (I-PNR).}\label{I-PNR}] Here, $m=2,n=2$, and 
	\begin{align*}
			&G_1(x_1,x_2):=\left[1,1.5\right]\odot\left(x_1^4+x_2^4\right)\oplus\left[1,2.6\right]\odot \left(x_1^2+x_2^2\right)\oplus \left[10,10\right]\odot x_1x_2\oplus \left[\tfrac{1}{4},\tfrac{1}{4}\right]\odot x_1\oplus\left[20,24\right],\\
			&G_2(x_1,x_2):=\left[1,2\right]\odot\left(x_1-1\right)^2\oplus \left[1,1.5\right]\odot x_2^2\oplus \left[0,2\right],\\
			&lb^\top=\left(-2,-2\right) \text{ and } ub^\top=\left(2,2\right).
	\end{align*}
	\item[{\normalfont Problem 9 (I-Deb).}\label{I-Deb}] Here, $m=2,n=2$, and
	\begin{align*}
			&G_1(x_1,x_2):=\left[1,2\right]\odot x_1,\\
			&G_2(x_1,x_2):=\tfrac{1}{x_1}\odot\left(\left[2,2\right]\ominus_{gH}\left[1,3\right]\odot \exp\left(-\left(\tfrac{x_2-0.2}{0.004}\right)^2\right)\ominus_{gH}\left[0.8,1.5\right]\odot \exp\left(-\left(\tfrac{x_2-0.6}{0.4}\right)^2\right)\right),\\
		&lb^\top=\left(1,-1\right) \text{ and } ub^\top=\left(3,1\right).
	\end{align*}
	\item[{\normalfont Problem 10 (I-SD).}\label{I-SD}] Here, $m=2,n=4$, and 
	\begin{align*}
			&G_1(x_1,x_2,x_3,x_4):=\left[2,3\right]\odot x_1\oplus \left[\sqrt{2},\sqrt{3}\right]\odot x_2\oplus \left[\sqrt{2},\sqrt{3}\right]\odot x_3\oplus \left[1,3\right]\odot x_4,\\
			&G_2(x_1,x_2,x_3,x_4):=\left[2,3\right]\odot\tfrac{1}{x_1}\oplus\left[2\sqrt{2},3\sqrt{3}\right]\odot\tfrac{1}{x_2}\oplus\left[2\sqrt{2},3\sqrt{3}\right]\odot\tfrac{1}{x_3}\oplus\left[2,3\right]\odot\tfrac{1}{x_4},\\
		&lb^\top=\left(1,\sqrt{2},\sqrt{2},1\right) \text{ and } ub^\top=\left(6,6,6,6\right).
	\end{align*}
	\item[{\normalfont Problem 11 (I-IKK1).}\label{I-IKK1}] Here, $m=3,n=2$, and 
	\begin{align*}
			&G_1(x_1,x_2):=\left[1,1\right]\odot x_1^2\oplus \left[0,1\right]\odot x_2^2,\\
			&G_2(x_1,x_2):=\left[1,1\right]\odot \left(x_1-20\right)^2\oplus \left[0,1\right]\odot \left(x_2-20\right)^2,\\
			&G_3(x_1,x_2):=\left[0,1\right]\odot x_1^2\oplus \left[1,1\right]\odot x_2^2,\\
			&lb^\top=\left(-50,-50\right) \text{ and } ub^\top=\left(50,50\right).
	\end{align*}
	\item[{\normalfont Problem 12 (I-VFM1).}\label{I-VFM1}] Here, $m=3,n=2$, and 
	\begin{align*}
			&G_1(x_1,x_2):=\left[1,2\right]\odot x_1^2\oplus \left[1,3\right]\odot \left(x_2-1\right)^2,\\
			&G_2(x_1,x_2):=\left[1,3\right]\odot x_1^2\oplus \left[1,2\right]\odot \left(x_2+1\right)^2\oplus\left[1,1\right],\\
			&G_3(x_1,x_2):=\left[1,2\right]\odot \left(x_1-1\right)^2\oplus \left[1,5\right]\odot x_2^2\oplus\left[2,2\right],\\
			&lb^\top=\left(-2,-2\right) \text{ and } ub^\top=\left(2,2\right).
	\end{align*}
	\item[{\normalfont Problem 13 (I-MHHM2).}\label{I-MHHM2}] Here, $m=3,n=2$, and 
	\begin{align*}
		&G_1(x_1,x_2):=\left[2,3\right]\odot \left(x_1-0.8\right)^2\oplus \left[1,2\right]\odot \left(x_2-0.6\right)^2,\\
		&G_2(x_1,x_2):=\left[1,2\right]\odot \left(x_1-0.85\right)^2\oplus \left[1,1.5\right]\odot \left(x_2-0.7\right)^2,\\
		&G_3(x_1,x_2):=\left[2,2.5\right]\odot \left(x_1-0.9\right)^2\oplus \left[1,1.2\right]\odot \left(x_2-0.6\right)^2,\\
		&lb^\top=\left(0,0\right) \text{ and } ub^\top=\left(1,1\right).
	\end{align*}
	\item[{\normalfont Problem 14 (I-Viennet).}\label{I-Viennet}] Here, $m=3,n=2$, and 
	\begin{align*}
		&G_1(x_1,x_2):=\left[0.5,1\right]\odot \left(x_1^2+x_2^2\right)\oplus \left[1,2\right]\odot \sin\left(x_1^2+x_2^2\right),\\
		&G_2(x_1,x_2):=\left[\tfrac{1}{8},\tfrac{1}{4}\right]\odot \left(3x_1-2x_2+4\right)^2\oplus\left[\tfrac{1}{27},\tfrac{1}{9}\right]\odot \left(x_1-x_2+1\right)^2\oplus\left[15,16\right],\\
		&G_3(x_1,x_2):=\left[\tfrac{1}{4},\tfrac{1}{2}\right]\odot\tfrac{1}{x_1^2+x_2^2+1}\ominus_{gH}\left[0.9,1.1\right]\odot \exp\left(-x_1^2-x_2^2\right),\\
		&lb^\top=\left(-3,-3\right) \text{ and } ub^\top=\left(3,3\right).
	\end{align*}
	\item[{\normalfont Problem 15 (I-AP1).}\label{I-AP1}] Here, $m=3,n=2$, and 
	\begin{align*}
		&G_1(x_1,x_2):=\left[\tfrac{1}{4},\tfrac{1}{2}\right]\odot \left(\left(x_1-1\right)^4+2\left(x_2-2\right)^4\right),\\
		&G_2(x_1,x_2):=\left[1,2\right]\odot\exp\left(\tfrac{x_1+x_2}{2}\right)\oplus\left[1,1.5\right]\odot\left(x_1^2+x_2^2\right),\\
		&G_3(x_1,x_2):=\left[\tfrac{1}{3},\tfrac{1}{2}\right]\odot\left(\exp\left(-x_1\right)+2\exp\left(-x_2\right)\right), \\
		&lb^\top=\left(-100,-100\right) \text{ and } ub^\top=\left(100,100\right).
	\end{align*}
		\item[{\normalfont Problem 16 (I-MOP7).}\label{I-MOP7}] Here, $m=3,n=2$, and 
	\begin{align*}
		&G_1(x_1,x_2):=\left[\tfrac{1}{4},\tfrac{1}{2}\right]\odot \left(x_1-2\right)^2\oplus \left[\tfrac{1}{26},\tfrac{1}{13}\right]\odot \left(x_2+1\right)^2\oplus\left[2,3\right] ,\\
		&G_2(x_1,x_2):=\left[\tfrac{1}{9},\tfrac{1}{4}\right]\odot \left(x_1+x_2-3\right)^2\oplus \left[\tfrac{1}{16},\tfrac{1}{8}\right]\odot \left(-x_1+x_2+2\right)^2\ominus_{gH}\left[17,20\right],\\
		&G_3(x_1,x_2):=\left[\tfrac{1}{25},\tfrac{1}{7}\right]\odot \left(x_1+2x_2-1\right)^2\oplus \left[\tfrac{1}{34},\tfrac{1}{17}\right]\odot \left(-x_1+2x_2\right)^2\ominus_{gH}\left[13,15\right], \\
		&lb^\top=\left(-400,-400\right) \text{ and } ub^\top=\left(400,400\right).
	\end{align*}
		\item[{\normalfont Problem 17 (I-VFM2).}\label{I-VFM2}] Here, $m=3,n=3$, and 
	\begin{align*}
			&G_1(x_1,x_2,x_3):=\left[0.1,0.2\right]\odot x_1^2\oplus \left[0.1,0.3\right]\odot x_2^2\oplus \left[0.1,0.2\right]\odot x_3^2,\\
			&G_2(x_1,x_2,x_3):=\left[0.1,0.3\right]\odot \left(x_1-5\right)^2\oplus \left[0.1,0.5\right]\odot \left(x_2-5\right)^2\oplus \left[0.1,0.4\right]\odot \left(x_3-5\right)^2,\\
			&G_3(x_1,x_2,x_3):=\left[0.1,0.2\right]\odot x_1^2\ominus_{gH} \left[0.1,0.3\right]\odot x_2^2 \oplus \left[0.1,0.2\right]\odot x_3^2, \\
			&lb^\top=\left(-5,-5,-5\right) \text{ and } ub^\top=\left(10,10,10\right).
	\end{align*}
	\item[{\normalfont Problem 18 (I-TR1).}\label{I-TR1}] Here, $m=3,n=3$, and 
	\begin{align*}
		&G_1(x_1,x_2,x_3):=\left[15,30\right]\ominus_{gH} \left[0.1,0.3\right]\odot \left(x_1^3+x_1^2\left(1+x_2+x_3\right)+x_2^3+x_3^3\right),\\
		&G_2(x_1,x_2,x_3):=\left[25,45\right]\ominus_{gH} \left[0.1,0.2\right]\odot \left(x_1^3+2x_2^3+x_2^2\left(2+x_1+x_3\right)+x_3^3\right),\\
		&G_3(x_1,x_2,x_3):=\left[30,60\right]\ominus_{gH} \left[0.1,0.3\right]\odot \left(x_1^3+x_2^3+3x_3^3+x_3^2\left(3+x_1+x_2\right)\right), \\
		&lb^\top=\left(1,1,1\right) \text{ and } ub^\top=\left(4,4,4\right).
	\end{align*}
	\item[{\normalfont Problem 19 (I-AP4).}\label{I-AP4}] Here, $m=3,n=3$, and 
	\begin{align*}
		&G_1(x_1,x_2,x_3):=\left[\tfrac{1}{9},\tfrac{1}{3}\right]\odot \left(\left(x_1-1\right)^4+2\left(x_2-2\right)^4+3\left(x_3-3\right)^4\right),\\
		&G_2(x_1,x_2,x_3):=\left[2,3\right]\odot\exp\left(\tfrac{x_1+x_2+x_3}{3}\right)\oplus\left[2,5\right]\odot\left(x_1^2+x_2^2+x_3^2\right),\\
		&G_3(x_1,x_2,x_3):=\left[\tfrac{1}{4},\tfrac{1}{3}\right]\odot\left(3\exp\left(-x_1\right)+4\exp\left(-x_2\right)+3\exp\left(-x_3\right)\right), \\
		&lb^\top=\left(-100,-100,-100\right) \text{ and } ub^\top=\left(100,100,100\right).
	\end{align*}
		\item[{\normalfont Problem 20 (I-Comet).}\label{I-Comet}] Here, $m=3,n=3$, and 
	\begin{align*}
		&G_1(x_1,x_2,x_3):=\left[1,1.5\right]\odot\left(1+x_3\right)\odot\left(x_1^3x_2^2-10x_1-4x_2\right),\\
		&G_2(x_1,x_2,x_3):=\left[1,1.5\right]\odot\left(1+x_3\right)\odot\left(x_1^3x_2^2-10x_1+4x_2\right),\\
		&G_3(x_1,x_2,x_3):=\left[0.2,1\right]\odot\left(1+x_3\right)\odot x_1^2, \\
		&lb^\top=\left(1,-2,0\right) \text{ and } ub^\top=\left(3.5,2,1\right).
	\end{align*}
\end{description}

\end{appendices}

\noindent {\bf Acknowledgments}  
T. Mondal and D. Ghosh  acknowledge the Core Research Grant (CRG/2022/001347) from Science and Engineering Research Board, India, to carry out this research work. D. S. Kim is thankful to the National Research Foundation of Korea (NRF) grant funded by the Korean government (MSIT) (RS-2025-19622979).  \\

\noindent {\bf Data Availability}\\ 
No data was used for the research described in the article. \\ 

\noindent {\bf Disclosure Statement} \\ 
The authors do not have any conflicts of interest.

\end{document}